\newtheorem{theorem}{Theorem}[section]
\newtheorem{lemma}[theorem]{Lemma}
\newtheorem{proposition}[theorem]{Proposition}
\newtheorem{corollary}[theorem]{Corollary}
\newtheorem{remark}[theorem]{Remark}
\newtheorem{example}[theorem]{Example}
\newcommand{\mc}[1]{{\mathcal #1}}
\newcommand{\mf}[1]{{\mathfrak #1}}
\newcommand{\mb}[1]{{\mathbf #1}}
\newcommand{\bb}[1]{{\mathbb #1}}
\newcommand{\ms}[1]{{\mathscr #1}}
\newcommand{\bs}[1]{{\boldsymbol #1}}
\newcounter{as}[section]
\newcommand{\<}{\langle}
\renewcommand{\>}{\rangle}
\renewcommand{\Cap}{{\rm cap}}
\title[Metastable Markov chains]{Metastable Markov chains: from the
  convergence of the trace to the convergence of the
  finite-dimensional distributions}
\author{C. Landim, M. Loulakis, M. Mourragui}
\address{\noindent IMPA, Estrada Dona Castorina 110, CEP 22460 Rio de
  Janeiro, Brasil and CNRS UMR 6085, Universit\'e de Rouen, Avenue de
  l'Universit\'e, BP.12, Technop\^ole du Madril\-let, F76801
  Saint-\'Etienne-du-Rouvray, France.}
\email{landim@impa.br}
\address{School of Applied Mathematical and Physical
  Sciences, National Technical University of Athens, 15780 Athens,
  Greece and Institute of Applied and Computational Mathematics, Foundation for Research and Technology- Hellas (FORTH), 70013 Heraklion Crete, Greece. }
\email{\tt loulakis@math.ntua.gr}
\address{
  \hfill\break\indent CNRS UMR 6085, Universit\'e de Rouen,
  \hfill\break\indent Avenue de l'Universit\'e, BP.12, Technop\^ole du
  Madril\-let, \hfill\break\indent
F76801 Saint-\'Etienne-du-Rouvray, France.}
\email{Mustapha.Mourragui@univ-rouen.fr}
\begin{document}

\begin{abstract}
  We consider continuous-time Markov chains which display a family of
  wells at the same depth. We provide sufficient conditions which
  entail the convergence of the finite-dimensional distributions of
  the order parameter to the ones of a finite state Markov chain. We
  also show that the state of the process can be represented as a
  time-dependent convex combination of metastable states, each of
  which is supported on one well. 
\end{abstract} 

\maketitle

\section{Introduction}
\label{sec01}

Several different methods to prove the metastable behavior of Markov
chains have been proposed in the last years \cite{ov05, BovHol, cn13,
  CNS2015, fmns15, biagau2016, fmnss16}. 

Inspired by the potential theoretic approach to metastability,
proposed by Bovier, Eckhoff, Gayrard and Klein in \cite{begk01,
  begk02}, Beltr\'an and Landim introduced a general method, known as
the martingale method, to derive the metastable behavior of a Markov
process \cite{bl2, bl4, bl9}. The reader will find in \cite{bl9} a
discussion on the similarities and differences between the martingale
approach, the pathwise approach, put forward in \cite{cgov} and
presented in \cite{ov05}, and the potential theoretic approach,
proposed in \cite{begk01, begk02} and reviewed in \cite{BovHol}.

To insert the main results of the article in their context, we
recall below the martingale method in the context of condensing
zero-range processes \cite{bl2012a, l2014, s2018}.  Denote by $\bb N$
the non-negative integers, $\bb N =\{0,1,2,...\}$, by $\bb T_L$, $L\ge
1$, the discrete, one-dimensional torus with $L$ points, and by $\eta$
the elements of $\bb N^{\bb T_L}$ called configurations. The total
number of particles at $x\in \bb T_L$ for a configuration $\eta \in
\bb N^{\bb T_L}$ is represented by $\eta_x$. Let $E_N$, $N\ge 1$, be
the set of configurations with $N$ particles:
\begin{equation}
\label{f02}
E_{N}\;:=\;\big\{ \eta\in\bb N^{\bb T_L} : \sum_{x\in \bb T_L} \eta_x
= N \big\}\;.
\end{equation}

Fix $\alpha>1$, and define $g:\bb N\to \bb R_+$ as
\begin{equation*}
g(0)=0\; , \quad g(1)=1  \quad
\textrm{and}\quad g(n)=\frac{a(n)}{a(n-1)}\;, \;\; n\ge 2\;,  
\end{equation*}
where $a(0)=1$, $a(n)=n^{\alpha}$, $n\ge 1$.  In this way,
$\prod_{i=1}^{n}g(i)=a(n)$, $n\ge 1$, and $\{ g(n) : n\ge 2\}$ is a
strictly decreasing sequence converging to $1$ as $n\uparrow\infty$.

Fix $1/2\le p\le 1$, and denote by $p(x)$ the transition probability
given by $p(1)=p$, $p(-1)=1-p$, $p(x)=0$, otherwise.  Let
$\sigma^{x,y}\eta$ be the configuration obtained from $\eta$ by moving
a particle from $x$ to $y$:
\begin{equation}
\label{f01}
(\sigma^{x,y}\eta)_z\;=\;\left\{
\begin{array}{ll}
\eta_x-1 & \textrm{for $z=x$} \\
\eta_y+1 & \textrm{for $z=y$} \\
\eta_z & \rm{otherwise}\;. \\
\end{array}
\right.  
\end{equation}

The nearest-neighbor, zero-range process associated to the jump rates
$\{g(k) : k\ge 0\}$ and the transition probability $p(x)$ is the
continuous-time, $E_N$-valued Markov process $\{\eta^N(t) : t\ge 0\}$
whose generator $L_N$ acts on functions $f: E_N\to\bb R$ as
\begin{equation*}
(L_N f) (\eta) \;=\; \sum_{\stackrel{x,y\in \bb T_L}{x\not = y}}
g(\eta_x) \, p(y-x) \, \big\{ f(\sigma^{x,y}\eta) - f(\eta) \big\} \;.
\end{equation*}
Hence, if there are $k$ particles at site $x$, at rate $p g(k)$, resp.
$(1-p) g(k)$, one of them jumps to the right, resp. left. Since $g(k)$
decreases to $1$ as $k\to\infty$, the more particles there are at some
site $x$ the slower they jump, but the rate remains bounded below by
$1$. 

This Markov process is irreducible. The stationary probability
measure, denoted by $\mu_N$, is given by
\begin{equation*}
\mu_N(\eta) \;=\; \frac {N^{\alpha}} {Z_{N}} \, 
\prod_{x\in \bb T_L} \frac{1}{ a(\eta_x)} \;,
\end{equation*}
where $Z_{N}$ is the normalizing constant.

Fix a sequence $\{\ell_N : N\ge 1\}$ such that $\ell_N \to \infty$,
$N/\ell_N \to \infty$, and let $\ms E^x_N$, $x\in \bb T_L$, be the set
of configurations in which all but $\ell_N$ particles sit at $x$:
\begin{equation*}
\ms E^x_N  \;:=\; \Big\{\eta\in E_N : \eta_x \ge N - \ell_N \Big\}\;.
\end{equation*}
According to equation (3.2) in \cite{bl2012a}, for each $x\in \bb
T_L$, $\mu_N(\ms E^x_N) \to 1/L$ as $N\uparrow\infty$.

By the ergodic theorem, the process stays most of the time in the set
$\sqcup_{x\in\bb T_L} \ms E^x_N$. Since these sets are far apart, one
expects the sets $\ms E^x_N$ to behave as wells of the dynamics: the
process remains for a very long time in each of the sets $\ms E^x_N$
at the end of which it performs a quick transition to another set $\ms
E^y_N$.

If the process evolves as described in the previous paragraph, it is
reasonable to call depth of the well $\ms E^x_N$ the average time the
process remains in $\ms E^x_N$ before hitting another well. The
symmetry of the model implies that in the zero-range process
introduced above all wells have the same depth. This is an important
difference between this dynamics and the previous ones in which a
metastable behavior has been observed. In the latter ones,
cf. \cite{ov05, BovHol}, the models feature one shallow and one deep
well and the problem consists in describing the transition from the
shallow well to the deep one, or in estimating the mean value of the
transition time. In contrast, in the zero-range process, the presence
of many wells of the same depth transforms the problem in the
characterization of the evolution of the process among the wells.

Beltr\'an and Landim proposed in \cite{bl2, bl4} a mathematical
formulation of this phenomenon which we present below in the context
of a sequence of Markov chains, each of which takes values in a finite
set.

Consider a sequence of finite sets $(E_N : N\ge 1)$ whose cardinality
tends to infinity with $N$. The elements of $E_N$ are called
configurations and are denoted by the Greek letters $\eta$, $\xi$,
$\zeta$. Let $\{\eta^N(t) : t\ge 0\}$ be a continuous-time,
$E_N$-valued, irreducible Markov chain.

\smallskip\noindent{\bf The wells.}  Consider a partition $\ms E^1_N,
\dots, \ms E^{\mf n}_N$, $\Delta_N$, $\mf n\ge 2$, of the set $E_N$,
and let
\begin{equation}
\label{17}
\ms E_N \;=\; \ms E^1_N \sqcup  \cdots \sqcup \ms E^{\mf n}_N
\;, \quad  \breve{\ms E}^x_N \;=\; \bigsqcup_{y\not = x} \ms E^y_N\;.
\end{equation}
Here and below we use the notation $\ms A\sqcup \ms B$ to represent
the union of two disjoint sets $\ms A$, $\ms B$: $\ms A\sqcup \ms B =
\ms A\cup \ms B$, and $\ms A\cap \ms B=\varnothing$. As in the example
above, the sets $\ms E^x_N$ have to be understood as the wells of the
dynamics, the sets where the process remains most of the time, and
$\Delta_N$ as the set which separates the wells.

\smallskip\noindent{\bf The time scale.} Let $(\theta_N: N\ge 1)$ be
the time-scale at which one observes a transition from a well $\ms
E^x_N$ to the set $\breve{\ms E}^x_N$ which consists of the union of
all the other wells. This time-scale has to be determined in each
model. As it can be expressed in terms of capacities (cf. Lemma 6.8 in
\cite{bl2}), its derivation corresponds to the calculation of the
capacity between $\ms E^x_N$ and $\breve{\ms E}^x_N$.

Denote by $\xi^N(t)$ the process $\eta^N(t)$ speeded-up by $\theta_N$:
$\xi^N(t) = \eta^N(t\theta_N)$. Note that the transitions between
wells occur in time-intervals of order $1$ for the process $\xi^N(t)$.
This is the reason for changing the time scale and introducing
$\xi^N(t)$.

\smallskip\noindent{\bf Model reduction.} We expect the process to
remain for a very long time in each well, a time much longer than the time
it needs to equilibrate inside the well. If this description is
correct, the hitting time of a new well should be asymptotically
Markovian due to the loss of memory entailed by the equilibration.

Let $\Phi_N: E_N \to \{0,1, \dots, \mf n\}$, $\Psi_N: \ms E_N \to \{1,
\dots, \mf n\}$ be the projections defined by
\begin{equation*}
\Phi_N (\eta) \;=\; \sum_{x=1}^{\mf n} x \, \mb 1\{\eta
\in \ms E^x_N\} \;, \qquad
\Psi_N (\eta) \;=\; \sum_{x=1}^{\mf n} x \, \mb 1\{\eta
\in \ms E^x_N\} \;.
\end{equation*}
Note that $\Phi_N (\eta) =0$ for $\eta\in \Delta_N$, while $\Psi_N$ is
not defined on the set $\Delta_N$. In general, $\Phi_N(\xi^N(t))$ is
not a Markov chain, but only a hidden Markov chain.  As the
cardinality of $E_N$ increases to $\infty$ with $N$,
$\Phi_N(\xi^N(t))$ takes values in a much smaller state space than
$\xi^N(t)$. For this reason it is called the reduced chain.

The argument laid down above on equilibration and loss of memory
suggests that $\Phi_N(\xi^N(t))$ converges to a Markov chain taking
values in $\{0,1, \dots, \mf n\}$. However, the brief sojourns at
$\Delta_N$ create an obstacle to the convergence. Starting from the
well $\ms E^x_N$, the process $\xi^N(t)$ makes many unsuccessful
attempts before hitting a new well $\ms E^y_N$. These attempts
correspond to brief visits to $\Delta_N$. A typical path of
$\Phi_N(\xi^N(t))$ is illustrated in Figure \ref{fig2}. These short
sojourns at $\Delta_N$, which disappear in the limit, prevent the
convergence (in the usual Skorohod topology) of the process
$\Phi_N(\xi^N(t))$ to a $\{1, \dots, \mf n\}$-valued Markov chain.

To overcome this difficulty, we perform a small surgery in the
trajectories by removing from them the pieces of the paths in
$\Delta_N$. This is done by considering the trace of the process
$\xi^N(t)$ on $\ms E_N$.

\begin{figure}
  \centering
\begin{tikzpicture}[scale = .5]
\draw[->] (-.2, 0) -- (20,0) node[right] {$t$};
\draw[->] (0,-.2) -- (0,5);
\foreach \y in {0, ..., 3}
\draw (-.1,1+\y) -- (0,1+\y) node[left] {$\y$};
\draw (0, 2) -- (8,2);
\fill[black!100] (8,1) -- (8.2,1) -- (8.2,2) -- (8,2) -- (8,1);
\draw (8.2, 2) -- (14,2);
\fill[black!100] (14,1) -- (14.2,1) -- (14.2,2) -- (14,2) -- (14,1);
\draw (14.2,2) --(14.2,4);
\draw (14.2, 4) -- (20,4);
\end{tikzpicture}
\caption{A typical trajectory of $\Phi_N(\xi^N(t))$. The black
  rectangles represent jumps from $1$ to $0$ and from $0$ to $1$ in
  short time intervals when the process reaches the boundary of the
  well $\ms E^1_N$.}
\label{fig2}
\end{figure}
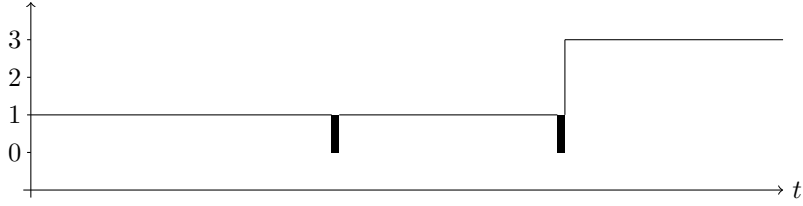

\smallskip\noindent{\bf Trace process.}  Fix a proper subset $\ms A$
of $E_N$. The trace of the process $\xi^N(t)$ on the set $\ms A$,
denoted by $\xi^{\ms A}(t)$, is the process obtained from $\xi^N(t)$
by stopping its evolution when it leaves the set $\ms A$ and by
restarting it when it returns to the set $\ms A$. More precisely,
denote by $T_{\ms A}(t)$ the total time spent at $\ms A$ before time
$t$:
\begin{equation*}
T_{\ms A}(t) \; =\; \int_0^t \mb 1\{\xi^N(s) \in \ms A\}\, ds\;,
\end{equation*}
where $\mb 1\{B\}$ represents the indicator of the set $B$. Note that
the function $T_{\ms A}$ is piecewise differentiable and that its
derivative takes only the values $1$ and $0$. It is equal to $1$ when
the process is in $\ms A$ and it is equal to $0$ when it is not. Let
$S_{\ms A}(t)$ be the generalized inverse of $T_{\ms A}(t)$:
\begin{equation*}
S_{\ms A}(t) = \sup \{s\ge 0: T_{\ms A}(s) \le t\}\;.
\end{equation*}
The trace process is defined as $\xi^{\ms A}(t) = \xi^N(S_{\ms
  A}(t))$.  It is shown in \cite[Proposition 6.1]{bl2} that if
$\xi^N(t)$ is a continuous-time, irreducible Markov chain, then
$\xi^{\ms A}(t)$ is a continuous-time, $\ms A$-valued, irreducible
Markov chain whose jump rates can be expressed in terms of the
probabilities of hitting times of the original chain.

Denote by $\xi^{\ms E_N}(t)$ the trace of the process $\xi^N(t)$ on
$\ms E_N$. By the previous paragraph, $\xi^{\ms E_N}(t)$ is an $ \ms
E_N$-valued Markov process. If the time spent on $\Delta_N$ is
negligible, we only removed from the original trajectory the short
sojourns in $\Delta_N$.

\smallskip\noindent{\bf Metastability.}  Denote by $X_N(t)$,
$X^T_N(t)$ the hidden Markov chains given by $X_N(t) =
\Phi_N(\xi^N(t))$, $X^T_N(t) = \Psi_N(\xi^{\ms E_N}(t))$,
respectively. Note that $X_N(t)$ takes values in $\{0, 1, \dots, \mf
n\}$, while $X^T_N(t)$ takes values on the set $S:=\{1, \dots, \mf
n\}$.  Moreover, $X^T_N(t)$ is the trace on the set $S$ of the process
$X_N(t)$.

Let $D(\bb R_+, E_N)$ be the space of right-continuous functions
$\omega: \bb R_+ \to E_N$ with left-limits endowed with the Skorohod
topology. Let $\bb P_\eta=\bb P^N_\eta$, $\eta\in E_N$, be the
probability measure on the path space $D(\bb R_+, E_N)$ induced by the
Markov chain $\xi^N(t)$ starting from $\eta$. Expectation with respect
to $\bb P_\eta$ is represented by $\bb E_\eta$.

In \cite{bl2, bl4, bl9}, a set of conditions have been introduced
which yield that

\begin{enumerate}
\item[({\bf H1})] The dynamics $X^T_N(t) = \Psi_N(\xi^{\ms E_N}(t))$
  is asymptotically Markovian: For all $x\in S$, and sequences
  $\eta^N\in\ms E^x_N$, under the measure $\bb P_{\eta^N}$ the
  process $X^T_N(t)$ converges in the Skorohod topology to a Markov
  chain denoted by $\bs X(t)$;
\item[({\bf H2})] The time spent in $\Delta_N$ is negligible: For all $t>0$
\begin{equation*}
\lim_{N\to\infty}
\max_{\eta\in\ms E_N} \bb E_{\eta} \Big[
\int_0^t \mb 1\{X_N(s) =0 \}\, ds\Big] \;=\; 0\;. 
\end{equation*}
\end{enumerate}

The first condition asserts that the trace on $S$ of the process
$X_N(t)$ converges to a Markov chain, while the second one states that
the amount of time the process $X_N(t)$ spends outside $S$ vanishes as
$N\uparrow\infty$, uniformly over initial configurations in $\ms E_N$.

The second condition can be restated as
\begin{equation}
\label{01}
\lim_{N\to\infty}
\max_{\eta\in\ms E_N} \bb E_{\eta} \Big[
\int_0^t \mb 1\{\xi^N(s) \in \Delta_N\}\, ds\Big] \;=\; 0\;. 
\end{equation}

\smallskip\noindent{\bf Soft topology.} It is clear that the
convergence of the process $X_N(t)$ to $\bs X(t)$ in the Skorohod
topology does not follow from conditions (H1) and (H2). Consider, for
example, a continuous-time, $S$-valued Markov chain $Y(t)$, and
a sequence $\delta_N>0$, $\delta_N\downarrow 0$. Fix $t_0>0$, and define
the process $Y_N(t)$ by $Y_N(t)=Y(t) \mb 1\{t\not \in [t_0-\delta_N,
t_0+\delta_N)\}$. The sequence of processes $Y_N(t)$ fulfills
properties (H1) and (H2), but $Y_N(t)$ does not converge to $Y(t)$ in
the Skorohod topology. Actually, not even the $1$-dimensional
distributions converge.

This example is artificial, but in almost all models in which a
metastable behavior has been observed (cf. the examples of Section
\ref{sec02}), as mentioned in the subsection Model Reduction, due to
the many and very short sojourns of $X_N(t)$ in $0$, the process
$X_N(t)$ can not converge in any of the Skorohod topologies to $\bs
X(t)$. To overcome this obstacle a weaker topology has been proposed
in \cite{l1}, called the soft topology, in which the convergence takes
place.

The soft topology is, however, quite weak. For instance, the function
which associates to a trajectory $\omega \in D([0,T], S\cup \{0\})$
the value $\sup_{0\le t\le T} |\omega(t)|$ is not continuous.  For
this reason, we put forward in this article an alternative definition
of metastability. We propose to declare that the sequence of Markov
chains $\eta^N(t)$ is metastable in the time-scale $\theta_N$ if the
finite-dimensional distributions of $X_N(t) = \Phi_N(\xi^N(t))$
converge to the ones of $\bs X(t)$. Moreover, we show that the
conditions (H1), (H2) together with an extra condition on the visits
to the set $\Delta_N$, stated below in equation \eqref{03}, entail the
metastability of the Markov chains $\eta^N(t)$ in the FDD sense. This
latter result, stated in Proposition \ref{l01} below, is the main
contribution of this article.

We also show, in Proposition \ref{p02}, that conditions (H1), (H2)
together with slightly stronger assumptions entail the convergence of
the state of the process to a time-dependent convex combination of
metastable states.

\section{Notation and Results}
\label{sec-2}

We present in this section the main results of the article. We adopt
the notation introduced in the previous section: $\eta^N(t)$ is an
$E_N$-valued, irreducible Markov chain, whose state space can be
decomposed as in \eqref{17}.

\smallskip\noindent{\bf Convergence of the finite-dimensional
  distributions.} The main result of the article reads as follows:

\begin{proposition}
\label{l01}
Beyond (H1) and (H2), suppose that for all $x\in S$,
\begin{equation}
\label{03}
\lim_{\delta\to 0} \limsup_{N\to\infty} \max_{\eta\in\ms E^x_N} 
\sup_{2\delta\le s\le 3\delta} \bb P_{\eta} [ \xi^N(s) \in \Delta_N ]\;=\;0\;. 
\end{equation}
Then, for all $x\in S$, and all sequences $\{\eta^N : N\ge 1\}$,
$\eta^N\in\ms E^x_N$, under $\bb P_{\eta^N}$ the finite-dimensional
distributions of $X_N(t)$ converge to the finite-dimensional
distributions of the chain $\bs X(t)$.
\end{proposition}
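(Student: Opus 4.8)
The plan is to transfer the convergence of the trace chain, guaranteed by (H1), to the full chain $X_N(t)$ by means of the random time change that links them. Whenever $\xi^N(t)\in\ms E_N$ one has $\xi^{\ms E_N}(T_{\ms E_N}(t))=\xi^N(t)$, and since $\Psi_N$ and $\Phi_N$ agree on $\ms E_N$, this yields the pathwise identity
\begin{equation*}
X_N(t)\;=\;X^T_N\big(T_{\ms E_N}(t)\big)\qquad\text{on the event }\{\xi^N(t)\in\ms E_N\}\;.
\end{equation*}
Fix $0<t_1<\dots<t_k$ and $y_1,\dots,y_k\in S$. Since each $y_j$ is nonzero, the event $\{X_N(t_j)=y_j\ \forall j\}$ is contained in $A_N=\{\xi^N(t_j)\in\ms E_N\ \forall j\}$, and on $A_N$ the identity above lets us replace $X_N(t_j)$ by $X^T_N(T_{\ms E_N}(t_j))$. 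Hence
\begin{equation*}
\Big|\bb P_{\eta^N}\big[X_N(t_j)=y_j\ \forall j\big]-\bb P_{\eta^N}\big[X^T_N(T_{\ms E_N}(t_j))=y_j\ \forall j\big]\Big|\;\le\;\bb P_{\eta^N}[A_N^c]\;.
\end{equation*}
The proof thus splits into two tasks: showing $\bb P_{\eta^N}[A_N^c]\to0$, and identifying the limit of the joint law of the time-changed trace chain.

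I expect the first task to be the main obstacle, since it controls the occupation of $\Delta_N$ at a fixed time, whereas (H2) controls only its time average and \eqref{03} only short times. By a union bound it suffices to prove $\lim_N\bb P_{\eta^N}[\xi^N(t)\in\Delta_N]=0$ for each fixed $t>0$. I would fix $\delta\in(0,t/3)$ and, for $s\in[2\delta,3\delta]$, apply the Markov property at time $t-s$ together with the decomposition of $E_N$ into $\ms E_N$ and $\Delta_N$ (bounding the conditional probability by $1$ on $\{\xi^N(t-s)\in\Delta_N\}$) to obtain
\begin{equation*}
\bb P_{\eta^N}[\xi^N(t)\in\Delta_N]\;\le\;\bb P_{\eta^N}[\xi^N(t-s)\in\Delta_N]\;+\;\max_{\zeta\in\ms E_N}\bb P_{\zeta}[\xi^N(s)\in\Delta_N]\;.
\end{equation*}
The crucial step is to average this over $s\in[2\delta,3\delta]$: the first term becomes $\delta^{-1}\int_{t-3\delta}^{t-2\delta}\bb P_{\eta^N}[\xi^N(u)\in\Delta_N]\,du\le\delta^{-1}\bb E_{\eta^N}[\int_0^t\mb 1\{\xi^N(u)\in\Delta_N\}\,du]$, which vanishes as $N\to\infty$ for each fixed $\delta$ by (H2), while the second is at most $\sup_{2\delta\le s\le3\delta}\max_{\zeta\in\ms E_N}\bb P_\zeta[\xi^N(s)\in\Delta_N]$, whose $\limsup_N$ tends to $0$ as $\delta\to0$ by \eqref{03}. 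Taking $\limsup_N$ and then $\delta\to0$ gives $\limsup_N\bb P_{\eta^N}[\xi^N(t)\in\Delta_N]=0$.

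For the second task I would show that $(X^T_N(T_{\ms E_N}(t_1)),\dots,X^T_N(T_{\ms E_N}(t_k)))$ converges in distribution to $(\bs X(t_1),\dots,\bs X(t_k))$. Since $T_{\ms E_N}(t)=t-\int_0^t\mb 1\{\xi^N(s)\in\Delta_N\}\,ds$, condition (H2) gives $T_{\ms E_N}(t_j)\to t_j$ in $\bb P_{\eta^N}$-probability for each $j$. Convergence of the random clocks to the deterministic values $t_j$, combined with the Skorohod convergence $X^T_N\Rightarrow\bs X$ from (H1), yields the joint convergence $(X^T_N,T_{\ms E_N}(t_1),\dots,T_{\ms E_N}(t_k))\Rightarrow(\bs X,t_1,\dots,t_k)$. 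As $\bs X$ is a finite-state Markov chain it is almost surely continuous at each deterministic $t_j$, so the evaluation map $(\omega,u_1,\dots,u_k)\mapsto(\omega(u_1),\dots,\omega(u_k))$ is almost surely continuous at the limit point, and the continuous mapping theorem delivers the claimed convergence. Inserting this together with $\bb P_{\eta^N}[A_N^c]\to0$ into the inequality above, and using that the limit law is supported on $S$ so that terms containing the value $0$ do not contribute, completes the proof. The case $t_1=0$ is immediate, since $X_N(0)=x=\bs X(0)$ whenever $\eta^N\in\ms E^x_N$.
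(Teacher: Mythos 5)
Your proposal is correct, but it takes a genuinely different route from the paper's. The paper's proof rests on Lemma \ref{a01}, a one-sided comparison $\bb P_\eta[X^T_N(r-3\delta)=y]\le \bb P_\eta[X_N(r)=y]+o(1)$ obtained by applying the strong Markov property at the first time $H$ the reduced chain hits $y$ after $r-3\delta$ (so that $r-H\in[2\delta,3\delta]$, where \eqref{03} and (H1) control the error); this yields only $\liminf_N \bb P_{\eta^N}[X_N(r)=y]\ge \bs P_x[\bs X(r)=y]$, which is upgraded to equality by summing over $y\in S$ and invoking conservation of mass, and the multi-time case is declared immediate. You instead prove the strictly stronger fixed-time estimate $\bb P_{\eta^N}[\xi^N(t)\in\Delta_N]\to 0$ --- a statement the paper only recovers \emph{a posteriori} from Proposition \ref{l01} via Corollary \ref{l07}, where it is used in Section \ref{sec03} --- by a clean averaging device: the Markov property at time $t-s$ gives $\bb P_{\eta^N}[\xi^N(t)\in\Delta_N]\le \bb P_{\eta^N}[\xi^N(t-s)\in\Delta_N]+\max_{\zeta\in\ms E_N}\bb P_\zeta[\xi^N(s)\in\Delta_N]$, and averaging over $s\in[2\delta,3\delta]$ lets (H2) kill the first term (it becomes $\delta^{-1}$ times the expected occupation of $\Delta_N$) while \eqref{03} kills the second (the maximum over $\ms E_N$ is the maximum over the finitely many wells, so the uniformity you need does follow from \eqref{03}). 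With this in hand, the pathwise identity $X_N(t)=X^T_N(T_{\ms E_N}(t))$ on $\{\xi^N(t)\in\ms E_N\}$ (valid since $T_{\ms E_N}$ is strictly increasing immediately after such a $t$, so $S_{\ms E_N}(T_{\ms E_N}(t))=t$), the convergence $T_{\ms E_N}(t_j)\to t_j$ in probability from (H2) and Markov's inequality, the converging-together lemma, and continuity of the evaluation map $(\omega,u_1,\dots,u_k)\mapsto(\omega(u_1),\dots,\omega(u_k))$ at almost-sure continuity points of $\bs X$ (the paper itself records $\bs P_x[\bs X(t)\ne\bs X(t-)]=0$) deliver \emph{all} finite-dimensional distributions in one stroke, with two-sided control, so you never need the mass-conservation trick. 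In exchange, the paper's argument is more elementary --- it uses nothing beyond the weak convergence already packaged in (H1), no random time-change or continuous-mapping machinery --- and is shorter for the one-dimensional marginal; your argument is more self-contained on the multi-time extension (which the paper leaves implicit) and produces the fixed-time negligibility of $\Delta_N$ as a useful by-product rather than a corollary.
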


The proof of this result is presented in Section \ref{sec2}, together
with several, easier to verify, sufficient conditions for \eqref{03} to hold.

\smallskip\noindent{\bf Slow variables.} In all models where
metastability has been proved the time-scale $\theta_N$ increases to
$\infty$ with $N$. Since it follows from the previous paragraphs that
the finite-dimensional distributions of $\Phi_N(\xi^N(t))$ converge to
the ones of the Markov chain $\bs X(t)$, we say that $\Phi_N$ is a
\emph{slow variable}.  In this sense, metastability consists in
discovering the slow variables of the system and in deriving their
asymptotic dynamics.

\smallskip\noindent{\bf Convergence of the states.}  We have coined
properties (H1) and (H2) as the metastable behavior of the Markov
chain $\eta^N(t)$ in the time-scale $\theta_N$. However, it has been
pointed out that in mathematical-physics metastability means the
convergence of the state of the process. The second result of this
note fills the gap between these two concepts by establishing that
properties (H1), (H2) together with conditions (M1), (M2) below lead
to the convergence of the state of the process to a convex combination
of states supported on the wells $\ms E^x_N$. The precise statement of
this result requires some notation.

Recall that we denote by $\xi^N(t)$ the Markov chain $\eta^N(t)$
speeded-up by $\theta_N$: $\xi^N(t) = \eta^N(t\theta_N)$.  Denote by
$\mu_N$ the unique stationary state of the chain $\xi^N(t)$, and by
$\mu^y_N$, $y\in S$, the probability measure $\mu_N$ conditioned to
$\ms E^y_N$:
\begin{equation}
\label{203}
\mu^y_N(\xi) \;=\; \frac{\mu_N(\xi)}{\mu_N(\ms E^y_N)}\, 
\mb 1 \{\xi\in \ms E^y_N\}\;, \quad \xi\in E_N \;.
\end{equation}
Note that $\mu^y_N$ is defined on $E_N$ and it is supported on $\ms
E^y_N$.

\smallskip\noindent{\bf Reflected process.} For $x\in S$, denote by
$\{\xi^N_{R,x}(t) : t\ge 0\}$ the Markov chain $\xi^N(t)$ reflected at
$\ms E^x_N$. This is the Markov chain obtained from $\xi^N(t)$ by
forbidding jumps from $\ms E^x_N$ to its complement $(\ms
E^x_N)^c$. This mechanism produces a new Markov chain whose state
space is $\ms E^x_N$, which might be reducible.

We assume that for each $x\in S$ the reflected chain $\xi^N_{R,x}(t)$
is irreducible and that $\mu^x_N$ is its unique stationary state. In
the reversible case this latter assumption follows from the
irreducibility. In the non-reversible case, if the Markov chain
$\eta^N(t)$ is a cycle chain (cf. \cite{Gab12, lseo2016a}) it is easy
to define the sets $\ms E^x_N$ for the reflected chain on $\ms E^x_N$
to be irreducible.  Let $(\mathcal{S}^{R,x}_N(t) : t\ge 0)$, be the semigroup of
the Markov chain $\xi^N_{R,x}(t)$.

\smallskip\noindent{\bf Trace process.}  Similarly, we denote by
$\xi^N_{T,x}(t)$ the trace on $\ms E_N^x$ of the process $\xi^N(t)$,
and by $(\mathcal{S}^{T,x}_N(t) : t\ge 0)$ the semigroup of the Markov chain
$\xi^N_{T,x}(t)$. 

\smallskip\noindent{\bf Mixing times.}  Denote by $\Vert \mu - \nu
\Vert_{\rm TV}$ the total variation distance between two probability measures
defined on the same denumerable set $\Omega$:
\begin{equation*}
\Vert \mu - \nu \Vert_{\rm TV} \;=\; \frac 12\, \sum_{\eta\in \Omega}
\vert \mu (\eta) - \nu (\eta) \vert \;=\; \sum_{\eta\in \Omega}
\big(\mu (\eta) - \nu (\eta)\big)^+\;  , 
\end{equation*}
where $x^+=\max\{x,0\}$ denotes the positive part of $x\in\bb
R$. Hereafter, the set $\Omega$ will be either the set $E_N$ or one of
the wells $\ms E^x_N$, $x\in S$.

Denote by $T^{N,R,x}_{\rm mix}$, $T^{N,T,x}_{\rm mix}$ the
$(\frac{1}{2e})$-mixing time of the reflected, trace processes,
respectively:
\begin{equation}
\label{204}
\begin{gathered}
T^{N,R,x}_{\rm mix} \;=\; \inf \Big\{ t> 0 : \max_{\eta\in \ms E^x_N} 
\Vert \delta_\eta \mathcal{S}^{R,x}_N(t) - \mu^x_N \Vert_{\rm TV} \le \frac
1{2e} \Big\}\;, \\
T^{N,T,x}_{\rm mix} \;=\;  \inf \Big\{ t> 0 : \max_{\eta\in \ms E^x_N} 
\Vert \delta_\eta \mathcal{S}^{T,x}_N(t) - \mu^x_N \Vert_{\rm TV} \le \frac
1{2e} \Big\}\;,
\end{gathered}  
\end{equation}
where $\delta_\eta$ stands for the Dirac measure concentrated on the
configuration $\eta$.

\smallskip\noindent{\bf Hitting times.}  For a subset $\ms A$ of
$E_N$, denote by $H_{\ms A}$, $H^+_{\ms A}$ the hitting time and the
time of the first return to ${\ms A}$:
\begin{equation}
\label{201}
H_{\ms A} \;=\; \inf \big \{t>0 : \xi^N(t) \in {\ms A} \big\}\;,
\quad
H^+_{\ms A} \;=\; \inf \big \{t>\tau_1 : \xi^N(t) \in {\ms A} \big\}\; ,  
\end{equation}
where $\tau_1$ represents the time of the first jump of the chain
$\xi^N(t)$: $\tau_1 = \inf\{t>0 : \xi^N(t) \not = \xi^N(0)\}$. 

For two subsets $\ms A \subset \ms B \subset E_N$, denote by
$H^{\ms B}_{\ms A}$ the hitting time on the set $\ms A$ of the trace
process on $\ms B$:
\begin{equation}
\label{202}
H^{\ms B}_{\ms A} \;=\; \int_0^{H_{\ms A}} \mb 1\{\xi^N(s) \in \ms B\}
\, ds\;.
\end{equation}

Let $(\alpha_N : N\ge 1)$, $(\beta_N : N\ge 1)$ be two sequences of
positive numbers. The relation $\alpha_N \ll \beta_N$ means that
$\lim_{N\to\infty} \alpha_N/\beta_N = 0$. In the next result, we
assume that for each $x\in S$ there exists a set $\ms B_N^x\subset \ms
E_N^x$ fulfilling the following conditions:

\begin{enumerate}
\item[({\bf M1})] For every $\delta>0$ we have
\begin{equation}
\label{b02}
\lim_{N\to\infty} \max_{x\in S} \, \sup_{\eta\in \ms E^x_N} 
\bb P_\eta \big[ H_{\ms B_N^x}^{\ms E_N^x}>\delta\big] \;=\; 0\;.
\end{equation}

\item[({\bf M2})] There exists a time-scale $(\varepsilon_N : N\ge 1)$
  such that $\varepsilon_N\ll 1$,
\begin{equation}
\label{b04}
\lim_{N\to\infty} \max_{x\in S} 
\,\sup_{\eta\in\ms B_N^x}
\bb P_\eta \big[ H_{\Delta_N} \le 2\, \varepsilon_N \big] \;=\; 0\;.
\end{equation}
and
\begin{equation}
\label{b07}
\lim_{N\to\infty} \max_{x\in S} \sup_{\eta\in\ms B_N^x}
\Vert \delta_{\eta} \mathcal{S}^{R,y}_N(\varepsilon_N) - \mu^y_N \Vert_{\rm TV} 
\;=\; 0\;.
\end{equation}
\end{enumerate}

Condition (M1) requires that the process restricted in $\ms E^x_N$ reaches
the set $\ms B^x_N$ quickly. Additionally, condition (M2) imposes that
it takes longer to leave the set $\ms E^x_N$, when starting from $\ms
B^x_N$, than it takes to mix in $\ms E^x_N$. Slightly more precisely,
condition (M2) requests the existence of a time scale $\varepsilon_N$,
longer than the mixing time of the reflected process and shorter than
the exit time from the set $\ms E^x_N$. Note, however, that in
condition \eqref{b07} the initial configuration belongs to the set
$\ms B^x_N$, while in the definition of the mixing time the initial
configuration may be any element of the set $\ms E^x_N$. In any case,
condition \eqref{b07} is in force if $\varepsilon_N \gg T^{N,R,x}_{\rm
  mix}$.

Assume that the chain is reversible. Fix $y\in S$, denote by
$p^{R,y}_t(\zeta,\xi)$ the transition probabilities of the reflected
process $\xi^N_{R,y}(t)$, and fix $\eta\in \ms B^y_N$.  By definition,
\begin{equation*}
\Vert \delta_{\eta} \mathcal{S}^{R,y}_N(\varepsilon_N) - \mu^y_N 
\Vert_{\rm  TV}  \;=\; \frac 12\,
\sum_{\zeta\in \ms E^y_N} |\, f_t (\zeta) - 1\,| \, \mu^y_N (\zeta)\;,
\end{equation*}
where $f_t(\zeta) = p^{R,y}_t(\eta, \zeta)/\mu^y_N (\zeta)$ and $t=
\varepsilon_N$. By Schwarz inequality and a decomposition of $f_t$
along the eigenfunctions of the generator of the reflected process
(cf. equation (12.5) in \cite{lpw}), the square of the previous
expression is bounded by $\exp\{ -2\lambda_{R,y} t\} \Vert f_0
\Vert_{\mu^y_N}^2$, where $\lambda_{R,y}$ represents the spectral gap
of $\xi^N_{R,y}(t)$ and $\Vert f_0 \Vert_{\mu^y_N}$ the norm of $f_0$
in $L^2(\mu^y_N)$. Since 
\begin{equation*}
\Vert f_0 \Vert^2_{\mu^y_N} \;=\; \sum_{\zeta\in \ms E^y_N}
f_0(\zeta)^2\, \mu^y_N (\zeta) \;=\; \sum_{\zeta\in \ms E^y_N}
\frac{\delta_{\eta,\zeta}}{\mu^y_N (\zeta)^2} \, \mu^y_N (\zeta) 
\;=\; \frac 1{\mu^y_N(\eta)}\;,  
\end{equation*}
as $t= \varepsilon_N$, we conclude that
\begin{equation*}
\Vert \delta_{\eta} \mathcal{S}^{R,y}_N(\varepsilon_N) - \mu^y_N 
\Vert_{\rm  TV}  \;\le \; \frac 1{\mu^y_N(\eta)^{1/2}} \, 
e^{ - \lambda_{R,y} \varepsilon_N}\;.
\end{equation*}
Therefore, in the reversible case, condition \eqref{b07} of (M2) is
fulfilled provided
\begin{equation}
\label{l3-07}
\lim_{N\to\infty} \max_{y\in S} \sup_{\eta\in\ms B_N^y}
\frac 1{\mu^y_N(\eta)^{1/2}} \, 
e^{ - \lambda_{R,y} \varepsilon_N} \;=\; 0\;.
\end{equation}

\begin{proposition}
\label{p02}
Assume that conditions (H1), (H2), and (M1), (M2) are in
force. Suppose, furthermore, that for all $y\in S$
\begin{equation}
\label{b09a}
\lim_{N\to\infty} \frac{\mu_N(\Delta_N)}{\mu_N(\ms E^y_N)} \;=\; 0\;,
\end{equation}
and that either of the following three conditions {\em (a), (b) or (c)}
hold.
\begin{enumerate}
\item[(a)] The process $\{\eta^N(t): t\ge 0\}$ is reversible.
\item[(b)] There exists a constant $0<C_0<\infty$, such that for
all $y,z\in S$, $N\ge 1$,
\begin{equation}
\label{b09}
\frac{1}{C_0} \;\le\; \frac{\mu_N(\ms E^z_N)}{\mu_N(\ms E^y_N)} \;\le\; C_0\;.
\end{equation}
\item[(c)] The sets $\ms B_N^x$ referred to in {(M1)} and {(M2)}
further satisfy
\begin{equation}
\label{tracemixing3}
\lim_{N\to\infty} \frac{T^{N,T,x}_{\rm mix}}{\varepsilon_N}\, 
\frac 1{\mu_N^x(\ms B_N^x)} \, 
\Big(1+\ln\big(\frac{1}{\mu_N^x(\ms B_N^x)}\big)\Big) \;=\; 0\;. 
\end{equation}
\end{enumerate} 
For $k\in\mathbb{N}$, $t_1,\ldots,t_k>0$, let $\mu^{N,\eta^N}_{t_1,\ldots,t_k}$ stand for the joint law of $\big(\xi^N(t_1),\ldots,\xi^N(t_k)\big)$, 
with $\xi^N(0)=\eta^N$. Then, for every $x \in S$ and every sequence $\{\eta^N:N\ge
1\}$, $\eta^N \in \ms E^x_N$,
\begin{equation*}
\lim_{N\to\infty} \big\|\mu^{N,\eta^N}_{t_1,\ldots,t_k}-\sum_{y_1,\ldots, y_k\in S}\!\!\bs P_x\big[\bs X(t_1)=y_1,\ldots,\bs X(t_k)=y_k\big]\, \mu^{y_1}_N\times\cdots\times\mu^{y_k}_N\, \big\|_{\rm TV}\;=\;0\;.
\end{equation*}
\end{proposition}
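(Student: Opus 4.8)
The plan is to combine Proposition \ref{l01}, which governs the coarse variable $X_N(t)=\Phi_N(\xi^N(t))$, with a \emph{local equilibrium} estimate describing the conditional law of $\xi^N(t)$ inside each well, and then to propagate the latter across the $k$ observation times by the Markov property. The first step is to note that the hypotheses of Proposition \ref{p02} entail condition \eqref{03} (this is one of the sufficient conditions recorded in Section \ref{sec2}), so that Proposition \ref{l01} applies: for every $x\in S$, uniformly over $\eta\in\ms E^x_N$,
\[
\bb P_{\eta}\big[X_N(t_1)=y_1,\dots,X_N(t_k)=y_k\big]\ \longrightarrow\ \bs P_x\big[\bs X(t_1)=y_1,\dots,\bs X(t_k)=y_k\big].
\]
The analytic heart of the argument is the single-time local equilibrium estimate, which I abbreviate $(\mathrm{LE})$: for every $t>0$,
\[
\lim_{N\to\infty}\ \max_{x\in S}\ \sup_{\eta\in\ms E^x_N}\ \Big\|\,\mu^{N,\eta}_{t}-\sum_{y\in S}\bb P_{\eta}[\xi^N(t)\in\ms E^y_N]\,\mu^y_N\,\Big\|_{\rm TV}\;=\;0 .
\]

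Granting $(\mathrm{LE})$, I would prove the proposition by induction on $k$, peeling off the first time. By the Markov property, $\mu^{N,\eta^N}_{t_1,\dots,t_k}(\zeta_1,\dots,\zeta_k)=\bb P_{\eta^N}[\xi^N(t_1)=\zeta_1]\,\mu^{N,\zeta_1}_{t_2-t_1,\dots,t_k-t_1}(\zeta_2,\dots,\zeta_k)$. For $\zeta_1\in\ms E^{y_1}_N$ I would apply the $(k-1)$-time statement --- which, being valid for every initial sequence in a well, holds uniformly over $\zeta_1\in\ms E^{y_1}_N$ --- to replace the conditional future law by $\sum_{y_2,\dots,y_k}\bs P_{y_1}[\bs X(t_2-t_1)=y_2,\dots,\bs X(t_k-t_1)=y_k]\,\mu^{y_2}_N\times\cdots\times\mu^{y_k}_N$, and then $(\mathrm{LE})$ at time $t_1$ to replace the first marginal restricted to $\ms E^{y_1}_N$ by $\bb P_{\eta^N}[X_N(t_1)=y_1]\,\mu^{y_1}_N$. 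Since the displayed future factors sum to one over $(\zeta_2,\dots,\zeta_k)$ and over $(y_2,\dots,y_k)$, each substitution costs only the corresponding total-variation error, which vanishes; the configurations $\zeta_1\in\Delta_N$ carry total mass $\bb P_{\eta^N}[\xi^N(t_1)\in\Delta_N]\to0$. Finally the Markov property of the limit chain recombines $\bb P_{\eta^N}[X_N(t_1)=y_1]\,\bs P_{y_1}[\bs X(t_j-t_1)=y_j,\ 2\le j\le k]$, together with the convergence $\bb P_{\eta^N}[X_N(t_1)=y_1]\to\bs P_x[\bs X(t_1)=y_1]$, into $\bs P_x[\bs X(t_1)=y_1,\dots,\bs X(t_k)=y_k]$, which closes the induction. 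Everything therefore reduces to $(\mathrm{LE})$.

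To prove $(\mathrm{LE})$ I would only need to equilibrate over a time window $\delta>0$ that stays fixed as $N\to\infty$: by the Chapman--Kolmogorov identity $\bb P_\eta[\xi^N(t)\in\cdot]=\sum_{\zeta'}\bb P_\eta[\xi^N(t-\delta)=\zeta']\,p_\delta(\zeta',\cdot)$ and convexity of the total variation, it suffices to run the equilibration over the last window $[t-\delta,t]$, the law at time $t-\delta$ being immaterial. Inside the window I would use condition (M1) to reach the good set $\ms B^y_N$ within a short trace-time after the (last) entrance into the current well $y$; then, by \eqref{b04}, the process does not visit $\Delta_N$ for a further time $2\varepsilon_N$, so it can be coupled with the reflected chain $\xi^N_{R,y}$ --- whose unique invariant law is $\mu^y_N$ --- up to time $\varepsilon_N$. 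By \eqref{b07} the reflected chain started in $\ms B^y_N$ is within $o(1)$ of $\mu^y_N$ after time $\varepsilon_N$, and since $\varepsilon_N\ll\delta$ and $\mu^y_N$ is reflected-invariant, the law remains $o(1)$-close to $\mu^y_N$ for the remainder of the window. The mass in $\Delta_N$ is negligible because $\bb P_\eta[\xi^N(t)\in\Delta_N]=\bb P_\eta[X_N(t)=0]\to\bs P_x[\bs X(t)=0]=0$, uniformly in $\eta\in\ms E^x_N$, by Proposition \ref{l01}.

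The hard part is to make this window argument uniform over all wells and all starting configurations while controlling the residual error sources: the mass that at the relevant look-back time sits in the \emph{bad} set $\ms E^y_N\setminus\ms B^y_N$ rather than in $\ms B^y_N$, and the contribution of the rare trajectories whose last genuine inter-well transition falls too close to $t$ to allow equilibration (an error that vanishes as the equilibration sub-window shrinks). This is precisely where \eqref{b09a} and one of the three alternatives enter: \eqref{b09a} makes $\Delta_N$ negligible under the stationary measure $\mu_N$, tying $\mu_N$ to the conditioned measures $\mu^y_N$; in case (a) reversibility supplies the spectral-gap bound already recorded in \eqref{l3-07}, which upgrades \eqref{b07} and controls the bad set through an $L^2(\mu^y_N)$ estimate; in case (b) the comparability \eqref{b09} of the well weights transfers these bounds uniformly across wells; and in case (c) the trace--mixing-time estimate \eqref{tracemixing3} forces equilibration of the trace process from \emph{all} of $\ms E^y_N$, bypassing the bad set. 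I expect this uniform control --- equivalently, promoting the equilibration-from-$\ms B^y_N$ furnished by (M2) to equilibration from an arbitrary configuration of the well at a deterministic time --- to be the main obstacle; once it is secured, the reduction and the induction above are routine.
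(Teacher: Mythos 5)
Your architecture coincides with the paper's: the $k=1$ case by a window decomposition (Markov property at time $t-\delta$, condition (M1) together with \eqref{tracewilldo} to hit $\ms B^y_N$ within $\delta/2$, \eqref{b04} to couple with the reflected chain on a stretch of length $\varepsilon_N$, \eqref{b07} to equilibrate to $\mu^y_N$), and higher $k$ by Corollary \ref{l07} plus the Markov property, exactly as the paper does. But your proposal stops precisely at the crux. After the equilibration step the law of the process is $\bb P_{\mu^y_N}[\xi^N(s)\in\cdot\,]$ for a \emph{macroscopic} residual time $s\in(\delta/2,\delta)$, and your claim that ``since $\varepsilon_N\ll\delta$ and $\mu^y_N$ is reflected-invariant, the law remains $o(1)$-close to $\mu^y_N$ for the remainder of the window'' is unjustified for the true dynamics: $\mu^y_N$ is invariant only for the \emph{reflected} chain, and the coupling with the reflected chain is licensed by \eqref{b04} only up to time $\varepsilon_N$, not on a window of fixed length $\delta$, over which the process genuinely exits the well (that it does so with positive probability is the content of (H1)). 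What is needed is the quasi-invariance statement, the paper's Lemma \ref{3cond}, whose proof rests on the exact identity
\begin{equation*}
\sum_{\xi\in\ms E_N^y} \Big|\,\bb P_{\mu_N^y}\big[ \xi^N(s)=\xi\big]-\mu_N^y(\xi)\Big|
\;=\;\sum_{\zeta\notin \ms E_N^y}\frac{\mu_N(\zeta)}{\mu_N(\ms E_N^y)}\,
\bb P_\zeta\big[ \xi^N(s)\in\ms E_N^y\big]\;,
\end{equation*}
obtained by writing $\bb P_{\mu^y_N}=\mu_N(\ms E^y_N)^{-1}\big(\bb P_{\mu_N}-\sum_{\zeta\notin\ms E^y_N}\mu_N(\zeta)\,\bb P_\zeta\big)$ and using the stationarity of $\mu_N$: the total-variation discrepancy is exactly the inflow into the well from outside. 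You flag this step as ``the main obstacle'' and leave it unresolved, so the proposal has a genuine gap.

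Your sketch of how (a), (b), (c) would close this gap is also partly misdirected. Under (a), the paper does not use the spectral-gap bound \eqref{l3-07} --- which could not help here, since its prefactor $\mu^y_N(\eta)^{-1/2}$ is uncontrolled off $\ms B^y_N$, the very reason \eqref{b07} is stated only for $\eta\in\ms B^y_N$ --- but uses reversibility as a time-reversal identity converting the inflow above into the outflow $\bb P_{\mu^y_N}[\xi^N(s)\in\breve{\ms E}^y_N]$, which then vanishes by (H1). Under (b), the inflow from the other wells is bounded via \eqref{b09} by $C_0\sum_{z\neq y}\bb P_{\mu^z_N}\big[\sup_{0\le s\le\delta}|X^T_N(s)-z|\ge 1\big]$, again small by (H1), the $\Delta_N$ contribution being handled by \eqref{b09a}. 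Under (c), one cannot ``bypass the bad set'' by mixing alone: the paper re-runs the hitting argument \emph{inside} the window, showing by Markov's inequality and \eqref{b09a} that the time spent in $\Delta_N$ on $[s-2\varepsilon_N,s-\varepsilon_N]$ is negligible, using the iterated bound \eqref{iterated} (driven by \eqref{tracemixing3}) to make the stopping time $\sigma_N=\inf\{t\ge s-2\varepsilon_N : \xi^N(t)\in\ms B^y_N\}$ at most $s-\varepsilon_N$ with high probability, and then concluding with \eqref{b04} and \eqref{b07} as before. These are the concrete mechanisms your outline would have to supply; the inductive reduction of the $k$-time statement, on the other hand, is sound and matches the paper's.
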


\begin{remark}[On the hypotheses of Proposition \ref{p02}]
\label{rmf2}
Separation of scales, in the sense
that the process mixes in a well before jumping, is a
common feature in metastable Markov chains and it is usually hidden in
the proof of (H1). Conditions (M1)--(M2) is a mathematically
concrete way to elicit this fact in generality.  In the
proof of the metastability of zero-range processes in \cite{agl},
(M1)--(M2) are actually the way (H1) is established. 
On the other
hand, the "either of three" conditions are not so hard to check. This
is clear for reversibility. Condition (b) can be readily checked when $\mu_N$ is known. Moreover, it is always satisfied if the
rates of the limiting process are the rescaled rates of jumping
between wells (which is an assumption for H1) and the limiting Markov
chain is irreducible.  As for (c) there are standard tools to estimate
mixing times (cf. \cite{lpw} in a general set-up, \cite{agl} in the context of metastability and Remark \ref{rm02} below).
\end{remark}

The article is organized as follows. Propositions \ref{l01} and
\ref{p02} are proved in Sections \ref{sec2}, \ref{sec03},
respectively. In Section \ref{sec02} we show that the assumptions of
these propositions are in force for four different classes of
dynamics. In the last section, we present a general bound for the
probability that a hitting time of some set is smaller than a value in
terms of capacities (which can be evaluated by the Dirichlet and the
Thomson principles). Throughout this article, $c_0$ and $C_0$ are
finite positive constants, independent of $N$, whose values may change
from line to line.

\section{Convergence of the finite-dimensional distributions}
\label{sec2}

In this section, we prove Proposition \ref{l01}, and we present some
sufficient conditions for \eqref{03}. We will use the shorthand
\(T_N(t)\) for the time \(T_{\ms E_N}(t)\) spent by the process
\(\xi^N(t)\) in \(\ms E_N\) before time t. Likewise, we will denote
the generalized inverse of \(T_N(t)\) by \(S_N(t)\). Note that
condition (H2) can be stated as
\begin{equation}
\label{02b}
\lim_{N\to\infty}
\max_{\eta\in\ms E_N} \bb E_{\eta} \big[ t-T_N(t) \big] \;=\; 0\;.
\end{equation}
Since $\{S_N(t) \ge t +\delta\} = \{T_N(t +\delta) \le t\} =
\{t+\delta - T_N(t +\delta) \ge \delta\}$, it follows from the
previous equation that for all $t\ge 0$, $\delta>0$,
\begin{equation}
\label{02}
\lim_{N\to\infty}
\max_{\eta\in\ms E_N} \bb P_{\eta} \big[ S_N(t) \ge t +\delta \big]
\;=\; 0\;. 
\end{equation}

The proof of Proposition \ref{l01} is based on the next technical
result, which provides an estimate for the distribution of the trace
process $X^T_N$ in terms of the distribution of the process $X_N$.

\begin{lemma}
\label{a01}
Assume conditions (H1) and (H2). Then, for all $N\ge 1$, $\delta>0$,
$y\in S$, $\eta\in\ms E_N$, and $r>3\delta$,
\begin{equation*}
\bb P_\eta[ X^T_N(r-3\delta) = y ] \;\le\; \bb P_\eta[ X_N(r) = y ]
\;+\; R^{(1)}_N(y, r, \delta) \;+\; R^{(2)}_N(y,\delta) \;,
\end{equation*} 
where 
\begin{equation*}
\lim_{\delta \to 0} \limsup_{N\to\infty}  R^{(1)}_N(y, r, \delta)\;=\; 0
\end{equation*}
for all $r>0$,  $y\in S$ and 
\begin{equation*}
R^{(2)}_N(y,\delta) \;=\;  \max_{\eta\in\ms E^y_N} 
\sup_{2\delta\le s\le 3\delta} \bb P_{\eta} [ \xi^N(s) \in \Delta_N ]\;.
\end{equation*}
\end{lemma}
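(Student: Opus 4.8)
The plan is to compare the two processes at the single pair of times $r-3\delta$ (for the trace clock) and $r$ (for the real clock), and to charge the mismatch to the two error terms. Writing $u=r-3\delta$, I would start from the decomposition
\[
\bb P_\eta[X^T_N(u)=y]=\bb P_\eta[X^T_N(u)=y,\,X_N(r)=y]+\bb P_\eta[X^T_N(u)=y,\,X_N(r)\ne y].
\]
The first term is bounded by $\bb P_\eta[X_N(r)=y]$, giving the main term, so everything reduces to controlling the second one, which I split along $\{X_N(r)\ne y\}=\{\xi^N(r)\in\Delta_N\}\sqcup\{\xi^N(r)\in\breve{\ms E}^y_N\}$.

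For the sojourn-in-$\Delta_N$ part I would insert the event $\{S_N(u)\le r-2\delta\}$; its complement costs $\max_{\eta\in\ms E_N}\bb P_\eta[S_N(u)\ge u+\delta]$, which by \eqref{02} vanishes as $N\to\infty$ for each fixed $\delta$ and is therefore absorbed into $R^{(1)}$. On the event $\{X^T_N(u)=y,\,S_N(u)\le r-2\delta\}$ the time $s:=S_N(u)$ is a stopping time (since $\{S_N(u)\le t\}=\{T_N(t)\ge u\}$), one has $\xi^N(s)\in\ms E^y_N$, and the remaining time satisfies $r-s\in[2\delta,3\delta]$. Applying the strong Markov property at $s$ and using that $r-s$ is measurable with respect to the history up to time $s$, the conditional probability of $\{\xi^N(r)\in\Delta_N\}$ equals $\bb P_{\xi^N(s)}[\xi^N(r-s)\in\Delta_N]\le\sup_{2\delta\le t\le3\delta}\bb P_{\xi^N(s)}[\xi^N(t)\in\Delta_N]$, and taking the maximum over $\xi^N(s)\in\ms E^y_N$ produces exactly $R^{(2)}_N(y,\delta)$.

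The genuinely delicate part is the transition-to-another-well event $\{\xi^N(r)\in\breve{\ms E}^y_N\}$, which I would translate back to the trace clock. Since $\xi^N(r)\in\ms E_N$ one has $S_N(T_N(r))=r$, hence $X^T_N(T_N(r))=\Psi_N(\xi^N(r))\ne y$; moreover $T_N(r)\in(u,u+3\delta]$ because $T_N(S_N(u))=u$ and $T_N(r)\le r$. Thus on this event the trace chain leaves $y$ within trace-time $3\delta$, and the Markov property of the trace chain at time $u$ bounds its probability by $\rho_N(y,\delta):=\max_{\zeta\in\ms E^y_N}\bb P_\zeta[\exists\,w\in(0,3\delta]:X^T_N(w)\ne y]$, independently of $\eta$. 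The crux is to prove $\lim_{\delta\to0}\limsup_N\rho_N(y,\delta)=0$, so that $\rho_N$ too may be placed in $R^{(1)}$. For this I would choose a maximizing sequence $\zeta^N\in\ms E^y_N$ and invoke (H1): under $\bb P_{\zeta^N}$ the chain $X^T_N$ converges in the Skorohod topology to $\bs X$ started at $y$. The event in $\rho_N$ asserts that the first exit time of $X^T_N$ from $y$ is at most $3\delta$; since the corresponding exit time of the finite-state limit $\bs X$ is exponentially distributed, its law has no atom at $3\delta$, so $\limsup_N\rho_N(y,\delta)\le 1-e^{-3\lambda_y\delta}$, where $\lambda_y$ is the rate at which $\bs X$ leaves $y$, and the right-hand side tends to $0$ with $\delta$. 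I expect the main obstacle to lie precisely in this last limit: first-exit-from-a-point functionals are not continuous for the Skorohod topology, so the passage to the limit must be justified by a Portmanteau argument applied to a suitable closed enlargement of the event, exploiting the absence of atoms of the limiting exit-time distribution; the remaining manipulations are routine bookkeeping with the time change $S_N$, $T_N$.
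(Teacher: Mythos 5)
Your argument is, in substance, the paper's proof with the steps reordered: the same three error sources appear in the same roles, namely the time-change discrepancy $\{S_N(u)\ge u+\delta\}$ (with $u=r-3\delta$) controlled through \eqref{02}, i.e.\ through (H2); the occupancy of $\Delta_N$ after a lag in $[2\delta,3\delta]$ starting from a point of $\ms E^y_N$, which is exactly $R^{(2)}_N(y,\delta)$; and the probability that the trace chain leaves $y$ within trace-time $3\delta$, killed by (H1). The paper organizes this slightly differently: it first shows that, modulo the \eqref{02}-error, $X_N$ visits $y$ at some \emph{real} time in $[r-3\delta,r-2\delta]$, applies the strong Markov property once at the hitting time $H=\inf\{s\ge r-3\delta: X_N(s)=y\}$, and only then splits $\{\xi^N(r)\notin\ms E^y_N\}$ into the $\Delta_N$ and $\breve{\ms E}^y_N$ pieces; you instead split at time $r$ first and condition separately at $S_N(u)$ and at trace-time $u$. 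These are the same ideas with different bookkeeping.

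One step, as literally written, needs repair. In the $\breve{\ms E}^y_N$ branch you claim $T_N(r)\in(u,u+3\delta]$ ``because $T_N(S_N(u))=u$ and $T_N(r)\le r$''; but the lower bound $T_N(r)>u$ requires $S_N(u)\le r$, which is not automatic on your event --- it fails precisely when the time spent in $\Delta_N$ before $r$ exceeds $3\delta$, in which case $T_N(r)<u$, the trace chain sits at a state $\ne y$ at a trace time \emph{before} $u$, and the ``exit from $y$ within trace-time $3\delta$ after $u$'' conclusion breaks (recall the lemma asserts an inequality for every fixed $N$, so this cannot be waved away asymptotically). The fix is exactly the device you already used in the $\Delta_N$ branch: intersect this branch as well with $\{S_N(u)\le r-2\delta\}$ and charge its complement, once, to $R^{(1)}$ via \eqref{02}. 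Note that the paper's ordering sidesteps this entirely: after the strong Markov restart from $\zeta\in\ms E^y_N$ at time zero, translating $\{\xi^N(s)\in\breve{\ms E}^y_N\}$ to the trace clock uses only the trivial half $T_N(s)\le s\le 3\delta$, with no lower bound needed. Finally, your Portmanteau elaboration of the (H1) step is a correct justification of what the paper merely asserts: in the $J_1$ topology on paths with values in the finite set $S$, jumps (of size bounded below) persist under limits, so the closure of the exit event is contained in $\{\text{first jump time}\le 3\delta\}$, whose probability under $\bs P_y$ is $1-e^{-3\lambda_y\delta}$ by the atomlessness of the exponential exit law, and this vanishes as $\delta\to 0$.
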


\begin{proof}
Fix $N\ge 1$, $\delta>0$, $y\in S$, $\eta\in\ms E_N$ and $r>3\delta$.  By
definition of $X^T_N$, and since $S_N(r-3\delta) \ge r-3\delta$,  
\begin{equation*}
\bb P_\eta[ X^T_N(r-3\delta) = y ] \; =\; 
\bb P_\eta[ X_N(S_N(r-3\delta)) = y ] \;\le\;
\bb P_\eta[ A_N(r,\delta,y) ] \; +\; J^{(1)}_N(\eta, r, \delta)\;, 
\end{equation*}
where
\begin{equation*}
A_N(r,\delta,y) \;=\; \big\{ X_N(s) = y  \text{ for some } r-3\delta\le
s\le r-2\delta \big \}\;, 
\end{equation*}
and
\begin{equation*}
J^{(1)}_N(\eta, r, \delta) \;=\; \bb P_{\eta} [ S_N(r-3\delta) \ge
r-2\delta ]  \;.
\end{equation*}
By \eqref{02} with $t=r-3\delta$,
\begin{equation}
\label{205}
\lim_{N\to\infty} \max_{\eta\in \ms E_N} J^{(1)}_N(\eta, r, \delta)
\;=\; 0\;.
\end{equation}

On the other hand, 
\begin{equation*}
\bb P_\eta[ A_N(r,\delta,y) ] 
\;\le\; \bb P_\eta[ X_N(r) = y ] 
\; +\; \bb P_\eta \big[ A_N(r,\delta,y)
\,,\, \xi^N(r) \not\in \ms E^y_N  \big] \;.
\end{equation*}
Denote by $H$ the first time the process $X_N(s)$ hits the point $y$
after $r-3\delta$:
\begin{equation*}
H \;=\; \inf\{s\ge r-3\delta : X_N(s) =y\}\;.
\end{equation*}
By the strong Markov property, the second term on the right hand side
of the penultimate equation is equal to
\begin{equation*}
\bb E_\eta \Big[  \mb 1\{ H \le r-2\delta\} \, \bb P_{\xi^N(H)} [ 
\xi^N(r-H) \not\in \ms E^y_N]\,  \Big]
\;\le\; \max_{\eta\in\ms E^y_N} \sup_{2\delta\le s\le 3\delta} 
\bb P_{\eta} [ \xi^N(s) \not\in \ms E^y_N]\;.
\end{equation*}

Recall from \eqref{17} the definition of $\breve{\ms E}^y_N$. The
previous probability is bounded by
\begin{equation*}
\bb P_{\eta} [ \xi^N(s) \in \breve{\ms E}^y_N] \;+\;
\bb P_{\eta} [ \xi^N(s) \in \Delta_N] \;.
\end{equation*}
Since $s\le 3\delta$, the first term is bounded by 
\begin{equation*}
J^{(2)}_N (y,\delta) \;:=\;
\max_{\eta\in\ms E^y_N}
\bb P_{\eta} \big [ X^T_N(s') \not = y \text{ for some } s'\le 3
\delta \big]\;.
\end{equation*}
By condition (H1), $J^{(2)}_N (y,\delta)$ vanishes as $N\to \infty$
and then $\delta\to 0$.  To complete the proof of the lemma, it
remains to set $R^{(1)}_N(y, r, \delta) = \max_{\eta\in \ms E_N}
J^{(1)}_N(\eta, r, \delta) + J^{(2)}_N (y,\delta)$ and to recall the
estimate \eqref{205}.
\end{proof}

Denote by $\bs P_x$, $x\in S$, the probability measure on $D(\bb R_+,
S)$ induced by the Markov chain $\bs X(t)$ starting from $x$. Since
$\bs P_x[\bs X(t)\not = \bs X(t-)]=0$ for all $t\ge 0$, the finite-dimensional
distributions of $X^T_N$ converge to the ones of $\bs X(t)$.

\begin{proof}[{\bf Proof of Proposition  \ref{l01}}]
We prove the result for one-dimensional distributions. The extension
to higher order is immediate.  Fix $x$, $y\in S$, $r>0$, and a
sequence $\{\eta^N : N\ge 1\}$, $\eta^N\in\ms E^x_N$.  By assumption
(H1), by Lemma \ref{a01}, and by \eqref{03},
\begin{equation*}
\bs P_x[ \bs X(r) = y] \;=\; \lim_{\delta\to 0}
\bs P_x[ \bs X(r-3\delta) = y] \;\le\; \liminf_{N\to\infty} 
\bb P_{\eta^N}[ X_N(r) = y ]\;.
\end{equation*}
Since
\begin{equation*}
1 \;=\; \sum_{y\in S} \bs P_x[ \bs X(r) = y] \quad\text{and}\quad
\sum_{y\in S} \bb P_{\eta^N}[ X_N(r) = y ] \;\le\; 1\;,
\end{equation*}
the inequality in the penultimate formula must be an identity for each
$y\in S$, which completes the proof of the proposition.
\end{proof}

\subsection{The assumption \eqref{03}}
We conclude this section presenting three sets of sufficient
conditions for the bound \eqref{03}.

\begin{remark}
\label{rm1}
To prove condition \eqref{03}, one is tempted to argue that for
all $2\delta\le s\le 3\delta$, $\eta\in\ms E_N$,
\begin{equation*}
\bb P_{\eta} [ \xi^N(s) \in \Delta_N ] \;\le\;
\bb P_{\eta} [ H_{\Delta_N} \le 3\delta ]\;.
\end{equation*}
In many examples, however, it is not true that the right hand side
vanishes, uniformly over configurations in $\ms E_N$, as $N\to\infty$
and then $\delta\to 0$. In condensing zero ranges processes or in
random walks in a potential field, starting from certain configuration
in a valley $\ms E^x_N$, in a time interval $[0,\delta]$, the chain
$\xi^N(s)$ visits many times the set $\Delta_N$ and the right hand
side of the previous inequality, for such configurations $\eta$, is
close to $1$.
\end{remark}

The next lemma provides sufficient conditions for assumption
\eqref{03} to hold. It is tailor-made to cover the case where the
metastable sets are singletons. This includes spin models on finite
sets \cite{nevsch, sch1, bl2011, bl2015a, CNS2015, ll, lx2015,
  CNS2016}, inclusion processes \cite{bdg,grv}, and random walks among
random traps \cite{jlt1, jlt2}.

\begin{lemma}
\label{l08}
Assume that for each $x\in S$, 
\begin{equation*}
\lim_{N\to\infty} \max_{\eta\in\ms E^x_N} 
\frac {\mu_N(\Delta_N)}{\mu_N(\eta)} \;=\; 0\;.
\end{equation*}
Then, \eqref{03} holds.
\end{lemma}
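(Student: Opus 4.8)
The plan is to avoid the hitting-time bound warned against in Remark \ref{rm1} and instead control the one-time marginal $\bb P_\eta[\xi^N(s)\in\Delta_N]$ directly, exploiting the stationarity of $\mu_N$. The heart of the argument will be the pointwise estimate
\[
\bb P_\eta[\xi^N(s)\in\Delta_N]\;\le\;\frac{\mu_N(\Delta_N)}{\mu_N(\eta)}\;,
\]
valid for \emph{every} $\eta\in E_N$ and every $s\ge 0$. Once this is in hand, the conclusion is immediate: restricting to $\eta\in\ms E^x_N$ and taking the supremum over $s$ and the maximum over $\eta$, the right-hand side is bounded by $\max_{\eta\in\ms E^x_N}\mu_N(\Delta_N)/\mu_N(\eta)$, which tends to $0$ by hypothesis. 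Since the bound is uniform in $s$, the restriction $2\delta\le s\le 3\delta$ and the outer limit $\delta\to 0$ in \eqref{03} play no role whatsoever.

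To derive the pointwise bound, I would first write the marginal as a sum over the transition probabilities, $\bb P_\eta[\xi^N(s)\in\Delta_N]=\sum_{\zeta\in\Delta_N}\bb P_\eta[\xi^N(s)=\zeta]$, and then invoke the invariance of $\mu_N$ under the dynamics of $\xi^N(t)$: for each fixed $\zeta\in E_N$ and each $s\ge 0$,
\[
\sum_{\eta'\in E_N}\mu_N(\eta')\,\bb P_{\eta'}[\xi^N(s)=\zeta]\;=\;\mu_N(\zeta)\;.
\]
The main step is the observation that all summands on the left are non-negative, so retaining only the single term $\eta'=\eta$ yields $\mu_N(\eta)\,\bb P_\eta[\xi^N(s)=\zeta]\le\mu_N(\zeta)$. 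Summing this over $\zeta\in\Delta_N$ gives $\mu_N(\eta)\,\bb P_\eta[\xi^N(s)\in\Delta_N]\le\mu_N(\Delta_N)$, which after dividing by $\mu_N(\eta)$ is precisely the claimed estimate. Note that this uses only stationarity, not reversibility, so no extra hypothesis is needed.

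I expect no genuine obstacle here; the only conceptual point is to recognize that one should bound the fixed-time marginal rather than the event $\{H_{\Delta_N}\le 3\delta\}$. As Remark \ref{rm1} explains, the naive inequality $\bb P_\eta[\xi^N(s)\in\Delta_N]\le\bb P_\eta[H_{\Delta_N}\le 3\delta]$ is useless because from a typical configuration in a well the process may already have visited $\Delta_N$ many times by time $3\delta$, so the right-hand side can be close to $1$. The stationary-measure bound sidesteps this entirely, since it controls the probability of \emph{being} in $\Delta_N$ at the single time $s$, and the relative weight $\mu_N(\Delta_N)/\mu_N(\eta)$ is exactly the quantity the hypothesis forces to vanish uniformly over $\eta\in\ms E^x_N$.
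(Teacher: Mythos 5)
Your proposal is correct and coincides with the paper's own proof: the paper likewise bounds $\bb P_{\eta} [ \xi^N(s) \in \Delta_N ] \le \mu_N(\eta)^{-1}\, \bb P_{\mu_N} [ \xi^N(s) \in \Delta_N ] = \mu_N(\Delta_N)/\mu_N(\eta)$ by ``multiplying and dividing by $\mu_N(\eta)$'' and invoking stationarity, which is exactly your term-dropping argument written compactly. Your observation that the bound is uniform in $s$, so that $\delta$ plays no role, is also implicit in the paper's one-line conclusion.
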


\begin{proof}
Fix $x\in S$, $\eta\in \ms E^x_N$ and $s>0$.  Multiplying and dividing the
probability $\bb P_{\eta} [ \xi^N(s) \in \Delta_N ]$ by $\mu_N(\eta)$,
we obtain that
\begin{equation*}
\bb P_{\eta} [ \xi^N(s) \in \Delta_N ] \;\le\; 
\frac 1{\mu_N(\eta)} \bb P_{\mu_N} [ \xi^N(s) \in \Delta_N ]
\;=\; \frac {\mu_N(\Delta_N)}{\mu_N(\eta)}\;\cdot
\end{equation*}
In particular, condition \eqref{03} follows from the assumption of the
lemma. 
\end{proof}

The next condition is satisfied by random walks in a potential field
\cite{begk01, lmt2015, lseo2016a, lseo2016b}, illustrated by Example
\ref{ex2}. It is instructive to think of the sets $\ms B^x_N\subset\ms
E^x_N$ below, as the deep part of the well $\ms E^x_N$. The first
condition requires the process to reach the set $\ms B_N^x$ quickly,
while the second one imposes that it will not attain the set
$\Delta_N$ in a short time interval when starting from $\ms B_N^x$.

\begin{lemma}
\label{l04}
Assume that for each $x\in S$ there exists a set $\ms B_N^x\subset \ms
E_N^x$ such that
\begin{gather*}
\lim_{N\to\infty} \sup_{\eta\in \ms E^x_N} 
\bb P_\eta [ H_{\ms B_N^x}>\delta]\;=\;0 \quad\text{for all $\delta>0$}\;, \\
\lim_{\delta \to 0} 
\limsup_{N\to\infty} \sup_{\delta \le s' \le 3\delta}\sup_{\eta\in\ms B_N^x}
\bb P_\eta [ \xi^N(s') \in \Delta_N ] \;=\; 0\;.
\end{gather*}
Then, condition \eqref{03} is in force.
\end{lemma}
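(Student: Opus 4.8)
The plan is to reduce the question, which in \eqref{03} starts from an \emph{arbitrary} point of the well $\ms E^x_N$, to the same question started from the deep set $\ms B_N^x$, by means of the strong Markov property at the hitting time $H := H_{\ms B_N^x}$. First I would fix $x\in S$, $\delta>0$, $\eta\in\ms E^x_N$ and $s\in[2\delta,3\delta]$, and split according to whether the chain has reached $\ms B_N^x$ by time $\delta$:
\begin{equation*}
\bb P_\eta[\xi^N(s)\in\Delta_N]\;\le\;\bb P_\eta[H>\delta]\;+\;\bb P_\eta\big[\xi^N(s)\in\Delta_N\,,\,H\le\delta\big]\;.
\end{equation*}
The first summand is exactly what the first hypothesis controls (uniformly over $\eta\in\ms E^x_N$, for each fixed $\delta$). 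For the second summand I would condition on $\ms F_H$: since $H$ is a stopping time and the chain is right-continuous on a discrete state space, $\xi^N(H)\in\ms B_N^x$ on $\{H<\infty\}$, whence by the strong Markov property
\begin{equation*}
\bb P_\eta\big[\xi^N(s)\in\Delta_N\,,\,H\le\delta\big]\;=\;\bb E_\eta\Big[\mb 1\{H\le\delta\}\,\bb P_{\xi^N(H)}\big[\xi^N(s-H)\in\Delta_N\big]\Big]\;.
\end{equation*}

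The crucial bookkeeping is the time window. On the event $\{H\le\delta\}$, with $s\in[2\delta,3\delta]$, the remaining time satisfies $s-H\in[\delta,3\delta]$ — precisely the range appearing in the second hypothesis — and the inner starting point $\xi^N(H)$ lies in $\ms B_N^x$. The bound on the inner probability is therefore deterministic and uniform, so the expectation above is at most $\sup_{\delta\le s'\le 3\delta}\sup_{\zeta\in\ms B_N^x}\bb P_\zeta[\xi^N(s')\in\Delta_N]$. Collecting the two estimates and taking $\max_{\eta\in\ms E^x_N}\sup_{2\delta\le s\le3\delta}$ gives
\begin{equation*}
\max_{\eta\in\ms E^x_N}\sup_{2\delta\le s\le3\delta}\bb P_\eta[\xi^N(s)\in\Delta_N]\;\le\;\sup_{\eta\in\ms E^x_N}\bb P_\eta[H>\delta]\;+\;\sup_{\delta\le s'\le3\delta}\sup_{\zeta\in\ms B_N^x}\bb P_\zeta[\xi^N(s')\in\Delta_N]\;.
\end{equation*}
I would then take $\limsup_{N\to\infty}$ at fixed $\delta$, killing the first term by the first hypothesis, and finally let $\delta\to0$, killing the (already-$\limsup$'d) second term by the second hypothesis. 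This is exactly \eqref{03}.

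I do not expect a genuine analytic obstacle here: the entire content sits in the two hypotheses and the strong Markov property. The one step that must be executed with care is the matching of the time intervals — the statement is engineered so that subtracting an overshoot of at most $\delta$ from a time in $[2\delta,3\delta]$ lands in $[\delta,3\delta]$, which is why the two hypotheses are stated with those particular windows. The only other point to verify explicitly is that $\xi^N(H)\in\ms B_N^x$, so that the inner supremum may legitimately be restricted to $\ms B_N^x$; this is immediate from right-continuity of the jump chain, and the uniformity of the resulting bound is what allows it to pass through the outer expectation unchanged.
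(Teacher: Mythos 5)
Your proposal is correct and follows essentially the same route as the paper's proof: the same decomposition over the event $\{H_{\ms B_N^x}\le\delta\}$, the same application of the strong Markov property at $H_{\ms B_N^x}$ with $\xi^N(H_{\ms B_N^x})\in\ms B_N^x$, and the same observation that $s-H_{\ms B_N^x}\in[\delta,3\delta]$ when $s\in[2\delta,3\delta]$, followed by taking $\limsup_{N\to\infty}$ and then $\delta\to 0$. Your explicit remarks on the time-window bookkeeping and on $\xi^N(H_{\ms B_N^x})\in\ms B_N^x$ are exactly the points the paper's argument relies on implicitly.
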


\begin{proof}
Fix $x\in S$, $\eta \in \ms E^x_N$, $\delta>0$, $s\in
[2\delta, 3\delta]$, and write
\begin{equation*}
\bb P_{\eta} [ \xi^N(s) \in \Delta_N ] \;\le\;
\bb P_{\eta} [ H_{\ms B_N^x} \le \delta \,,\, \xi^N(s) \in \Delta_N ]
\;+\; \bb P_{\eta} [ H_{\ms B_N^x} > \delta ]\;.
\end{equation*}
On the event $\{H_{\ms B_N^x}<+\infty\}$ let us denote $\xi_{\ms
  B}^N=\xi^N(H_{\ms B_N^x})$. By the strong Markov property and since
$s$ belongs to the interval $[2\delta, 3\delta]$, the first term on
the right hand side is bounded by
\begin{equation*}
\bb E_{\eta} \Big[  \mb 1\{H_{\ms B_N^x} \le \delta\} \,  
\bb P_{\xi^N_{\ms B}} [\xi^N(s-H_{\ms B_N^x}) \in \Delta_N ] \, \Big]
\;\le\; \sup_{\delta \le s' \le 3\delta}\sup_{\eta\in\ms B_N^x}
\bb P_\eta [ \xi^N(s') \in \Delta_N ]\;,
\end{equation*}
which completes the proof of the lemma.
\end{proof}

In Lemmata \ref{ltr} and \ref{l06a} below we present some conditions which imply that,
for all $\delta>0$, $\sup_{\eta\in \ms E^x_N} \bb P_{\eta} [H_{\ms
  B^x_N} \ge \delta]$ vanishes as $N\to\infty$.

Recall from \eqref{203}, \eqref{204} that $\mu^x_N$ represents the
stationary measure $\mu_N$ conditioned to $\ms E^x_N$, and
$\mathcal{S}^{R,x}_N(t)$ the semigroup of the reflected process on $\ms
E^x_N$. The third set of conditions which yield \eqref{03} relies on
the next estimate.

\begin{lemma}
\label{l06}
For every $0<T<\delta <s$, $x\in S$, and configuration $\eta\in \ms
E^x_N$, 
\begin{equation*}
\bb P_{\eta} [ \xi^N(s) \in \Delta_N ] \;\le\;
\bb P_{\eta} [ H_{(\ms E^x_N)^c} \le T ] \;+\; 
\Vert \delta_{\eta} \mathcal{S}^{R,x}_N (T) - \mu^x_N \Vert_{\rm TV} \;+\;
\frac {\mu_N(\Delta_N)}{\mu_N(\ms E^x_N)}\, \;. 
\end{equation*}
\end{lemma}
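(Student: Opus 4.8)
The plan is to decompose according to whether the chain has left the well $\ms E^x_N$ by the intermediate time $T$. Writing $H=H_{(\ms E^x_N)^c}$ for the first exit time from $\ms E^x_N$ (which, since $\eta\in\ms E^x_N$, is the first instant the chain reaches the complement), I would start from
\[
\bb P_\eta[\xi^N(s)\in\Delta_N]\;\le\;\bb P_\eta[H\le T]\;+\;\bb P_\eta[\xi^N(s)\in\Delta_N,\,H>T]\;.
\]
The first term is already the first summand in the statement, so everything reduces to controlling the second term, on the event that the chain has stayed in $\ms E^x_N$ throughout $[0,T]$.

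The crux is to relate the chain on $\{H>T\}$ to the reflected chain. Let $p^{\rm kill}_T(\eta,\cdot)$ denote the sub-probability kernel of the chain killed upon leaving $\ms E^x_N$, so that $\bb P_\eta[\xi^N(T)=\zeta,\,H>T]=p^{\rm kill}_T(\eta,\zeta)$ for $\zeta\in\ms E^x_N$. The key step I would isolate is the pointwise domination
\[
p^{\rm kill}_T(\eta,\zeta)\;\le\;p^{R,x}_T(\eta,\zeta)\;,\qquad \zeta\in\ms E^x_N\;,
\]
where $p^{R,x}_T$ is the transition kernel of $\mathcal{S}^{R,x}_N(T)$. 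This I would get from a graphical coupling of the killed and reflected chains driven by the same internal jump clocks: both follow identical trajectories until the first ring of an exit clock, at which instant the killed chain dies while the reflected chain simply ignores the forbidden jump and continues. Hence $\{\xi^{\rm kill}(T)=\zeta\}=\{\xi^{R,x}(T)=\zeta,\,H>T\}\subseteq\{\xi^{R,x}(T)=\zeta\}$, which gives the inequality. I expect this domination to be the main obstacle, in the sense that it is the one genuinely probabilistic ingredient; the rest is bookkeeping.

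With the domination in hand I would apply the Markov property at the deterministic time $T$ and estimate, setting $g(\zeta):=\bb P_\zeta[\xi^N(s-T)\in\Delta_N]\in[0,1]$ (note $s-T>0$ since $T<\delta<s$),
\[
\bb P_\eta[\xi^N(s)\in\Delta_N,\,H>T]=\sum_{\zeta\in\ms E^x_N}p^{\rm kill}_T(\eta,\zeta)\,g(\zeta)\;\le\;\sum_{\zeta\in\ms E^x_N}p^{R,x}_T(\eta,\zeta)\,g(\zeta)\;.
\]
Writing $p^{R,x}_T(\eta,\zeta)=\mu^x_N(\zeta)+\big(p^{R,x}_T(\eta,\zeta)-\mu^x_N(\zeta)\big)$ and using $0\le g\le 1$ together with the positive-part formula for the total variation distance recalled in Section \ref{sec-2}, the error term is at most $\sum_\zeta\big(p^{R,x}_T(\eta,\zeta)-\mu^x_N(\zeta)\big)^+=\Vert\delta_\eta\mathcal{S}^{R,x}_N(T)-\mu^x_N\Vert_{\rm TV}$, which is the second summand in the statement (the positive part is exactly what avoids an unwanted factor of $2$).

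Finally, the main term is $\sum_{\zeta\in\ms E^x_N}\mu^x_N(\zeta)\,g(\zeta)=\bb P_{\mu^x_N}[\xi^N(s-T)\in\Delta_N]$. Since $\mu^x_N(\zeta)=\mu_N(\zeta)/\mu_N(\ms E^x_N)$ is supported on $\ms E^x_N$, enlarging the sum to all of $E_N$ and invoking the stationarity of $\mu_N$ under $\xi^N$ yields
\[
\bb P_{\mu^x_N}[\xi^N(s-T)\in\Delta_N]\;\le\;\frac{1}{\mu_N(\ms E^x_N)}\,\bb P_{\mu_N}[\xi^N(s-T)\in\Delta_N]\;=\;\frac{\mu_N(\Delta_N)}{\mu_N(\ms E^x_N)}\;,
\]
the third summand. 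Collecting the three bounds completes the argument; note that $\delta$ enters only through the requirement $T<s$, so it plays no role in the estimate beyond fixing the admissible range of $s$.
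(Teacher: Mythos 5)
Your proposal is correct and follows essentially the same route as the paper: the same decomposition on $\{H_{(\ms E^x_N)^c}\le T\}$ versus $\{H_{(\ms E^x_N)^c}>T\}$, the same coupling with the reflected chain up to time $T$ (which you merely make more explicit via the killed-kernel domination $p^{\rm kill}_T\le p^{R,x}_T$), and the same total-variation plus stationarity estimates for the remaining two terms. No gaps.
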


\begin{proof}
Fix $x\in S$, $\eta \in \ms E^x_N$, and $0<T<\delta <s$. Clearly, 
\begin{equation*}
\bb P_{\eta} [ \xi^N(s) \in \Delta_N ] \;\le\;
\bb P_{\eta} [ H_{(\ms E^x_N)^c} \le T ] \;+\; 
\bb P_{\eta} [ \xi^N(s) \in \Delta_N \,,\, H_{(\ms E^x_N)^c} > T]\;.
\end{equation*}
On the set $\{H_{(\ms E^x_N)^c} > T\}$, up to time $T$, we may couple
the chain $\xi^N(s)$ with the chain reflected at the boundary of $\ms
E^x_N$, which has been denoted by $\xi^N_{R,x}(s)$. By the Markov
property at time $T$ and replacing $\xi^N(s)$ by $\xi^N_{R,x}(s)$, the
second term of the previous equation becomes
\begin{align*}
& \sum_{\zeta\in\ms E^x_N} \bb P_{\eta} \big[ \xi^N_{R,x} (T) = \zeta \,,\,
H_{(\ms E^x_N)^c} > T \big] \, \bb P_{\zeta} [\xi^N(s-T) \in \Delta_N ] \\
&\quad \;\le\;
\sum_{\zeta\in\ms E^x_N} \bb P_{\eta} \big[ \xi^N_{R,x} (T) =
\zeta \big] \, \bb P_{\zeta} [\xi^N(s-T) \in \Delta_N ] \;.  
\end{align*}
By definition of the total variation distance, and since, by
assumption, the stationary measure of the reflected process is given
by $\mu^x_N = \mu_N/\mu_N(\ms E_N^x)$, this sum is less than or equal
to
\begin{equation*}
\Vert \delta_{\eta} \mathcal{S}^{R,x}_N (T) - \mu^x_N \Vert_{\rm TV} \;+\;
\frac 1{\mu_N(\ms E_N^x)}\, \bb P_{\mu_N} [\xi^N(s-T) \in \Delta_N
]\;. 
\end{equation*}
The second term is equal to $\mu_N(\Delta_N)/\mu_N(\ms E_N^x)$, which
completes the proof of the lemma.
\end{proof}
Recall from \eqref{202} that we denote by $H_{\ms B_N^x}^{\ms E_N}$
($H_{\ms B_N^x}^{\ms E_N^x}$, respectively) the hitting time on the
set $\ms B_N^x$ for the trace process on $\ms E_N$ ($\ms E_N^x$,
respectively).
\begin{lemma}\label{ltr}
Under assumptions (H1) and (H2), for every $x\in S$ and $\eta\in\ms E_N^x$,
\begin{equation}
\lim_{N\to\infty}\bb P_\eta \big[ H_{\ms B_N^x}^{\ms E_N^x}>\delta\big] =0,\ \forall \delta>0\ \Longleftrightarrow\
\lim_{N\to\infty}\bb P_\eta \big[ H_{\ms B_N^x}>\delta \, \big]=0,\ \forall \delta>0.
\end{equation}
\end{lemma}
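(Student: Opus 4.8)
The reverse implication $(\Leftarrow)$ is immediate and requires no hypotheses: from the definition \eqref{202}, $H_{\ms B_N^x}^{\ms E_N^x} = \int_0^{H_{\ms B_N^x}} \mb 1\{\xi^N(s) \in \ms E^x_N\}\, ds \le H_{\ms B_N^x}$, so that $\{H_{\ms B_N^x}^{\ms E_N^x} > \delta\} \subseteq \{H_{\ms B_N^x} > \delta\}$ and hence $\bb P_\eta[H_{\ms B_N^x}^{\ms E_N^x} > \delta] \le \bb P_\eta[H_{\ms B_N^x} > \delta]$ for every $\delta>0$. The content of the lemma is therefore entirely in the forward implication $(\Rightarrow)$, where the difficulty is that the gap $H_{\ms B_N^x} - H_{\ms B_N^x}^{\ms E_N^x}$ equals the time the chain spends \emph{outside} $\ms E^x_N$ before hitting $\ms B_N^x$, and this splits into time in $\Delta_N$ and time in the other wells $\breve{\ms E}^x_N$. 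The plan is to control the first contribution by (H2) and the second by (H1). Throughout I read $\eta=\eta^N\in\ms E^x_N$ as a sequence, since the limits in $N$ only make sense that way.

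Write $T_{\ms E^x_N}(t) = \int_0^t \mb 1\{\xi^N(s)\in\ms E^x_N\}\, ds$, so that $H_{\ms B_N^x}^{\ms E_N^x} = T_{\ms E^x_N}(H_{\ms B_N^x})$, and recall from the start of this section the trace-on-$\ms E_N$ clock $T_N(t)$ with generalized inverse $S_N(t)$. The idea is to compare these three clocks — real time, $T_N$, and $T_{\ms E^x_N}$ — on a good event, and to use the consequence \eqref{02} of (H2), namely that $S_N(\delta)$ exceeds $\delta$ only by a vanishing amount uniformly over $\ms E_N$, to replace real time by trace-on-$\ms E_N$ time.

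The role of the other wells is where (H1) enters, and this is the heart of the argument. Let $\tau=\inf\{u\ge 0: X^T_N(u)\neq x\}$ be the first time the trace-on-$\ms E_N$ chain $X^T_N=\Psi_N(\xi^{\ms E_N})$ leaves $x$. On the event $\{\tau>\delta\}$ the chain $X^T_N$ stays at $x$ throughout the trace-time interval $[0,\delta]$, which means the original chain visits no other well before the trace-on-$\ms E_N$ clock reaches $\delta$, i.e. before real time $S_N(\delta)$; consequently $T_{\ms E^x_N}(t)=T_N(t)$ for all $t\le S_N(\delta)$. I then claim that on $\{\tau>\delta\}\cap\{H_{\ms B_N^x}^{\ms E_N^x}\le\delta'\}$ with $\delta'<\delta$ one has $H_{\ms B_N^x}\le S_N(\delta)$: otherwise monotonicity of $T_{\ms E^x_N}$ gives $H_{\ms B_N^x}^{\ms E_N^x}=T_{\ms E^x_N}(H_{\ms B_N^x})\ge T_{\ms E^x_N}(S_N(\delta))=T_N(S_N(\delta))=\delta>\delta'$, a contradiction.

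Assembling these estimates, for fixed $\delta_0>0$ and any $0<\delta'<\delta<\delta_0$ the good event forces $H_{\ms B_N^x}\le S_N(\delta)$, whence
\[
\bb P_\eta[H_{\ms B_N^x} > \delta_0] \;\le\; \bb P_\eta[S_N(\delta) > \delta_0] \;+\; \bb P_\eta[\tau \le \delta] \;+\; \bb P_\eta[H_{\ms B_N^x}^{\ms E_N^x} > \delta']\;.
\]
Letting $N\to\infty$ kills the first term by \eqref{02} (as $\delta_0-\delta>0$) and the third by the hypothesis; letting then $\delta\to 0$ kills the middle term, since by (H1) $\limsup_{N\to\infty}\bb P_\eta[\tau\le\delta]\le \bs P_x[\bs X(u)\neq x \text{ for some } u\le\delta]\to 0$ — the very same consequence of (H1) used to dispose of $J^{(2)}_N$ in the proof of Lemma \ref{a01}. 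I expect the main obstacle to be the time-change bookkeeping in the previous paragraph, namely arguing rigorously that $\{\tau>\delta\}$ prevents any excursion to another well up to real time $S_N(\delta)$ and thereby forces the chain to reach $\ms B_N^x$ before $S_N(\delta)$; once the three clocks are aligned on the good event, the conclusion is a routine assembly of (H1), (H2), and the assumed hypothesis.
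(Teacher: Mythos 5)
Your proof is correct and follows essentially the same route as the paper's: your three error terms $\bb P_\eta[S_N(\delta)>\delta_0]$, $\bb P_\eta[\tau\le\delta]$ and $\bb P_\eta[H^{\ms E_N^x}_{\ms B_N^x}>\delta']$ correspond exactly to the paper's $\bb P_\eta[T_N(\delta)\le\delta/2]$, $\bb P_\eta[H^{\ms E_N}_{\ms E_N\setminus\ms E_N^x}\le\delta/2]$ and $\bb P_\eta[H^{\ms E_N^x}_{\ms B_N^x}>\delta/2]$, with your explicit clock identity $T_{\ms E_N^x}=T_N$ on $\{\tau>\delta\}$ playing the role of the paper's coupling of the trace process $\xi^{\ms E_N}$ with the trace on the well $\ms E_N^x$. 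If anything, your explicit ordering of the limits ($N\to\infty$ first, then $\delta\to 0$ for the (H1) term, using monotonicity of $\delta_0\mapsto\bb P_\eta[H_{\ms B_N^x}>\delta_0]$) spells out a step that the paper's display \eqref{tracewilldo} leaves implicit.
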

\begin{proof}
By definition of $H_{\ms B_N^x}^{\ms E_N}$, for any
$\eta\in\ms E_N^x\setminus\ms B_N^x$,
\begin{align*}
& \bb P_\eta \big[ H_{\ms B_N^x}>\delta \, \big] 
\;\le\; \bb P_\eta \big[ H_{\ms B_N^x}^{\ms E_N}>T_N(\delta)\big] \\
&\qquad \le \; \bb P_\eta \big[ T_N(\delta) \;\le\;
\frac{\delta}{2} \, \big] \;+\; 
\bb P_\eta \big[ H_{\ms B_N^x}^{\ms E_N}>T_N(\delta),\ T_N(\delta)> 
\frac{\delta}{2} \, \big]\;.
\end{align*}
The first term on the right hand side of the preceding equation
vanishes ,as $N\to\infty$, by \eqref{02b}. The second term is bounded by
\begin{align*}
\bb P_\eta \big[ H_{\ms B_N^x}^{\ms E_N}>\frac{\delta}{2} \, \big]
\; \le\; \bb P_\eta \big[ H_{\ms E_N\setminus \ms E_N^x}^{\ms E_N}
\; \le\; \frac{\delta}{2} \, \big] \;+\;
\bb P_\eta \big[ H_{(\ms E_N^x\setminus \ms B_N^x)^c}^{\ms E_N}
>\frac{\delta}{2}\, \big]\;.
\end{align*}
Since the event $\{H_{\ms E_N\setminus \ms E_N^x}^{\ms E_N} \le
\delta/2\}$ can be expressed as $\{X^T_N(s) \not = X^T_N(0)$ for some
$0<s\le \delta/2\}$, by assumption (H1), the first term on the right
hand side of the last equation vanishes as $N\to\infty$.  Just as in
the proof of Lemma \ref{l06}, on the event $\{H_{\ms E_N\setminus \ms
  E_N^x}^{\ms E_N}> \delta/2\}$ we may couple the trace process
$\xi^{\ms E_N}$ with the trace process on the well $\ms E_N^x$. This
permits to bound the last term in the preceding equation by $\bb
P_\eta [ H_{\ms B_N^x}^{\ms E_N^x}> \delta/2]$. Hence,
\begin{equation}
\label{tracewilldo}
\limsup_{N\to\infty} \sup_{\eta\in \ms E^x_N} 
\bb P_\eta \big[ H_{\ms  B_N^x}>\delta \big] \;\le\; 
\limsup_{N\to\infty} \sup_{\eta\in \ms E^x_N} 
\bb P_\eta \big[ H_{\ms B_N^x}^{\ms E_N^x}>\frac{\delta}{2} \big]\; .
\end{equation}
The reverse implication is trivial, since $H_{\ms B_N^x}\ge
H_{\ms B_N^x}^{\ms E_N^x},$ pointwise.
\end{proof}

\begin{corollary}
\label{l07}
Assume that conditions (H1), (H2), (M1), (M2) and \eqref{b09a} are in force. 
Then, condition \eqref{03} is satisfied. In particular, under the assumptions of Proposition \ref{p02}, the finite-dimensional
distributions of the projected chain $X_N(t)$ converge, as $N\to\infty$,
to the finite-dimensional distributions of the Markov chain $\bs X(t)$
appearing in condition (H1).
\end{corollary}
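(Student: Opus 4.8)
The plan is to derive \eqref{03} by checking the two hypotheses of Lemma \ref{l04} for the sets $\ms B^x_N$ furnished by (M1)--(M2), and then to conclude via Proposition \ref{l01}. Concretely, for each fixed $x\in S$ I must show (i) that $\ms B^x_N$ is reached quickly from every starting point in $\ms E^x_N$, i.e. $\sup_{\eta\in\ms E^x_N}\bb P_\eta[H_{\ms B^x_N}>\delta]\to 0$ for each $\delta>0$, and (ii) that, started from $\ms B^x_N$, the process avoids $\Delta_N$ at times of order $\delta$, i.e. the double limit in the second hypothesis of Lemma \ref{l04} vanishes.

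For (i) I would avoid bounding the honest hitting time $H_{\ms B^x_N}$ directly, since between $0$ and $H_{\ms B^x_N}$ the chain may wander outside $\ms E^x_N$; instead I pass to the trace on $\ms E^x_N$. Condition (M1) is precisely a statement about the trace hitting time $H^{\ms E^x_N}_{\ms B^x_N}$, and inequality \eqref{tracewilldo} of Lemma \ref{ltr}---which is exactly where (H1) and (H2) are consumed---converts control of $H^{\ms E^x_N}_{\ms B^x_N}$ at level $\delta/2$ into control of $H_{\ms B^x_N}$ at level $\delta$. Applying \eqref{tracewilldo} and then (M1) with $\delta/2$ in place of $\delta$ yields hypothesis (i).

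For (ii) I would invoke Lemma \ref{l06} with $T=\varepsilon_N$; the intermediate parameter it requires may be chosen in $(\varepsilon_N,\delta)$ once $N$ is large, because $\varepsilon_N\ll 1$ while $s'\ge\delta$. For $\eta\in\ms B^x_N$ this bounds $\bb P_\eta[\xi^N(s')\in\Delta_N]$---note the bound is independent of $s'$, so the supremum over $s'\in[\delta,3\delta]$ is free---by the sum of $\bb P_\eta[H_{(\ms E^x_N)^c}\le\varepsilon_N]$, the reflected mixing error $\Vert\delta_\eta\,\mathcal{S}^{R,x}_N(\varepsilon_N)-\mu^x_N\Vert_{\rm TV}$, and $\mu_N(\Delta_N)/\mu_N(\ms E^x_N)$. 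The last two terms tend to $0$ uniformly over $\ms B^x_N$ by \eqref{b07} and by \eqref{b09a} (with $y=x$), respectively. For the first term I would use that $\Delta_N$ separates the wells: a chain started in $\ms E^x_N$ can leave $\ms E^x_N$ only by entering $\Delta_N$, so $H_{(\ms E^x_N)^c}=H_{\Delta_N}$ and hence $\bb P_\eta[H_{(\ms E^x_N)^c}\le\varepsilon_N]\le\sup_{\eta\in\ms B^x_N}\bb P_\eta[H_{\Delta_N}\le 2\varepsilon_N]$, which vanishes by \eqref{b04}. Since the whole bound already tends to $0$ as $N\to\infty$ for each fixed $\delta$, the outer $\delta\to 0$ limit in hypothesis (ii) is automatic.

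With (i) and (ii) in hand, Lemma \ref{l04} delivers \eqref{03}, and the final assertion follows at once: the hypotheses of Proposition \ref{p02} contain (H1), (H2), (M1), (M2) and \eqref{b09a}, so \eqref{03} holds and Proposition \ref{l01} gives the convergence of the finite-dimensional distributions of $X_N(t)$ to those of $\bs X(t)$. I expect the only real subtlety to be the handling of $\bb P_\eta[H_{(\ms E^x_N)^c}\le\varepsilon_N]$: one must use the structural fact that the wells are separated by $\Delta_N$ in order to replace the exit time from $\ms E^x_N$ by the hitting time of $\Delta_N$ and thereby bring (M2) to bear, while keeping every estimate uniform over the initial set $\ms B^x_N$.
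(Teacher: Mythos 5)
Your route coincides with the paper's own proof in its architecture: verify the two hypotheses of Lemma \ref{l04} for the sets $\ms B^x_N$ supplied by (M1)--(M2) --- the first through inequality \eqref{tracewilldo} of Lemma \ref{ltr} (which is where (H1) and (H2) are used) combined with (M1), the second through Lemma \ref{l06} with $T=\varepsilon_N$ combined with \eqref{b07} and \eqref{b09a} --- and then conclude with Proposition \ref{l01}. All of these steps are correct, and the observations you make along the way (that Lemma \ref{l06}'s bound is independent of $s'$, so the supremum over $s'\in[\delta,3\delta]$ costs nothing, and that $\varepsilon_N<\delta$ eventually, so the lemma applies) are accurate.

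There is, however, one genuine flaw, precisely at the point you flagged as ``the only real subtlety.'' The identity $H_{(\ms E^x_N)^c}=H_{\Delta_N}$, which you justify by the claim that the chain can leave $\ms E^x_N$ only by entering $\Delta_N$, is \emph{not} among the hypotheses of the corollary. In the abstract framework, $E_N=\ms E^1_N\sqcup\cdots\sqcup\ms E^{\mf n}_N\sqcup\Delta_N$ is merely a partition; the description of $\Delta_N$ as ``the set which separates the wells'' is heuristic, and nothing forbids positive jump rates directly from $\ms E^x_N$ into $\breve{\ms E}^x_N$. Since $\Delta_N\subset(\ms E^x_N)^c$, one has $H_{(\ms E^x_N)^c}\le H_{\Delta_N}$ pointwise, so \eqref{b04} controls the \emph{wrong side}: a direct jump to another well could make $\bb P_\eta[H_{(\ms E^x_N)^c}\le\varepsilon_N]$ close to $1$ while $\bb P_\eta[H_{\Delta_N}\le 2\varepsilon_N]$ remains small, and your bound would fail. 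The repair stays within the hypotheses and is the same device used in the proof of Lemma \ref{ltr}: decompose $\{H_{(\ms E^x_N)^c}\le\varepsilon_N\}\subseteq\{H_{\Delta_N}\le\varepsilon_N\}\cup\{H^{\ms E_N}_{\breve{\ms E}^x_N}\le\varepsilon_N\}$, note that the second event coincides with $\{X^T_N(s)\not=x\text{ for some }s\le\varepsilon_N\}$, and kill its probability (uniformly over $\eta\in\ms E^x_N$, via a maximizing-sequence argument) by assumption (H1), since $\varepsilon_N\to 0$ and the limit chain $\bs X$ almost surely does not jump instantaneously. With that substitution your argument is complete and matches the paper's proof, whose one-line justification cites exactly Lemmas \ref{l04}, \ref{l06} and \ref{ltr} with $\eta\in\ms B^x_N$.
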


\begin{proof}[Proof of Corollary \ref{l07}]
The first assertion of the corollary is a straightforward consequence of the
assumptions and Lemma \ref{l04}, Lemma \ref{l06}, Lemma \ref{ltr} with $\eta\in\ms
B_N^x$. The second assertion follows from Proposition \ref{l01}.
\end{proof}

Denote by $\lambda_N(\eta)$, $\eta\in E_N$, the holding rates of the
Markov chain $\xi^N(t)$.  For two disjoint subsets $\ms A$, $\ms B$ of
$E_N$, denote by $\Cap_N(\ms A, \ms B)$ the capacity between $\ms A$ and
$\ms B$:
\begin{equation}
\label{l3-19}
\Cap_N(\ms A, \ms B) \;=\; \sum_{\eta\in \ms A} \mu_N(\eta)\, \lambda_N(\eta) 
\, \bb P_{\eta} [H_{\ms B} < H^+_{\ms A}]\;.
\end{equation}
Similarly, for two disjoint subsets $\ms A$, $\ms B$ of $\ms E^x_N$ we
represent by $\Cap_{N,x}(\ms A, \ms B)$ the capacity between $\ms A$
and $\ms B$ for the trace process $\xi^N_{T,x}(t)$:
\begin{equation*}
\Cap_{N,x}(\ms A, \ms B) \;=\; \sum_{\eta\in \ms A} \mu^x_N(\eta)\, \lambda^{T,x}_N(\eta) 
\, \bb P_{\eta} [H_{\ms B} < H^+_{\ms A}]\;,
\end{equation*}
where $\lambda^{T,x}_N(\eta)$ stands for the holding rates of the trace
process $\xi^N_{T,x}(t)$.

The following lemma offers sufficient conditions for (M1), in terms of mixing time or capacity estimates. 
In view of Lemma \ref{ltr}, together with (H1) and (H2) these conditions also imply that
\begin{equation}
\label{hitbottom}
\lim_{N\to\infty} \sup_{\eta\in \ms E^x_N}
\bb P_\eta [ H_{\ms B_N^x}>\delta]=0,\quad \forall \delta>0.
\end{equation}
\begin{lemma}
\label{l06a}
Let $T^{N,T,x}_{\rm mix}$ represent the
$\big(\frac{1}{2e}\big)$-mixing time of the trace process on $\ms
E_N^x$. If, for every $x\in S$ either
\begin{equation}
\label{capest}
\lim_{N\to\infty} \sup_{\eta\in \ms E_N^x\setminus \ms B_N^x} 
\frac{\mu_N(\ms E_N^x \setminus \ms B_N^x)}
{\Cap_N(\eta,\ms B_N^x)} \;=\; 0 \;,
\end{equation}
or
\begin{equation}
\label{tracemixing2}
\lim_{N\to\infty} \frac{T^{N,T,x}_{\rm mix}}{\mu_N^x(\ms B_N^x)}
\, \Big(1+\ln\big(\frac{1}{\mu_N^x(\ms B_N^x)}\big)\Big) \;=\; 0\,
\end{equation}
are satisfied, then (M1) holds. If $\{\eta^N(t): t\ge
0\}$ is reversible, the logarithmic term in \eqref{tracemixing2} can
be dropped.
\end{lemma}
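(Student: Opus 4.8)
The plan is to establish \eqref{b02}, that is, condition (M1), directly for the trace process on $\ms E^x_N$, treating the two hypotheses \eqref{capest} and \eqref{tracemixing2} separately. Throughout, the relevant chain is the trace process $\xi^N_{T,x}(t)$, which is irreducible with stationary measure $\mu^x_N$, and the quantity to be controlled is $\bb P_\eta[ H^{\ms E^x_N}_{\ms B^x_N} > \delta]$, uniformly over $\eta\in\ms E^x_N$ and for each fixed $\delta>0$.

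\emph{The capacity route.} Assume \eqref{capest}. By Markov's inequality it suffices to show that $\sup_{\eta}\bb E_\eta[ H^{\ms E^x_N}_{\ms B^x_N}]\to 0$, since $\bb P_\eta[H^{\ms E^x_N}_{\ms B^x_N}>\delta]\le \delta^{-1}\bb E_\eta[H^{\ms E^x_N}_{\ms B^x_N}]$. First I would invoke the standard representation of the mean hitting time in terms of the equilibrium potential: for the trace chain, $\bb E_\eta[H^{\ms E^x_N}_{\ms B^x_N}] = \Cap_{N,x}(\eta,\ms B^x_N)^{-1}\sum_{\zeta} \mu^x_N(\zeta)\, h^*(\zeta)$, where $h^*$ is the equilibrium potential of the adjoint trace chain between $\{\eta\}$ and $\ms B^x_N$ (in the reversible case $h^*$ is the usual equilibrium potential). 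Since $0\le h^*\le 1$ and $h^*$ vanishes on $\ms B^x_N$, this yields $\bb E_\eta[H^{\ms E^x_N}_{\ms B^x_N}] \le \mu^x_N(\ms E^x_N\setminus \ms B^x_N)/\Cap_{N,x}(\eta,\ms B^x_N)$. The key step is then to pass from the trace capacity $\Cap_{N,x}$ to the full-chain capacity $\Cap_N$ of \eqref{capest}. Because the trace operation preserves capacities between subsets of $\ms E^x_N$, and $\Cap_{N,x}$ is computed with respect to the normalized measure $\mu^x_N = \mu_N/\mu_N(\ms E^x_N)$, one has $\Cap_{N,x}(\eta,\ms B^x_N) = \Cap_N(\eta,\ms B^x_N)/\mu_N(\ms E^x_N)$, while $\mu^x_N(\ms E^x_N\setminus\ms B^x_N) = \mu_N(\ms E^x_N\setminus\ms B^x_N)/\mu_N(\ms E^x_N)$. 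The two normalizations cancel, giving
\begin{equation*}
\bb E_\eta[ H^{\ms E^x_N}_{\ms B^x_N}] \;\le\; \frac{\mu_N(\ms E^x_N\setminus \ms B^x_N)}{\Cap_N(\eta,\ms B^x_N)}\;,
\end{equation*}
whose supremum over $\eta\in\ms E^x_N\setminus\ms B^x_N$ tends to $0$ by \eqref{capest}. This proves (M1) under \eqref{capest}.

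\emph{The mixing route.} Assume \eqref{tracemixing2}. Here I would argue by blocks. Using submultiplicativity of the total-variation distance to equilibrium, the distance of the trace chain to $\mu^x_N$ drops below $\tfrac12\,\mu^x_N(\ms B^x_N)$ after a time $t_N := T^{N,T,x}_{\rm mix}\,\big(1+\lceil\log(2/\mu^x_N(\ms B^x_N))\rceil\big)$, uniformly over the starting state. Consequently, for every starting configuration, $\bb P_\eta[\xi^N_{T,x}(t_N)\in\ms B^x_N]\ge \tfrac12\,\mu^x_N(\ms B^x_N)$, and hence $\bb P_\eta[H^{\ms E^x_N}_{\ms B^x_N}\le t_N]\ge \tfrac12\,\mu^x_N(\ms B^x_N)$. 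Partitioning $[0,\delta]$ into $\lfloor \delta/t_N\rfloor$ successive blocks of length $t_N$ and applying the Markov property at the block endpoints (the per-block bound being uniform in the starting state), one obtains
\begin{equation*}
\bb P_\eta[ H^{\ms E^x_N}_{\ms B^x_N} > \delta ] \;\le\; \Big(1-\tfrac12\,\mu^x_N(\ms B^x_N)\Big)^{\lfloor \delta/t_N\rfloor} \;\le\; \exp\Big\{-\tfrac12\,\mu^x_N(\ms B^x_N)\,\big\lfloor \delta/t_N\big\rfloor\Big\}\;.
\end{equation*}
Since $\mu^x_N(\ms B^x_N)\,\delta/t_N$ is, up to the constant $\delta$, the reciprocal of the expression in \eqref{tracemixing2}, the right-hand side tends to $0$, which is (M1). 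In the reversible case the logarithmic boosting used to reach the threshold $\tfrac12\,\mu^x_N(\ms B^x_N)$ can be replaced by a direct $L^2$ (spectral) contraction estimate on $\bb P_\eta[\xi^N_{T,x}(t)\in\ms B^x_N]-\mu^x_N(\ms B^x_N)$, which does not incur the factor $1+\log(1/\mu^x_N(\ms B^x_N))$; this is why the logarithm may be dropped from \eqref{tracemixing2}. Finally, (M1) together with (H1), (H2) and Lemma \ref{ltr} yields \eqref{hitbottom}.

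The main obstacle I anticipate lies in the capacity route: obtaining the sharp mean–hitting–time bound $\bb E_\eta[H^{\ms E^x_N}_{\ms B^x_N}]\le \mu^x_N(\ms E^x_N\setminus\ms B^x_N)/\Cap_{N,x}(\eta,\ms B^x_N)$ for the possibly non-reversible trace chain via the adjoint equilibrium potential, and then carrying out correctly the bookkeeping of the normalization $\mu_N(\ms E^x_N)$ that converts the trace capacity $\Cap_{N,x}$ into the full-chain capacity $\Cap_N$ appearing in \eqref{capest}. In the mixing route the only subtle point is the logarithmic amplification to the scale $\mu^x_N(\ms B^x_N)$ and its removal under reversibility.
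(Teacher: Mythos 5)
Your two main branches are correct and essentially reproduce the paper's own proof. Under \eqref{capest}, the paper cites \cite[Proposition A.2]{bl4} for the bound $\bb E_\eta\big[H^{\ms E^x_N}_{\ms B^x_N}\big]\le \mu^x_N(\ms E^x_N\setminus\ms B^x_N)/\Cap_{N,x}(\eta,\ms B^x_N)$ and identity (A.10) there for $\Cap_{N,x}(\eta,\ms B^x_N)=\Cap_N(\eta,\ms B^x_N)/\mu_N(\ms E^x_N)$ --- exactly the two facts you sketch, including the use of the adjoint equilibrium potential in the non-reversible case and the cancellation of the normalization $\mu_N(\ms E^x_N)$ --- and then concludes by Markov's inequality. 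Under \eqref{tracemixing2}, the paper's argument is your block iteration verbatim: it chooses $\vartheta_N=\big(1+\ln(1/\mu^x_N(\ms B^x_N))\big)\,T^{N,T,x}_{\rm mix}$, derives the uniform one-block estimate $\bb P_\eta\big[H^{\ms E^x_N}_{\ms B^x_N}>\vartheta_N\big]\le 1-\tfrac12\mu^x_N(\ms B^x_N)$, and iterates by the Markov property to $\big(1-\tfrac12\mu^x_N(\ms B^x_N)\big)^{[\delta/\vartheta_N]}$, which vanishes precisely under \eqref{tracemixing2}.

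The gap is in the final, reversible assertion. The paper removes the logarithm by invoking Theorem 5 of \cite{aldous}: for reversible chains, $\mu^x_N(\ms B^x_N)\,\bb E_\eta\big[H^{\ms E^x_N}_{\ms B^x_N}\big]\le C_0\, T^{N,T,x}_{\rm mix}$ with a universal constant $C_0$, after which Markov's inequality gives (M1) without any logarithmic factor. Your proposed substitute --- a ``direct $L^2$ spectral contraction estimate on $\bb P_\eta[\xi^N_{T,x}(t)\in\ms B^x_N]-\mu^x_N(\ms B^x_N)$'' --- does not deliver this as stated: the Cauchy--Schwarz/spectral bound that is uniform in the starting point reads $\big|\bb P_\eta[\xi^N_{T,x}(t)\in\ms B^x_N]-\mu^x_N(\ms B^x_N)\big|\le e^{-t/T^{N,T,x}_{\rm rel}}\,\mu^x_N(\ms B^x_N)^{1/2}\,\mu^x_N(\eta)^{-1/2}$, so forcing it below $\tfrac12\mu^x_N(\ms B^x_N)$ uniformly over $\eta\in\ms E^x_N$ requires $t$ of order $T^{N,T,x}_{\rm rel}\,\log\big(1/(\mu^x_N(\ms B^x_N)\,\min_\eta\mu^x_N(\eta))\big)$: a logarithm reappears, and the $\min_\eta\mu^x_N(\eta)$ term can be much worse than the $\log(1/\mu^x_N(\ms B^x_N))$ you are trying to eliminate. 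A genuinely log-free statement needs a hitting-time--specific reversible input --- for instance $\bb E_{\mu^x_N}\big[H_{\ms B^x_N}\big]\le T^{N,T,x}_{\rm rel}\,(1-\mu^x_N(\ms B^x_N))/\mu^x_N(\ms B^x_N)$ combined with a one-mixing-time regeneration step, which is essentially the content of Aldous's theorem. Citing that theorem, as the paper does, is the shortest way to close this gap; the rest of your proof stands.
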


\begin{proof}

To prove the assertion of the lemma under the assumption
\eqref{capest}, note that by Proposition A.2 in \cite{bl4},
\begin{align*}
\bb E_{\eta} \big[ H_{\ms B_N^x}^{\ms E_N^x} \big] \;\le\; 
\frac{\mu_N^x(\ms E_N^x\setminus \ms B_N^x)}{\Cap_{N,x}(\eta,\ms B_N^x)}
\;=\; \frac{\mu_N(\ms E_N^x\setminus\ms B_N^x)}{\mu_N(\ms E_N^x)
\Cap_{N,x}(\eta,\ms B_N^x)}
\;=\; \frac{\mu_N(\ms E_N^x\setminus\ms B_N^x)}{\Cap_N(\eta,\ms B_N^x)}\;,
\end{align*}
where the last equality follows from identity (A.10) in \cite{bl4}.

Assume, now, that \eqref{tracemixing2} is in force. The following
argument is inspired by Theorem 6 in \cite{aldous} and Theorem 1.1 in
\cite{peressousi}. We include it here for completeness.  Recall from
\eqref{204} that we denote by $\mathcal{S}^{T,x}_N(t)$ the semigroup of the trace
process on $\ms E^x_N$. Pick a time-scale $(\vartheta_N : N\ge 1)$
such that
\begin{equation*}
\sup_{\eta\in\ms E_N^x}\Vert \delta_{\eta}\mathcal{S}^{T,x}_N(\vartheta_N)-
\mu_N^x\Vert_{\rm TV} < \mu_N^x(\ms B_N^x)/2.
\end{equation*}
We may choose, for example,
\begin{equation}
\label{206}
\vartheta_N \;=\; 
\Big(1+\ln\big(\frac{1}{\mu_N^x(\ms B_N^x)}\big)\Big)\,
T^{N,T,x}_{\rm mix} \;.
\end{equation}
Recall that we denote by $\xi^N_{T,x}(t)$ the trace of the Markov
chain $\xi^N(t)$ on $\ms E^x_N$. For any $\eta\in\ms E_N^x$, by
definition of $\vartheta_N$
\begin{align*}
& \bb P_\eta [ H_{\ms B_N^x}^{\ms E_N^x}>\vartheta_N] \;\le\;
\bb P_\eta [ \xi^N_{T,x} (\vartheta_N)\notin \ms B_N^x] \\
& \qquad
\le\; \Vert \delta_{\eta}\mathcal{S}^{T,x}_N (\vartheta_N)-\mu_N^x\Vert_{\rm TV} 
\;+\; \mu_N^x(\ms E_N^x\setminus\ms B_N^x)
\;\le\; 1 \,-\, \frac{\mu_N^x(\ms B_N^x)}{2}\;\cdot
\end{align*}
Since this estimate is uniform in $\eta$, we may iterate it, using the
Markov property, to get
\begin{equation}
\label{iterated}
\bb P_\eta [ H_{\ms B_N^x}^{\ms E_N^x}>\delta] \; \le\;
\Big(1-\frac{\mu_N^x(\ms B_N^x)}{2}\Big)^{\left[\frac{\delta}{\vartheta_N}\right]}\;.
\end{equation}
This expression vanishes, as $N\to \infty$, if \eqref{tracemixing2} is
satisfied and if we choose $\vartheta_N$ according to \eqref{206}.

Finally, if the process is reversible, by Theorem 5 in \cite{aldous},
there exists a finite universal constant $C_0$ such that
\begin{equation*}
\mu_N^x(\ms B_N^x)\, \bb E_{\eta} \big[ H_{\ms B_N^x}^{\ms
  E_N^x} \big] \;\le\; C_0\, T^{N,T,x}_{\rm mix}\;.
\end{equation*}
Hence, \eqref{hitbottom} follows from (\ref{tracewilldo}) by Markov's
inequality.
\end{proof}

The preceding lemma evidences the importance of an upper bound for the
mixing time of the trace process. This is the content of Remark
\ref{rm02} below.

Denote by $R_N(\eta,\xi)$, $\eta$, $\xi\in E_N$, the jump rates of the
Markov chain $\xi^N(t)$, and by $R^{T,x}_N(\eta',\xi')$, $\eta'$,
$\xi'\in \ms E^x_N$, the jump rates of the trace process
$\xi^N_{T,x}(t)$. Assume that the Markov chain $\xi^N(t)$ is
reversible and denote by $D_N$, $D_{N,T,x}$ the Dirichlet form of the
processes $\xi^N(t)$, $\xi^N_{T,x}(t)$, respectively:
\begin{equation}
\label{l3-04}
\begin{aligned}
& D_N(f) \;=\; \frac 12 \sum_{\eta,\xi\in E_N} \mu_N(\eta)\,
R_N(\eta,\xi)\, [f(\xi) - f(\eta)]^2\;, \\
&\qquad  D_{N,T,x}(g) \;=\; \frac 12 \sum_{\eta,\xi\in \ms E^x_N} \mu^x_N(\eta)\,
R^{T,x}_N (\eta,\xi)\, [g(\xi) - g(\eta)]^2\;,
\end{aligned}
\end{equation}
for functions $f:E_N \to \bb R$, $g:\ms E^x_N \to \bb R$. By replacing,
in the first line of the previous formula, the measure $\mu_N$ by the
conditioned measure $\mu^x_N$, and by restricting the sum to
configurations $\eta,\xi\in \ms E^x_N$, we obtain the Dirichlet form
of the reflected process, denoted by $D_{N,R,x}(f)$.

Denote by $T^{N,T,x}_{\rm rel}$, $T^{N,R,x}_{\rm rel}$ the relaxation
times of the trace process $\xi^N_{T,x}(t)$, the reflected process
$\xi^N_{R,x}(t)$, respectively:  
\begin{equation*}
T^{N,T,x}_{\rm rel} \;=\; \sup_g \frac{\Vert
  g\Vert^2_{\mu^x_N}}{D_{N,T,x}(g)} \;, \quad
T^{N,R,x}_{\rm rel} \;=\; \sup_g \frac{\Vert
  g\Vert^2_{\mu^x_N}}{D_{N,R,x}(g)} \;,
\end{equation*}
where the supremum is carried over all functions $g:\ms E^x_N \to \bb
R$ with mean zero with respect to $\mu^x_N$, and $\Vert
g\Vert_{\mu^x_N}$ represents the $L^2(\mu^x_N)$ norm of $g$: $\Vert
g\Vert^2_{\mu^x_N} = \sum_{\eta\in \ms E^x_N} \mu^x_N(\eta) \,
g(\eta)^2$. 

\begin{remark}
\label{rm02}
Obtaining estimates for the mixing time $T^{N,T,x}_{\rm mix}$ of the
trace process on the well $\ms E_N^x$ is often not harder than doing
so for the mixing time $T_{\rm mix}^{N,R,x}$ of the reflected process
on the well. Both processes have the same invariant measure $\mu_N^x$
and the former has higher jump rates. Indeed, by \cite[Corollary
6.2]{bl2}, for any $\eta,\zeta\in\ms E_N^x$, $\eta\not =\zeta$,
\begin{equation*}
R^{T,x}_N(\eta,\zeta) \;=\; R_N(\eta,\zeta) \;+\;
\sum_{\xi\notin \ms E_N^x} R_N(\eta,\xi) \,
\bb{P}_\xi\big[\xi^N\big(H_{\ms E_N^x}\big) =\zeta\big]
\;\ge\; R_N(\eta,\zeta)\;.
\end{equation*}
Hence, the Dirichlet form corresponding to the trace on $\ms E_N^x$
dominates the Dirichlet form corresponding to the reflected process on
$\ms E_N^x$ and, consequently, the relaxation time $T_{\text{\rm
    rel}}^{N,T,x}$ of the former is smaller than the relaxation
time $T_{\text{\rm rel}}^{R,x}$ of the latter. Then, by the proof of
\cite[Theorem 12.3]{lpw},
\begin{equation}
T^{N,T,x}_{\rm mix} \;\le\; T^{N,T,x}_{\rm rel}  \,
\Big(1+\sup_{\eta\in\ms E_N^x}\log\big(\frac{1}{\mu_N^x(\eta)}\big)\Big)
\;\le\; T^{N,R,x}_{\rm rel} \Big(1+\sup_{\eta\in\ms E_N^x}
\log\big(\frac{1}{\mu_N^x(\eta)}\big)\Big)\;.
\label{tracemixestimate}
\end{equation}
The right hand side of the preceding inequality, which is often used
as an upper bound for the mixing time $T_{\text{mix}}^{N,R,x}$ of the
chain $\xi^N(\cdot)$ restricted in the well $\ms E_N^x$, is also a
bound for the mixing time of the trace process.
\end{remark}

\begin{remark}
\label{rm03}
In many interesting cases, e.g. random walks on a potential field
\cite{begk01, lmt2015, lseo2016a, lseo2016b} or condensing zero-range
processes \cite{bl2012a, l2014}, the set $\ms B_N^x$ may be taken as a
singleton.
\end{remark}

\section{Convergence of the state}
\label{sec03}
In this section, we prove Proposition \ref{p02}. From now on, we
assume that the number of valleys is fixed and that the sequence of
Markov chains fulfills conditions (H1), (H2), (M1), (M2) and \eqref{b09a}.

\begin{proof}[{\bf Proof of Proposition \ref{p02}}] We will prove the assertion for $k=1$.
The general case follows easily from Corollary \ref{l07} and the Markov property.
The proof is divided in several steps. At each stage we write the main
expression as the sum of a simpler one and a negligible remainder.

Fix $t>0$, $x \in S$, a sequence $\{\eta^N: N\ge 1\}$, $\eta^N \in
\ms E^x_N$, and $0<\delta<t$. By definition, $[\delta_{\eta^N}
S^N(t)](\xi)  = \bb P_{\eta^N}[\xi^N(t) = \xi]$ can be written as
\begin{equation}
\label{b01}
\sum_{y\in S} \sum_{\eta\in \ms E^y_N} 
\bb P_{\eta^N}[\xi^N(t-\delta) = \eta \,,\, \xi^N(t) = \xi] \;+\;
R^{(1)}_N(t,\delta,\xi) \;, 
\end{equation}
where
\begin{equation*}
R^{(1)}_N(t,\delta,\xi) \;=\;
\sum_{\eta\in \Delta_N} \bb P_{\eta^N}[\xi^N(t-\delta) = \eta \,,\,
\xi^N(t) = \xi]\;. 
\end{equation*}
By Corollary \ref{l07}, for every $0<\delta<t$,
\begin{equation*}
\lim_{N\to\infty} \sum_{\xi\in E_N} R^{(1)}_N(t,\delta,\xi) \;=\;
\lim_{N\to\infty} \bb P_{\eta^N}[ X_N (t-\delta) = 0] \;=\; 0\;.
\end{equation*}

By the Markov property, the sum appearing in \eqref{b01} is equal to 
\begin{align*}
\sum_{y\in S} \sum_{\eta\in \ms E^y_N} 
\bb P_{\eta^N}[\xi^N(t-\delta) = \eta] \,  
\bb P_{\eta}[\xi^N(\delta) = \xi] \;.
\end{align*}
Let $p(\eta) = \bb P_{\eta^N}[\xi^N(t-\delta) = \eta]$. We may rewrite
this expression as
\begin{equation}
\label{b03}
\sum_{y\in S} \sum_{\eta\in \ms E^y_N} p(\eta)\,
\bb P_{\eta}[H_{\ms B_N^y} \le \frac{\delta}{2} \,,\, \xi^N(\delta) = \xi]
\;+\; R^{(2)}_N(t,\delta,\xi)\;,
\end{equation}
where 
\begin{equation*}
\sum_{\xi\in E_N} R^{(2)}_N(t,\delta,\xi) 
\;\le\; \max_{y\in S} \max_{\eta\in \ms E^y_N}
\bb P_{\eta} \big[  H_{\ms B_N^y} > \frac{\delta}{2} \big].
\end{equation*}
By \eqref{tracewilldo} and condition (M1),
\begin{equation*}
\lim_{N\to\infty} \sum_{\xi\in E_N} R^{(2)}_N(t,\delta,\xi) \;=\;0\;.
\end{equation*}

By the strong Markov property, using the notation $\xi_{\ms
  B}^N=\xi^N(H_{\ms B_N^y})$, the first term in \eqref{b03} is equal
to
\begin{equation*}
\sum_{y\in S} \sum_{\eta\in \ms E^y_N} 
p(\eta) \,  
\bb E_{\eta} \Big[ \bs {1}\{ H_{\ms B_N^y} \le \frac{\delta}{2}\}\,
\bb P_{\xi^N_{\ms B}} [\xi^N(\delta - H_{\ms B_N^y}) = \xi]\, \Big] \;.
\end{equation*}
Let us now define $A_\xi = \{\xi^N(\delta - H_{\ms B_N^y}) = \xi\}$,
$B_N= \{ H_{\ms B_N^y} \le \frac{\delta}{2}\}$ and recall the
definition of the time-scale $\varepsilon_N$ introduced in
condition (M2). Rewrite the previous sum as
\begin{equation}
\label{b05}
\sum_{y\in S} \sum_{\eta\in \ms E^y_N} p(\eta) \,  
\bb E_{\eta} \Big[ \bs {1} \{ B_N \}\,
\bb P_{\xi^N_{\ms B}} [ H_{\Delta_N} > \varepsilon_N \,,\, A_\xi]\, \Big] 
\;+\; R^{(3)}_N(t,\delta,\xi)\;,
\end{equation}
where 
\begin{align*}
0\le\sum_{\xi\in E_N} R^{(3)}_N(t,\delta,\xi) 
\; &=\; \sum_{y\in S} \sum_{\eta\in \ms E^y_N} p(\eta) \,  
\bb E_{\eta} \Big[ \bs {1} \{ B_N \}\,
\bb P_{\xi^N_{\ms B}} [ H_{\Delta_N} \le \varepsilon_N \,]\, \Big]\\
&\le\;  \max_{y\in S} 
\,\sup_{\eta\in\ms B_N^y}
\bb P_\eta [ H_{\Delta_N} \le \varepsilon_N] \;.
\end{align*}
By \eqref{b04}, this latter expression vanishes as $N\to\infty$.

By the Markov property, the sum appearing in \eqref{b05} is equal to
\begin{equation*}
\sum_{y\in S} \sum_{\eta\in \ms E^y_N} p(\eta) \,  
\bb E_{\eta} \Big[ \bs {1} \{B_N\}\,
\bb E_{\xi^N_{\ms B}} \Big[ \bs {1}\{ H_{\Delta_N} > \varepsilon_N \}\, 
\bb P_{\xi^N(\varepsilon_N)} [A'_\xi] \,\Big]\, \Big] \;,
\end{equation*}
where $A'_\xi = \{\xi^N(\delta - H_{\ms B_N^y} - \varepsilon_N) = \xi\}$.
On the set $\{ H_{\Delta_N} > \varepsilon_N \}$, we may replace the chain
$\xi^N(t)$ by the reflected chain at $\ms E^y_N$, denoted by
$\xi^N_{R,y}(t)$. The previous expression is thus equal to
\begin{equation*}
\sum_{y\in S} \sum_{\eta\in \ms E^y_N} p(\eta) \,  
\bb E_{\eta} \Big[ \bs {1} \{B_N\}\,
\bb E_{\xi^N_{\ms B}} \Big[ \mb 1\{ H_{\Delta_N} > \varepsilon_N \}\, 
\bb P_{\xi^N_{R,y}(\varepsilon_N)} [A'_\xi] \,\Big]\, \Big] \;.
\end{equation*}
This sum can be rewritten as
\begin{equation}
\label{b06}
\sum_{y\in S} \sum_{\eta\in \ms E^y_N} p(\eta) \,  
\bb E_{\eta} \Big[ \bs { 1} \{ B_N \}\,
\bb E_{\xi^N_{\ms B}} \Big[\bb P_{\xi^N_{R,y}(\varepsilon_N)} [A'_\xi]
\,\Big]\, \Big] \;-\; R^{(4)}_N(t,\delta,\xi)\;,
\end{equation}
where, by \eqref{b04} and a similar argument to the one following \eqref{b05}
\begin{equation*}
\lim_{N\to\infty} \sum_{\xi\in E_N} R^{(4)}_N(t,\delta,\xi) \;=\;0\;.
\end{equation*}

Since, for every $\eta\in \ms B_N^y,\, \xi\in E_N$,
\begin{equation*}
\bb E_{\eta} \Big[\bb P_{\xi^N_{R,y}(\varepsilon_N)}
[A'_\xi]\,\Big] \;=\; \bb P_{\mu^y_N} [A'_\xi] \;+\; 
\sum_{\zeta\in\ms E^y_N}
\big\{ \bb P_{\eta} \big[ \xi^N_{R,y}(\varepsilon_N) = \zeta \big] 
- \mu^y_N (\zeta) \big\} \, \bb P_{\zeta} [A'_\xi]\;,
\end{equation*}
the first term of \eqref{b06} is equal to
\begin{equation}
\label{b08}
\sum_{y\in S} \sum_{\eta\in \ms E^y_N} p(\eta) \, 
\bb E_{\eta} \Big[ \bs { 1} \{ B_N \}\,
\bb P_{\mu^y_N} [A'_\xi] \,\Big] \;+\; R^{(5)}_N(t,\delta,\xi)\;, 
\end{equation}
where the remainder $R^{(5)}_N(t,\delta,\xi)$ is given by
\begin{equation*}
\sum_{y\in S} \sum_{\eta\in \ms E^y_N}  \, p(\eta) \, 
\bb E_{\eta} \Big[ \bs {1} \{ B_N \} \, \sum_{\zeta\in\ms E^y_N}
\Big\{ \bb P_{\xi^N_{\ms B}} \big[ \xi^N_{R,y}(\varepsilon_N) = \zeta \big] 
- \mu^y_N (\zeta) \Big\}\,
\bb P_{\zeta} [A'_\xi] \,\Big]\;.
\end{equation*}
Therefore,
\begin{equation*}
\sum_{\xi\in E_N} \big| R^{(5)}_N(t,\delta,\xi) \big| \;\le\;
2\,\max_{y\in S}\sup_{\eta\in\ms B_N^y} \Vert \delta_{\eta} S^{R,y}_N(\varepsilon_N) 
- \mu^y_N \Vert_{\rm TV}\;,
\end{equation*}
so that, by \eqref{b07},
\begin{equation*}
\lim_{N\to\infty} \sum_{\xi\in E_N} \big| R^{(5)}_N(t,\delta,\xi)
\big| \;=\; 0\;.
\end{equation*}

The first term in \eqref{b08} can be written as
\begin{equation}
\label{b10}
\sum_{y\in S} \sum_{\eta\in \ms E^y_N} p(\eta) \, 
\bb E_{\eta} \Big[ \bs { 1} \{ B_N \}\,
\bb P_{\mu^y_N} [A'_\xi] \,\Big]  \, \bs{1}\{\xi \in \ms E^y_N\}
\;+\; R^{(6)}_N(t,\delta,\xi)\;,  
\end{equation}
where
\begin{equation*}
R^{(6)}_N(t,\delta,\xi) \;=\;
\sum_{y\in S} \sum_{\eta\in \ms E^y_N} p(\eta) \, 
\bb E_{\eta} \Big[ \bs{1} \{ B_N \}\,
\bb P_{\mu^y_N} [A'_\xi] \,\Big]  \, \bs {1}\{\xi \not\in \ms
E^y_N\} \;.
\end{equation*}
Therefore,
\begin{equation*}
\sum_{\xi\in E_N} R^{(6)}_N(t,\delta,\xi) \;\le\;
\sum_{y\in S} \sum_{\eta\in \ms E^y_N} p(\eta) \, 
\bb E_{\eta} \Big[ \bs {1} \{ B_N \}\,
\bb P_{\mu^y_N} [\xi^N(\delta - H_{{\ms B_N^y}} - \varepsilon_N) \not\in \ms
E^y_N] \,\Big]  \;.
\end{equation*}
The probability inside the expectation is less than or equal to
\begin{equation*}
\bb P_{\mu^y_N} [\xi^N(\delta - H_{\ms B_N^y} - \varepsilon_N) \in \Delta_N] \;+\;
\bb P_{\mu^y_N} [\xi^N(\delta - H_{\ms B_N^y} - \varepsilon_N) \in \breve{\ms
E}^y_N]\;,
\end{equation*}
where $\breve{\ms E}^y_N$ has been introduced in \eqref{17}.  Since
$\mu^y_N (\zeta) = \mu_N(\ms E^y_N)^{-1} \mu_N( \zeta \cap \ms E^y_N)
\le \mu_N(\ms E^y_N)^{-1} \mu_N( \zeta)$, the first term is bounded by
$\mu_N(\Delta)/\mu_N(\ms E^y_N)$. On the other hand, the second term
is less than or equal to
\begin{equation*}
\bb P_{\mu^y_N} [ \sup_{0\le s\le\delta} |X^T_N(s) - y| \ge 1] \;.
\end{equation*}
Therefore,
\begin{equation*}
\sum_{\xi\in E_N} R^{(6)}_N(t,\delta,\xi) \;\le\; 
\max_{y\in S} \Big\{ \frac {\mu_N(\Delta)}{\mu_N(\ms E^y_N)} \;+\;
\bb P_{\mu^y_N} [ \sup_{0\le s\le\delta} 
|X^T_N(s) - y| \ge 1] \Big\} \;,
\end{equation*}
and, by assumption (H1) and \eqref{b09},
\begin{equation*}
\lim_{\delta\to 0} \limsup_{N\to\infty} \sum_{\xi\in E_N}
R^{(6)}_N(t,\delta,\xi) \;=\;0\;.
\end{equation*}

Lemma \ref{3cond} below shows that the first term in \eqref{b10} is
equal to
\begin{equation}
\label{b11n}
\sum_{y\in S} \sum_{\eta\in \ms E^y_N} p(\eta) \, 
\bb P_{\eta} \Big [ H_{\ms B_N^y} \le \frac{\delta}{2} \big]  \, 
\mu^y_N(\xi) \;+\; R^{(7)}_N(t,\delta,x) \;, 
\end{equation} 
where
\begin{equation*}
\lim_{\delta\to 0} \limsup_{N\to\infty} \sum_{\xi\in E_N}
|R^{(7)}_N(t,\delta,x)| \;=\; 0\;.
\end{equation*}

We may rewrite the sum in \eqref{b11n} as
\begin{equation}
\label{b12}
\sum_{y\in S} \sum_{\eta\in \ms E^y_N} p(\eta) \, 
\mu^y_N(\xi) \;-\; R^{(8)}_N(t,\delta,\xi) \;,   
\end{equation}
where
\begin{equation*}
R^{(8)}_N(t,\delta,\xi) \;=\; \sum_{y\in S} \sum_{\eta\in \ms E^y_N} p(\eta) \, 
\bb P_{\eta} \Big [ H_{\ms B_N^y} > \frac{\delta}{2} \Big]  \, \mu^y_N(\xi) \;. 
\end{equation*}
By \eqref{tracewilldo} and condition (M1), for every $0<\delta<t$,
\begin{equation*}
\lim_{N\to\infty} \sum_{\xi\in E_N} R^{(8)}_N(t,\delta,\xi) \;\le\; 
\lim_{N\to\infty} \max_{y\in S} \sup_{\eta\in\ms E_N^y}
\bb P_{\eta} \big[ H_{\ms B_N^y} > \delta/2 \big] \;=\; 0\;.
\end{equation*}

In view of the definition of $p(\eta)$, the first term in \eqref{b12}
can be written as
\begin{equation*}
\sum_{y\in S} \bb P_{\eta^N}[ X_N(t) =y] \, 
\mu^y_N(\xi) \;+\; R^{(9)}_N(t,\delta,\xi)\;, 
\end{equation*}
where
\begin{equation*}
R^{(9)}_N(t,\delta,\xi) \;=\; \sum_{y\in S} 
\Big\{ \bb P_{\eta^N}[ X_N(t-\delta) =y] - \bb P_{\eta^N}[ X_N(t) =y] \Big\}\, 
\mu^y_N(\xi) \;.
\end{equation*}
Clearly, $\sum_{\xi \in E_N} |R^{(9)}_N(t,\delta,\xi)|$ is less than
or equal to
\begin{align*}
& \sum_{y\in S} 
\Big\{ \bb P_{\eta^N} \big[ X_N(t-\delta) =y \,,\, X_N(t) \not =y \big] \,+\,
\bb P_{\eta^N} \big[ X_N(t-\delta) \not = y \,,\,  X_N(t) =y \big] \Big\} \\
&\quad \le \; 2 \sum_{y\in S} 
\bb P_{\eta^N} \Big[ \sup_{|s-r|\le \delta} |X^T_N(r) - X^T_N(s)| \ge
1 \Big] \;+\; \sum_{u=t,t-\delta} \bb P_{\eta^N}[ X_N(u) =0] \;,
\end{align*}
where the supremum is carried over real numbers $r$, $s$ in $[0,t]$. 
By assumption (H1) and Corollary \ref{l07}, 
\begin{equation*}
\lim_{\delta\to 0} \limsup_{N\to\infty}  \sum_{\xi \in E_N}
|R^{(9)}_N(t,\delta,\xi)|  \;=\; 0\;. 
\end{equation*}

Up to this point we proved that
\begin{equation}
\label{b13}
[\delta_{\eta^N} S^N(t)](\xi) \;=\;
\sum_{y\in S} \bb P_{\eta^N}[ X_N(t) =y] \, 
\mu^y_N(\xi) \;+\; R_N(t,\delta,\xi)\;, 
\end{equation} 
where 
\begin{equation}
\label{b15}
\lim_{\delta\to 0} \limsup_{N\to\infty} 
\sum_{\xi\in E_N} \big| R_N(t,\delta,\xi) \big| \;=\; 0 \;.
\end{equation}
Therefore, in view of \eqref{b13},
\begin{align*}
& \big\Vert \delta_{\eta^N} S^N(t) - \sum_{y\in S}
\bs P_{x}[ \bs X(t) =y] \, \mu^y_N \big\Vert_{\rm TV} \\
&\quad \;=\;
\frac{1}{2}\sum_{\xi\in E_N} \big\vert \delta_{\eta^N} S^N(t) (\xi) - \sum_{y\in S}
\bs P_x [ \bs X(t) =y] \, \mu^y_N(\xi) 
\big\vert \\
&\quad \;\le\; \frac{1}{2}\sum_{y\in S} \big|\, 
\bs P_x [ \bs X(t) =y]  - \bb P_{\eta^N} [ X_N(t) =y] \, \big| 
\;+\; \frac{1}{2}\sum_{\xi\in E_N} \big| R_N(t,\delta,\xi) \big|\;,
\end{align*}
which completes the proof of the proposition, in view of \eqref{b15}
and Corollary \ref{l07}.
\end{proof}

\begin{lemma}
\label{3cond}
Under ({H1}), ({M1}), ({M2}), \eqref{b09a} and any of the assumptions
{\em (a), (b)} or {\em (c)} of Proposition \ref{p02}, for any $y\in S,
\, s\in(\delta/2,\delta)$ we have
\begin{equation*}
\lim_{\delta\to 0}\limsup_{N\to\infty}\sum_{\xi\in\ms E_N^y} \Big|\,
\bb P_{\mu_N^y}^N\big[ \xi^N(s)=\xi\big]-\mu_N^y(\xi)\Big|\;=\;0\;.
\end{equation*}
\end{lemma}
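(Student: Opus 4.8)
The plan is to use the invariance of $\mu_N$ under the full chain to collapse the total-variation--type sum into a single ``lost mass'' probability, and then to show separately that no mass escapes to $\Delta_N$ (by \eqref{b09a}) nor to the other wells (by (H1)). Write $p_s(\eta,\xi) = \bb P_\eta[\xi^N(s)=\xi]$. Since $\mu^y_N(\eta) = \mu_N(\eta)/\mu_N(\ms E^y_N)$ on $\ms E^y_N$ and vanishes elsewhere, for every $\xi\in\ms E^y_N$ the stationarity of $\mu_N$ gives
\[
\mu_N(\ms E^y_N)\,\bb P_{\mu^y_N}[\xi^N(s)=\xi] \;=\; \sum_{\eta\in\ms E^y_N}\mu_N(\eta)\,p_s(\eta,\xi) \;\le\; \sum_{\eta\in E_N}\mu_N(\eta)\,p_s(\eta,\xi) \;=\; \mu_N(\xi) \;=\; \mu_N(\ms E^y_N)\,\mu^y_N(\xi)\;,
\]
so that $\bb P_{\mu^y_N}[\xi^N(s)=\xi]\le\mu^y_N(\xi)$ on $\ms E^y_N$. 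The summand therefore has a fixed sign, and summing yields the clean identity
\[
\sum_{\xi\in\ms E^y_N}\big|\bb P_{\mu^y_N}[\xi^N(s)=\xi]-\mu^y_N(\xi)\big| \;=\; \sum_{\xi\in\ms E^y_N}\big(\mu^y_N(\xi)-\bb P_{\mu^y_N}[\xi^N(s)=\xi]\big) \;=\; \bb P_{\mu^y_N}\big[\xi^N(s)\notin\ms E^y_N\big]\;.
\]

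It remains to show that this escape probability is negligible in the double limit, uniformly for $s\le\delta$. I split $\{\xi^N(s)\notin\ms E^y_N\}$ into $\{\xi^N(s)\in\Delta_N\}$ and $\{\xi^N(s)\in\breve{\ms E}^y_N\}$. For the first, $\mu^y_N\le\mu_N/\mu_N(\ms E^y_N)$ and the stationarity of $\mu_N$ give $\bb P_{\mu^y_N}[\xi^N(s)\in\Delta_N]\le\mu_N(\Delta_N)/\mu_N(\ms E^y_N)$, which vanishes by \eqref{b09a}. For the second I pass to the trace process on $\ms E_N$: on this event $\xi^N(s)\in\ms E_N$, hence $X^T_N(T_N(s))=\Psi_N(\xi^N(s))\ne y$; since $X^T_N(0)=y$ and $T_N(s)\le s\le\delta$, we get the inclusion
\[
\{\xi^N(s)\in\breve{\ms E}^y_N\} \;\subseteq\; \Big\{\sup_{0\le u\le\delta}|X^T_N(u)-y|\ge 1\Big\}\;.
\]

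Finally I bound $\bb P_{\mu^y_N}$ of this last event by $\max_{\eta\in\ms E^y_N}\bb P_\eta[\sup_{0\le u\le\delta}|X^T_N(u)-y|\ge1]$, the maximum being attained as $E_N$ is finite, and I invoke (H1): choosing a maximizing sequence $\eta^N_\ast\in\ms E^y_N$, the processes $X^T_N$ converge under $\bb P_{\eta^N_\ast}$ to $\bs X$ started at $y$, whence
\[
\limsup_{N\to\infty}\max_{\eta\in\ms E^y_N}\bb P_\eta\Big[\sup_{0\le u\le\delta}|X^T_N(u)-y|\ge1\Big]\;\le\;\bs P_y\Big[\sup_{0\le u\le\delta}|\bs X(u)-y|\ge1\Big]\xrightarrow[\delta\to0]{}0\;,
\]
the last limit holding because $\bs X$ is a right-continuous jump chain with $\bs X(0)=y$. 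The main obstacle is precisely this passage to the limit, since $\omega\mapsto\sup_{0\le u\le\delta}|\omega(u)-y|$ is not Skorohod-continuous. I would circumvent it by rewriting the event as $\{\tau^T_N\le\delta\}$, where $\tau^T_N$ is the first exit of $X^T_N$ from $\{y\}$; because $\bs X$ leaves $y$ through a genuine jump, the first-exit time is a.s.\ continuous at $\bs X$, so $\tau^T_N$ converges in law to $\tau^{\bs X}$, and, the law of $\tau^{\bs X}$ having no atoms, $\bb P_{\eta^N_\ast}[\tau^T_N\le\delta]\to\bs P_y[\tau^{\bs X}\le\delta]$. Combining the three estimates proves the lemma; note that only the invariance of $\mu_N$, (H1) and \eqref{b09a} enter in this argument, the remaining hypotheses of Proposition \ref{p02} being inherited from its statement.
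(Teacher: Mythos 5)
Your proof is correct, and it takes a genuinely different---and leaner---route than the paper's. Both arguments start from the same consequence of the stationarity of $\mu_N$, namely that the summand has a definite sign, $\bb P_{\mu_N^y}[\xi^N(s)=\xi]\le \mu_N^y(\xi)$ for $\xi\in\ms E_N^y$. The paper, however, keeps the resulting quantity in its ``incoming'' form, $\sum_{\zeta\notin\ms E_N^y}\frac{\mu_N(\zeta)}{\mu_N(\ms E_N^y)}\,\bb P_\zeta[\xi^N(s)\in\ms E_N^y]$ (display \eqref{b101}), and must then convert the contribution of the other wells into exit probabilities that (H1) can control; this is precisely where the trichotomy enters: reversibility (a) performs the conversion term by term via detailed balance, condition (b) pays for it with the bounded mass ratios $\mu_N(\ms E_N^z)/\mu_N(\ms E_N^y)$, and condition (c) replaces it by a separate mixing argument through the stopping time $\sigma_N$ and conditions \eqref{b04}, \eqref{b07}. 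You instead sum the signed differences directly and collapse the whole expression to the single escape probability $\bb P_{\mu_N^y}[\xi^N(s)\notin\ms E_N^y]$; equivalently, you use the aggregate mass-balance identity, valid for any stationary (not necessarily reversible) chain, that the incoming mass $\sum_{\zeta\notin\ms E_N^y}\mu_N(\zeta)\bb P_\zeta[\xi^N(s)\in\ms E_N^y]$ equals the outgoing mass $\sum_{\zeta\in\ms E_N^y}\mu_N(\zeta)\bb P_\zeta[\xi^N(s)\notin\ms E_N^y]$, as both follow from $\sum_{\zeta\in E_N}\mu_N(\zeta)\bb P_\zeta[\xi^N(s)\in\ms E_N^y]=\mu_N(\ms E_N^y)$. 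After that, the split into $\Delta_N$ (killed by \eqref{b09a} together with stationarity) and $\breve{\ms E}_N^y$ (killed by (H1), via the inclusion into $\{\sup_{0\le u\le\delta}|X_N^T(u)-y|\ge 1\}$) needs nothing more. The net effect is that your argument proves the lemma under (H1), \eqref{b09a} and stationarity alone: conditions (a), (b), (c), (M1) and (M2) are never used, so your proof is both more elementary and strictly more general than the paper's, which in particular renders its case analysis redundant for this lemma. Two further points in your favor: your reduction of the supremum event to the exit time $\tau_N^T$, with the a.s.\ continuity of the exit functional at step paths and the atomless exponential exit law, is the careful justification of a step the paper applies only implicitly in its cases (a) and (b), where it bounds the same quantity by $\bb P_{\mu_N^y}[\sup_{0\le s\le\delta}|X_N^T(s)-y|\ge 1]$ and simply invokes (H1) (note (H1) is stated for deterministic sequences $\eta^N\in\ms E_N^y$, so your explicit passage through a maximizing sequence is exactly what is needed); and your estimates are uniform over $s\le\delta$, which is what the application inside the proof of Proposition \ref{p02}, where $s$ is the random time $\delta-H_{\ms B_N^y}-\varepsilon_N$, actually requires.
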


\begin{proof}
For all $\xi\in\ms E_N^y$,
\begin{align*}
\bb P_{\mu_N^y}^N\big[ \xi^N(s)=\xi\big] 
\;&=\; \frac{1}{\mu_N(\ms E_N^y)} \sum_{\zeta\in\ms E_N^y} \mu_N(\zeta)\,
\bb P_\zeta^N\big[ \xi^N(s)=\xi\big] \\
&=\; \frac{1}{\mu_N(\ms E_N^y)} \, \bb P_{\mu_N}^N\big[
\xi^N(s)=\xi\big] \; -\;
\sum_{\zeta\notin \ms E_N^y }\frac{\mu_N(\zeta)}{\mu_N(\ms E_N^y)} 
\, \bb P_\zeta^N\big[ \xi^N(s)=\xi\big]\\
&=\; \mu_N^y(\xi) \;-\; \sum_{\zeta\notin \ms E_N^y}
\frac{\mu_N(\zeta)}{\mu_N(\ms E_N^y)} \,\bb P_\zeta^N\big[ \xi^N(s)=\xi\big]\, .
\end{align*}
Hence,
\begin{equation}
\label{b101}
\begin{aligned}
& \sum_{\xi\in\ms E_N^y} \Big|\,\bb P_{\mu_N^y}^N\big[
\xi^N(s)=\xi\big] \, -\, \mu_N^y(\xi) \Big| \;=\;
\sum_{\zeta\notin \ms E_N^y}\frac{\mu_N(\zeta)}{\mu_N(\ms E_N^y)} 
\, \bb P_\zeta^N\big[ \xi^N(s)\in\ms E_N^y\big] \\
& \qquad \le \; \frac{\mu_N(\Delta_N)}{\mu_N(\ms E_N^y)}
\;+\; \frac{1}{\mu_N(\ms E_N^y)} \sum_{\zeta\in \breve{\ms E}_N^y}
\mu_N(\zeta)\, \bb P_\zeta^N\big[ \xi^N(s)\in \ms E_N^y\big]\; .
\end{aligned}
\end{equation}
By \eqref{b09a}, the first term of this sum vanishes, as
$N\to\infty$. It remains to show that the second term also vanishes
under assumption (a), (b) or (c).

Assume first that (a) holds. Then, by reversibility, the last term in
\eqref{b101} is equal to
\begin{equation*}
\sum_{\xi\in\ms E_N^y} \mu_N^y(\xi)\, \bb P_{\xi}^N
\big[\xi^N(s)\in\breve{\ms E}_N^y\big] \;\le\; 
\bb P_{\mu_N^y} \big[ \sup_{0\le s\le\delta} |X^T_N(s) - y| \ge 1 \big]\;.
\end{equation*}
This expression vanishes, as $N\to\infty$, by assumption (H1). This
completes the proof of the lemma under the hypothesis (a).

Assume now that condition (b) is in force. In this case, the last term in
\eqref{b101} is bounded by
\begin{equation*}
\sum_{z\neq y}\frac{\mu_N(\ms E_N^z)}{\mu_N(\ms E_N^y)}\,
\bb P_{\mu_N^z} \Big[ \sup_{0\le s\le\delta} |X^T_N(s) - z| \ge 1 \Big] 
\;\le\; C_0 \sum_{z\neq y}\bb P_{\mu_N^z} \Big[ \sup_{0\le s\le\delta}
|X^T_N(s) - z| \ge 1 \Big]\;. 
\end{equation*}
Here again, by assumption (H1), this expression vanishes, as
$N\to\infty$. This completes the proof of the lemma under the
hypothesis (b).

Assume, finally, that condition (c) is fulfilled. Note that
\begin{align*}
\bb P_{\mu_N^y}^N\big[ T_N({s-\varepsilon_N})-T_N({s-2\varepsilon_N})&\le \frac{1}{2}\varepsilon_N\big] \le \bb P_{\mu_N^y}^N\big[\int_{s-2\varepsilon_N}^{s-\varepsilon_N}\mathbf{1}\{\xi^N(t)\in\Delta_N\}\, dt \ge \frac{1}{2}\varepsilon_N\big]\\
&\le 2\,\frac{\mu_N(\Delta_N)}{\mu_N(\ms E_N^y)},
\end{align*}
by Markov's inequality. The last expression vanishes as $N\to\infty$ by \eqref{b09a}.
Define the stopping
time $\sigma_N$ as
\begin{equation*}
\sigma_N \;=\; \inf \Big\{t\ge s-2\varepsilon_N: \xi^N(t)\in \ms
B_N^y\Big\}\;. 
\end{equation*}
By repeating the arguments that led to \eqref{tracewilldo} and
\eqref{iterated} we obtain that
\begin{equation}
\label{b16}
\lim_{\delta\to 0}\lim_{N\to\infty}\bb
P_{\mu_N^y}^N\big[\sigma_N>s-\varepsilon_N\big] \;=\;0\;.
\end{equation}
Let
\begin{equation*}
R_N^{(10)}(s,\delta,\xi) \;=\;
\bb P_{\mu_N^y}^N\big[ \xi^N(s)=\xi\big] \;-\; \mu_N^y(\xi)\;.
\end{equation*}
Conditioning first on $\sigma_N$, and using \eqref{b04},
\eqref{b07} and \eqref{b16} yields that
\begin{equation*}
\lim_{\delta\to 0}\limsup_{N\to\infty}\sum_{\xi\in\ms
  E_N^y}|R_N^{(10)}(s,\delta,\xi)| \;=\;0\;.
\end{equation*}
This concludes the argument.
\end{proof}

\section {Examples}
\label{sec02}

We present in this section four examples to evaluate the conditions
introduced in the previous sections. The first example belongs to the
class of models in which the metastable sets are singletons. In the
second and third examples the metastable sets are not singletons, but
the process visits all configurations of a metastable set before
hitting a new metastable set. These processes are said to visit
points. In the second example the assumptions of Lemma \ref{l08} are
in force, but not in the third. For this latter class, we show that
the conditions of Corollary \ref{l07} are fulfilled for an appropriate
singleton set $\ms B^x_N$. In the last example, the process does not
visit all configurations of a metastable set before reaching a new
metastable set. In these models the entropy plays an important role in
the metastable behavior of the system. For this last model, we prove
that the hypotheses of Lemma \ref{l04} hold.

The purpose of this section is not to show that the conditions of
Lemmata \ref{l08}, \ref{l04} or Corollary \ref{l07} are in force in
great generality. Actually, in some cases, this requires lengthy
arguments and a detailed analysis of the dynamics.  We just want to
convince the reader that this is possible. In other words, that one
can deduce the convergence of the finite-dimensional distributions and
the convergence of the state of the process from conditions (H1), (H2) and some reasonable additional
conditions.

In the arguments below we use the Dirichlet and the Thomson principles
for the capacities between two disjoint sets of $E_N$. We do not
recall these results here and we refer to \cite[Section 7.3]{BovHol} 

\begin{example}
[Inclusion process \cite{grv, bdg}]
\label{ex1b}
The inclusion process describes the evolution of particles on a
countable set.  Recall from \eqref{f02} that we denote by $\bb T_L$,
$L\ge 1$, the discrete, one-dimensional torus with $L$ points, by
$E_N$ the set of configurations on $\bb T_L$ with $N$ particles, and
by $\eta_x$, $x\in \bb T_L$, the total number of particles at $x$ for
the configuration $\eta$.

Fix a sequence $(d_N : N\ge 1)$ of strictly positive numbers. Recall
from \eqref{f01} the definition of the configuration
$\sigma^{x,y}\eta$. The reversible, nearest-neighbor, inclusion
process associated to the sequence $d_N$ is the continuous-time,
$E_N$-valued Markov process $\{\eta^N(t) : t\ge 0\}$ whose generator
$L_N$ acts on functions $f: E_N\to\bb R$ as
\begin{equation*}
(L_N f) (\eta) \;=\; \sum_{\stackrel{x,y\in \bb T_L}{x\not = y}}
\eta_x\, ( d_N + \eta_y) \, r(y-x) \, 
\big\{ f(\sigma^{x,y}\eta) - f(\eta) \big\} \;,
\end{equation*}
where $r(-1) = r(1) = 1$, $r(x)=0$, otherwise.

The inclusion process is clearly irreducible and it is reversible with
respect to the probability measure $\mu_N$ given by
\begin{equation*}
\mu_N(\eta) \;=\; \frac {1} {Z_{N}} \, 
\prod_{x\in \bb T_L} w_N(\eta_x) \;,
\end{equation*}
where $Z_{N}$ is the normalizing constant, $w_N(k) = \Gamma
(k+d_N)/k!\, \Gamma(d_N)$, and $\Gamma$ is the gamma function.

Assume that $d_N \log N \to 0$, as $N\uparrow\infty$. Denote by
$\xi^{x,N}$ the configurations in which all particles are placed at
site $x$, $\xi^{x,N}_x = N$, $\xi^{x,N}_y = 0$ for $y\not = x$, and
let $\ms E^x_N = \{\xi^{x,N}\}$. By \cite[Proposition 2.1]{bdg},
$\mu_N(\ms E^x_N) \to 1/L$ as $N\uparrow\infty$.

The metastable behavior of the inclusion process in the sense of
conditions (H1), (H2) has been proved in \cite[Theorem 2.3]{bdg}. The
time-scale at which a metastable behavior is observed is given by
$\theta_N = 1/d_N$.
\end{example}

In this model the metastable sets $\ms E^x_N$ are singletons.  This
phenomenon occurs in many other models. For instance, in spin systems
evolving in large, but fixed, volumes as the temperature vanishes
(cf. the Ising model with an external field under the Glauber dynamics
\cite{nevsch, sch1, bl2011} and the Blume-Capel model with zero
chemical potential and a small magnetic field \cite{CNS2015, ll,
  CNS2016}).  It also occurs for random walks evolving among random
traps \cite{jlt1, jlt2}.

We claim that all hypotheses of Propositions \ref{l01}, \ref{p02} are
in force. Actually, with the exception of (H1) and (H2), all
assumptions trivially hold because the metastable sets are singletons.
Set $\ms B^x_N = \ms E^x_N = \{\xi^{x,N}\}$.

\smallskip\noindent {\sl A. Conditions (H1) and (H2)}. We already
mentioned that assumptions (H1) and (H2) have been proved in
\cite{bdg} with the time-scale $\theta_N = 1/d_N$.

\smallskip\noindent {\sl B. Condition \eqref{03}}. By
\cite[Proposition 2.1]{bdg}, $\mu_N(\xi^{x,N})\to 1/L$. In particular,
the inclusion process satisfies the assumption of Lemma \ref{l08}.

\smallskip\noindent {\sl C. Condition (M1)}.  Condition (M1) is empty
because the sets $\ms E^x_N$ and $\ms B^x_N$ coincide.

\smallskip\noindent {\sl D. Condition \eqref{b04} of (M2)}. Since $\ms
E^x_N = \{\xi^{x,N}\}$, starting from $\xi^{x,N}$, $H_{\Delta_N}$
corresponds to the first jump of the Markov chain $\xi^N(t)$, denoted
hereafter by $\tau_1$: $\bb P_{\xi^x_N} [H_{\Delta_N} =
\tau_1]=1$. Since the process has been speeded-up by $\theta_N =
1/d_N$, $\tau_1$ is an exponential random variable of rate $2N$. It is
thus enough to choose a sequence $\varepsilon_N$ such that
$\varepsilon_N \ll 1/N$.

\smallskip\noindent {\sl E. Condition \eqref{b07} of (M2)}. This
condition is empty because $\ms E^x_N = \{\xi^{x,N}\}$. It holds for
any sequence $\varepsilon_N>0$.

\smallskip\noindent {\sl F. Condition \eqref{b09a} of Proposition
  \ref{p02}}. This is a consequence of \cite[Proposition 2.1]{bdg}
which asserts that $\mu_N(\xi^{x,N})\to 1/L$.

\smallskip\noindent {\sl G. Conditions (a), (b) or (c)}.  Assumption
(a) of Proposition \ref{p02} is in force as the process is reversible.

\begin{example}[Condensing zero-range processes \cite{bl2012a, l2014, s2018}]
\label{ex3}
This model has been introduced at the beginning of Section
\ref{sec-2}.  Set $\theta_N = N^{1+\alpha}$.
\end{example}

The condensing zero-range process is an example of a process which
visits points in the sense that, starting from a well $\ms E^x_N$, the
dynamics visits all configurations of $\ms E^x_N$ before reaching
another well.  This property reads as follows.  For all $x\in S = \bb
T_L$,
\begin{equation*}
\lim_{N\to\infty} \max_{\eta, \xi\in \ms E^x_N}
\bb P^N_\eta [ H_{\breve{\ms E}^x_N} < H_\xi ]  \;=\; 0\;,
\end{equation*}
where $\breve{\ms E}^x_N$ has been introduced in \eqref{17}.
Other examples of metastable dynamics which visit points are random
walks in a potential field \cite{cgov, begk01, lmt2015, lseo2016a}.

We show below that all hypotheses of Propositions \ref{l01}, \ref{p02}
are in force. In certain cases we impose further assumptions on the
dynamics, e.g., that it is reversible or that $|S|=2$, to avoid
lengthy arguments. The main tool to prove this assertion is the fact
that the process visit points. Recall from \eqref{l3-19} that we
denote by $\Cap_{N} (\ms A, \ms B)$ the capacity between two disjoint
subsets $\ms A$ and $\ms B$ of $E_N$. Since $\xi^N(t)$ is the process
$\eta^N(t)$ speeded-up by $\theta_N$, by \cite[Theorem 2.2]{bl2012a}
and \cite[Theorem 6.3]{s2018}, for any disjoint subsets $A$, $B$ of
$S$,
\begin{equation}
\label{10}
\lim_{N\to\infty} \Cap_{N} \Big( \bigsqcup_{x\in A} \ms E^x_N,
\bigsqcup_{y\in B} \ms E^y_N \Big) \;=\; C(A,B) \;\in\; (0,\infty)\;,
\end{equation}
where $C(A,B)$ is the capacity between $A$ and $B$ for the random walk
on $S$ with transition probabilities $p(y\!-\!x)$, for $x,y\in S$.\\[2mm]
\smallskip\noindent{\sl A. Conditions (H1) and (H2)}. Assumptions (H1)
and (H2) have been proved in \cite{bl2012a} in the reversible case, in
\cite{l2014} in the totally asymmetric case, $p=1$, and in
\cite{s2018} in the asymmetric case $1/2<p<1$. 

\smallskip\noindent{\sl B. Condition \eqref{03}}. We prove that the
assumptions of Lemma \ref{l04} are in force in the reversible case for
$\ms B^x_N = \{\xi^{x,N}\}$, where $\xi^{x,N}$ represents the
configurations in which all particles are placed at site $x$.

Fix $x\in S$ and $\eta\in\ms E^x_N$. By the Markov inequality and
\cite[Proposition 6.10]{bl2},
\begin{equation*}
\bb P_\eta \big[H_{\ms B_N^x}>\delta\big] \;\le\;
\frac 1 \delta\, \bb E_\eta \big[H_{\ms B_N^x}\big]
\;\le\;
\frac 1 \delta\, \frac 1{\Cap_N(\eta , \xi^{x,N})} 
\end{equation*}
By (H1), page 806 in \cite{bl2012a},
\begin{equation*}
\lim_{N\to\infty} \sup_{\eta\in\ms E_N^x} 
\frac {\Cap_N(\ms E^x_N, \breve{\ms E}^x_N )}{\Cap_N(\eta ,
  \xi^{x,N})} \;=\; 0\;. 
\end{equation*}
Therefore, by \eqref{10}, for every $\delta>0$, 
\begin{equation}
\label{l3-01}
\lim_{N\to\infty} \max_{\eta\in\ms E^x_N} \bb P_\eta
\big[H_{\ms B_N^x}>\delta\big] \;=\; 0\;.
\end{equation}

On the other hand, for every $s>0$,
\begin{equation*}
\bb P_{\xi^{x,N}} \big[ \xi^N(s) \in \Delta_N \big] \;\le\;
\frac 1{\mu_N(\xi^{x,N})} \bb P_{\mu_N} 
\big[ \xi^N(s) \in \Delta_N \big] \;=\;
\frac {\mu_N(\Delta_N)}{\mu_N(\xi^{x,N})} \;\cdot
\end{equation*}
By equation (3.2) in \cite{bl2012a}, $\mu_N(\Delta_N) \to 0$, and by
\cite[Proposition 2.1]{bl2012a}, $\mu_N(\xi^{x,N}) \to 1/Z_S>0$. This
shows that the second assumption of Lemma \ref{l04} is in force.

I. Seo extended the previous result to the asymmetric case $1/2<p<1$
in \cite[Proposition 6.3]{s2018}.

\smallskip\noindent{\sl C. Condition (M1)}. Since $H_{\ms B_N^x}^{\ms E_N^x} \le
  H_{\ms B_N^x}$, condition (M1) follows from \eqref{l3-01}.

\smallskip\noindent{\sl D. Condition \eqref{b04} of (M2)}. Since the
exterior boundary of $\ms E^x_N$ is contained in $\Delta_N$, under
$\bb P_{\xi^{x,N}}$,  $H_{(\ms E^x_N)^c}=H_{\Delta_N}$. We claim that
\begin{equation}
\label{l3-06}
\bb P_{\xi^{x,N}} [H_{\Delta_N} \le 2\, \varepsilon_N] \;\le\;
\frac{C_0\, \varepsilon_N\, \theta_N}{\ell^\alpha_N}    
\end{equation}
for some finite constant $C_0$. In particular, condition \eqref{b04}
of (M2) is fulfilled provided we choose $\varepsilon_N\, \theta_N \ll
\ell^\alpha_N$. 

We turn to the proof of \eqref{l3-06}. By Corollary \ref{lv3-l3} we have 
\begin{equation}
\bb P_{\xi^{x,N}} [H_{\Delta_N} \le 2\, \varepsilon_N] 
\;\le\; \frac{2e\, \varepsilon_N}{\mu_N(\xi^{x,N})}
\Cap_N(\xi^{x,N} , \Delta_N).
\label{v4-bc}
\end{equation}
On the other hand, by monotonicity of capacities
\begin{equation*}
\Cap_N (\xi^{x,N} , \Delta_N) \;\le\; \Cap_N(\ms E^x_N, \Delta_N) \;=\; \frac {1}2
\sum_{\eta\in \ms E^x_N} \sum_{\xi \in \Delta_N} \mu_N(\eta)\, R_N(\eta,\xi) \;.
\end{equation*}

Since the holding rates $\lambda_N(\eta)$ are uniformly bounded by $C_0 \theta_N$, 
if we denote by $\partial \ms E^x_N$ the interior boundary of the set $\ms
E^x_N$, the previous sum is bounded by $C_0\, \theta_N\,
\mu_N(\partial \ms E^x_N)$. An explicit computation shows that the
measure of $\partial \ms E^x_N$ is bounded by
$\ell_N^{-\alpha}$. The proof of this assertion is similar to the one
of \cite[Lemma 3.1]{bl2012a} and is omitted. Hence, $\Cap_N (\xi^{x,N}
, \Delta_N) \;\le\; C_0\, \theta_N\, \ell_N^{-\alpha}$. Together with \eqref{v4-bc} and 
\cite[Proposition 2.1]{bl2012a}, this gives \eqref{l3-06}. (Remark: In
the case $|S|=2$, it is possible to compute exactly $\Cap_N (\xi^{x,N}
, \Delta_N)$ and one gets that it is of order $\theta_N\,
\ell_N^{-(1+\alpha)}$. We lost a factor $1/\ell_N$ at
the first estimate in the preceding display.)

\smallskip\noindent{\sl E. Condition \eqref{b07} of (M2)}. The proof
relies on an estimate of the spectral gap. We prove this condition in
the case of two sites, the general case can be handled using the
martingale approach developed by Lu and Yau \cite[Appendix 2]{kl}.

Assume that $|S|=2$, and denote by $\lambda_{R,1}$ the spectral gap of
the process $\xi^N(t)$ reflected at $\ms E^1_N = \{0, \dots,
\ell_N\}$. We claim that
\begin{equation}
\label{l3-10}
\lambda_{R,1} \;\ge\; \frac{c_0\, \theta_N}{\ell^2_N}\;\cdot 
\end{equation}

On two sites, the zero-range process is a birth and death process, and
the reflected process on $\ms E^1_N$ is the continuous-time Markov
chain whose generator is given by
\begin{equation*}
(\mc L^{R,1}_N f)(\eta) \;=\; g_{R,N} (\eta) \{f(\eta - 1) - f(\eta)\} \;+\;
g_{R,N}(N-\eta) \{f(\eta + 1) - f(\eta)\} \;, \quad \eta\;\in\;
\ms E^1_N\;,
\end{equation*}
where $g_{R,N}(\zeta) = \theta_N\, g(\zeta)$ for all $\zeta\not =
N-\ell_N$, and $g_{R,N}(N - \ell_N)=0$, due to the reflection at
$\ms E^1_N$.  Denote by $\mu^1_N$ the stationary measure $\mu_N$
conditioned to $\ms E^1_N$.

In order to prove \eqref{l3-10}, we have to show that there exists a
finite constant $C_0$ such that
\begin{equation}
\label{l3-08}
E_{\mu^1_N}\Big[ \big(f - E_{\mu^1_N} [f] \big)^2\Big] 
\;\le\; C_0 \, \frac{\ell^2_N}{\theta_N}\, \< f, (-\mc L^{R,1}_N) f\>_{\mu^1_N}
\end{equation}
for all $N\ge 1$ and all functions $f: \{0, \dots, \ell_N\} \to \bb
R$, where $\< f, g\>_{\mu^1_N}$ represents the scalar product in
$L^2(\mu^1_N)$.

Fix a function $f: \{0, \dots, \ell_N\} \to \bb R$. By Schwarz
inequality, 
\begin{align*}
& E_{\mu^1_N}\Big[ \big(f - E_{\mu^1_N} [f] \big)^2\Big] 
\;\le\; E_{\mu^1_N}\Big[ \big(f - f(0) \big)^2\Big] \\
&\quad \;\le\; \sum_{\eta\in \ms E^1_N} \mu^1_N(\eta)
\sum_{\xi =0}^{\eta-1} [f(\xi+1) - f(\xi)]^2 \, \mu^1_N(\xi)\, 
\sum_{\xi' =0}^{\eta-1} \frac 1{\mu^1_N(\xi')}\;\cdot
\end{align*}
The sum over $\xi'$ is bounded by $C_0 \eta^{1+\alpha}$. Hence, since
$\mu^1_N(\eta) \le C_0 \eta^{-\alpha}$, changing the order of
summations the previous expression is seen to be less than or equal to
\begin{equation*}
C_0 \sum_{\xi =0}^{\ell_N-1}
[f(\xi+1) - f(\xi)]^2 \, \mu^1_N(\xi)\, 
\sum_{\eta=\xi+1}^{\ell_N} \eta \;\le\;
C_0 \, \ell^2_N \sum_{\xi =0}^{\ell_N-1}
[f(\xi+1) - f(\xi)]^2 \, \mu^1_N(\xi) \;.
\end{equation*}
This expression is bounded by $C_0 \, (\ell^2_N/\theta_N)\, \< f,
(-\mc L^{R,1}_N) f\>_{\mu^1_N}$ because $g$ is bounded below by a
positive constant and the process is speeded-up by $\theta_N$. This
proves claim \eqref{l3-08}, and therefore \eqref{l3-10}.

We turn to condition \eqref{b07} of (M2). We claim that this condition
is fulfilled provided $\varepsilon_N \theta_N \gg \ell^{2}_N$. Indeed,
since $\mu^y_N({\xi^{x,N}}) \ge c_0$, in view of \eqref{l3-07}, we have to
show that
\begin{equation}
\label{l3-09}
\lim_{N\to \infty} \lambda_{R,1} \, \varepsilon_N\;=\; \infty \;,
\end{equation}
which follows from \eqref{l3-10} if $\varepsilon_N \theta_N \gg
\ell^{2}_N$.

For $|S|=2$, in view of (D) and (E) above, conditions \eqref{b04} and
\eqref{b07} of (M2) are fulfilled for any sequence $\varepsilon_N$
such that $\ell^{2}_N \ll \varepsilon_N \theta_N \ll \ell^{1+\alpha}_N$.

\smallskip\noindent{\sl F. Condition \eqref{b09a} of Proposition
  \ref{p02}}. By \cite[Remark 2.5]{bl2012a},
\begin{equation*}
\lim_{N\to\infty} \frac {\mu_N(\Delta_N)}{\mu_N(\ms E^x_N)} \;=\; 0\;.
\end{equation*}

\smallskip\noindent{\sl G. Conditions (a), (b) or (c)}.  Assumption (b) of
Proposition \ref{p02} is in force since $\mu_N(\ms E^x_N)= \mu_N(\ms
E^y_N)$ for all $x$, $y\in S$.

\begin{example}[Random walk in a potential field] 
\label{ex2}
In this example, the sets $\ms B^x_N$ are still reduced to singletons,
$\ms B^x_N=\{\xi^{x,N}\}$, but $\mu_N(\xi^{x,N})\to 0$. To simplify
the discussion as much as possible, we assume that the process is
reversible and that the potential has two wells of the same height,
but the arguments apply to the more general situations considered in
\cite{begk01, lmt2015, lseo2016a}.

Let $\Xi$ be an open, bounded and connected subset of $\bb R^d$ with a
smooth boundary $\partial \,\Xi$. Fix a smooth function
$F:\Xi\cup \partial \,\Xi\to\bb R$, with three critical points,
satisfying the following assumptions:

\begin{enumerate}
\item[(RW1)] There are two local minima, denoted by $m_1$, $m_2$. All
  the eigenvalues of the Hessian of $F$ at these points are
  \emph{strictly} positive. Moreover, $F( m_1)=F( m_2)=:h$.
\item[(RW2)] The other critical point of $F$ is denoted by $
  \sigma$. The Hessian of $F$ at $\sigma$ has one strictly
  negative eigenvalue, all the other ones being strictly positive.
\item[(RW3)] For every $x\in \partial\, \Xi$, $(\nabla F)(x)
  \cdot n (x) > 0$, where $n (x)$ represents the
  exterior normal to the boundary of $\Xi$, and $x\cdot y$ the
  scalar product of $x$, $y\in\bb R^d$. This hypothesis
  guarantees that $F$ has no local minima at the boundary of $\Xi$.
\end{enumerate}

Denote by $\Xi_N$ the discretization of $\Xi$: $\Xi_N = \Xi \cap
(N^{-1} \bb Z)^d$, $N\ge 1$.  Let $\mu_N$ be the probability measure
on $\Xi_N$ defined by
\begin{equation*}
\mu_N(\eta) \;=\; \frac 1{Z_N} e^{-N F(\eta)}\;,\quad \eta \in
\Xi_N\;, 
\end{equation*}
where $Z_N$ is the partition function $Z_N = \sum_{\eta\in \Xi_N}
\exp\{-N F(\eta)\}$. By equation (2.3) in \cite{lmt2015},
\begin{equation}
\label{ex4-1}
\lim_{N\to\infty} \frac{Z_N e^{N h}}{(2\pi N)^{d/2}  }
\;=\; \frac 1{\sqrt{\det {\rm Hess}\, F(m_1)}} \;+\;
\frac 1{\sqrt{\det {\rm Hess}\, F(m_2)}}\;, 
\end{equation}
where ${\rm Hess}\, F(x)$ represents the Hessian of $F$ calculated
at $x$ and $\det {\rm Hess}\, F(x)$ its determinant.

Let $\{\eta^N(t) : t\ge 0\}$ be the continuous-time
Markov chain on $\Xi_N$ whose generator $L_N$ is given by
\begin{equation}
\label{v54}
(L_N f)(\eta) \;=\; \sum_{\substack{\xi \in\Xi_N\\
\Vert\xi - \eta \Vert = 1/N}}  e^{-(1/2) N [F(\xi) - F(\eta)]} 
\, [f(\xi) - f(\eta)]\;,
\end{equation}
where $\Vert \,\cdot\,\Vert$ represents the Euclidean norm of $\bb
R^d$.  
\end{example}

Recall that $m_i$, $i=1$, $2$, represent the two local minima of $F$
in $\Xi$, and $\sigma$ the saddle point. Let
$H:=F(\sigma)>F(m_1)=F(m_2)=h$.  Denote by $V_i = B_\kappa(m_i)$,
$\kappa>0$, two balls of radius $\kappa$ centered at the local minima.
Assume that $\kappa$ is small enough for $\sup_{x\in V_i} F(x) < H$.
Denote by $\ms E^i_N$ the discretization of the sets $V_i$: $\ms E^i_N
= \Xi_N \cap V_i$.

Let $\theta_N = 2\pi N \exp\{[H-h]N\}$. It has been proved in
\cite{lmt2015, lseo2016a} that the process $X^{\rm T}_N(t)$ fulfills
conditions (H1) and (H2).  We claim that the assumptions of
Propositions \ref{l01} and \ref{p02} are in force.

We prove condition \eqref{03} through Corollary \ref{l07} with $\ms
B^i_N = \{\xi^{i,N}\}$, where $\xi^{i,N}$ is a point in $\Xi_N$ which
approximates the local minima $m_i$.

\smallskip\noindent{\sl A. Condition \eqref{b02}}.
Fix $\eta\in \ms E^i_N$. Since $H_{\ms B_N^i}^{\ms E_N^i}\le H_{\ms B_N^i}$, by the Markov inequality, it is enough to
prove that
\begin{equation}
\label{l3-13}
\lim_{N\to\infty} \bb E_\eta [H_{\ms B_N^i}] \;=\; 0\;.
\end{equation}
By \cite[Proposition 6.10]{bl2}, the expectation is bounded by
$1/\Cap_N (\eta , \ms B_N^i)$. Consider a path $(\eta_0 = \eta,
\eta_1, \dots, \eta_M = \xi^{i,N})$ such that $M\le C_0 N$,
$\eta_i\in\Xi_N$, $\Vert \eta_i - \eta_{i+1}\Vert = 1/N$, $F(\eta_i)
\le H - \epsilon$ for some $\epsilon>0$.  Let $\Phi$ be the unitary
flow from $\eta$ to $\xi^{i,N}$ such that $\Phi(\eta_i, \eta_{i+1})
=1$. By Thomson's principle,
\begin{equation*}
\frac 1{\Cap_N (\eta , \xi^{i,N})} \;\le\; \frac{Z_N}{\theta_N}
\sum_{j=0}^{M-1}  e^{(N/2) \, [F(\eta_i) + F(\eta_{i+1})]}\;.
\end{equation*}
The factor $\theta_N$ appeared as the process has been
speeded-up. This expression vanishes as $N\to\infty$ in view of
\eqref{ex4-1}, the definition of $\theta_N$, and because $F(\eta_i)\le
H-\epsilon$, $M\le C_0N$.

\smallskip\noindent{\sl B. Condition \eqref{b04}.} Let $h_i =
\inf_{x\in\partial V_i} F(x)$. We claim that this condition is in
force provided
\begin{equation*}
\varepsilon_N\, \theta_N
\;\ll\; N^{-d} \, e^{ N [h_i - h]} \;.
\end{equation*}

Since, under $\bb P_{\xi^{i,N}}$, $H_{(\ms E^i_N)^c} = H_{\Delta_N}$,
we need to estimate $\bb P_{\xi^{i,N}} [ H_{\Delta_N} \le
2\varepsilon_N]$.  By Corollary \ref{lv3-l3},
\begin{equation*}
\bb P_{\xi^{i,N}} \big[ H_{\Delta_N} \le 2\varepsilon_N \big] 
\;\le\; \frac{C_0 \, \varepsilon_N\, \theta_N}{\mu_N(\xi^{i,N})}
\frac 1{Z_N}\, \sum_{\substack{\eta \in \partial_- \ms E^i_N \,,\,
\zeta \in \Delta_N \\
\Vert \eta-\zeta\Vert = 1/N} }
 e^{- (N/2) \, [F(\eta) + F(\zeta)]}\;,
\end{equation*}
where $\partial_- \ms E^i_N$ stands for the inner boundary of $\ms
E^i_N$: 
\begin{equation*}
\partial_- \ms E^i_N \;=\; \big \{\eta\in \ms E^i_N : 
\mu(\eta) \, R(\eta, \xi) >0
\text{ for some $\xi\not\in\ms E^i_N$ } \big\}\;.
\end{equation*}
By definition of $\ms E^i_N$, the right-hand side of the penultimate
formula is bounded above by $C_0 \, \varepsilon_N\, \theta_N\, N^d\, 
\exp\{ - N [h_i - h]\}$, which proves the claim.

\smallskip\noindent{\sl C. Condition \eqref{b07}.} We claim that this
condition is fulfilled provided 
\begin{equation}
\label{l3-12}
\theta_N \, \varepsilon_N  \;\gg\;  N^{d+1+b}
\end{equation}
for some $b>0$. 

We first estimate the spectral gap of the reflected process
$\xi^N_{R,i}(t)$, denoted by $\lambda_{R,i}$. We claim that
$\lambda_{R,i} \ge c_0 \, \theta_N \, N^{-(d+1)}$. To prove this
assertion, we have to show that
\begin{equation}
\label{l3-11}
E_{\mu^i_N}\Big[ \big(f - E_{\mu^i_N} [f] \big)^2\Big] 
\;\le\; C_0 \, \frac{N^{d+1}}{\theta_N}\, 
\< f, (-\mc L^{R,i}_N) f\>_{\mu^i_N}
\end{equation}
for all $N\ge 1$ and all functions $f: \ms E^i_N \to \bb R$, where $\<
f, g\>_{\mu^i_N}$ represents the scalar product in $L^2(\mu^i_N)$. For
each $\eta \in \ms E^i_N$, denote by $\gamma(\eta) = (\eta_0=\eta,
\dots, \eta_M =\xi^{i,N})$ a discrete version of the path from $\eta$
to $\xi^{i,N}$ given by $\dot x(t) = - (\nabla F)(x (t))$. This means
that $\Vert \eta_{j+1} - \eta_j\Vert =\frac{1}{N}$, $M\le C_0 N$, and
$\eta_j$ is the closest point of the lattice $\Xi_N$ to $x(t_j)$ for
some increasing sequence of times $\{t_j\}_{0\le j\le M}$.  Clearly,
$|F(\eta_j)-F\big(x({t_j})\big)|\le \frac{c_0}{N}$ and since
$\frac{d}{dt}F\big(x(t)\big)=-\|(\nabla F)\big(x(t))\|^2\le 0$, for
all $0\le k \le j\le M$ we have
\[
F(\eta_k)-F(\eta_j)\ge F(\eta_k)-F\big(x(t_k)\big)+F(\big(x(t_j)\big)-F(\eta_j)\ge -\frac{2c_0}{N}.
\]
In particular, 
\begin{equation}
e^{-NF(\eta)}\le e^{2c_0} e^{-\frac{N}{2}\big(F(\eta_j)+F(\eta_{j+1})\big)},\, j=0,1,\ldots,M-1.
\label{Fdec}
\end{equation}
Since $M\le C_0N$, by Schwarz inequality,
\begin{align*}
& E_{\mu^i_N}\Big[ \big(f - E_{\mu^i_N} [f] \big)^2\Big]
\;\le\; E_{\mu^i_N}\Big[ \big(f - f(\xi^{i,N}) \big)^2\Big] \\
&\quad \;\le\; C_0\, N \sum_{\eta\in \ms E^i_N} \mu_N^i(\eta)\,
\sum_{j=0}^{M(\eta)-1} [f(\eta_{j+1}) - f(\eta_j)]^2\\
&\quad \;\le\; C_0\, N\sum_{\eta\in \ms E^i_N}
\sum_{j=0}^{M(\eta)-1} \mu_{N}^i(\eta_j)
R_N(\eta_j,\eta_{j+1})\, [f(\eta_{j+1}) - f(\eta_j)]^2,
\end{align*}
where the last inequality follows from \eqref{Fdec}. Fix an edge
$(\zeta,\zeta')$ and consider all configurations $\eta\in \ms E^i_N$
whose path $\gamma(\eta)$ contains this pair (that is $(\zeta,\zeta')
= (\eta_j, \eta_{j+1})$ for some $0\le j <M$). Of course, there are at
most $|\ms E^i_N| \le C_0 N^d$ such configurations. Hence, changing
the order of summation, the previous sum is seen to be bounded above
by
\begin{equation*}
C_0\, N^{d+1} \sum_{\zeta\in \ms E^i_N} 
\sum_{\substack{\zeta' \in \ms E^i_N \\ 
\Vert \zeta' - \zeta\Vert=1/N}}
\mu_N^i(\zeta)\, R_N(\zeta,\zeta')\, [f(\zeta') - f(\zeta)]^2\;.  
\end{equation*}
This proves claim \eqref{l3-11} since the double sum is equal to
$(2/\theta_N) \< f, (-\mc L^{R,i}_N) f\>_{\mu^i_N}$.

We turn to the proof of condition \eqref{b07}. Fix a sequence
$\varepsilon_N$ satisfying \eqref{l3-12} for some $b>0$. By
\eqref{ex4-1}, $\mu_N(\xi^{i,N}) \ge c_0 N^{-d/2}$. Hence, by
\eqref{l3-11},
\begin{equation*}
\frac 1{\mu^i_N(\xi^{i,N})^{1/2}} \, e^{ - \lambda_{R,i} \varepsilon_N}
\;\le\; C_0\, N^{d/4}\, \exp\big\{ - c_0 \, \theta_N \,
\varepsilon_N  \, N^{-(d+1)}\big\}\;.
\end{equation*}
By \eqref{l3-12} this expression vanishes as $N\to\infty$. This proves
condition \eqref{b07} in view of \eqref{l3-07}.

Conditions \eqref{b09a} and \eqref{b09}
are elementary. Hence, as claimed, all conditions of Propositions
\ref{l01} and \ref{p02} are in force. Similar arguments apply in the
case of several wells and critical points, as well as in the
non-reversible setting.

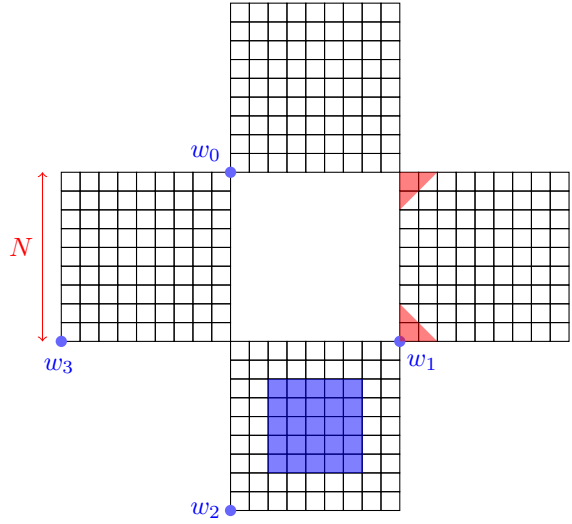
\begin{figure}
\centering
\begin{tikzpicture}[scale = .25]
\foreach \x in {0, ..., 8}
\foreach \y in {0, ..., 8}
\draw (\x,\y) -- (\x,\y+1) -- (\x+1,\y+1) -- (\x+1,\y) -- (\x,\y);
\foreach \x in {9, ..., 17}
\foreach \y in {9, ..., 17}
\draw (\x,\y) -- (\x,\y+1) -- (\x+1,\y+1) -- (\x+1,\y) -- (\x,\y);
\foreach \x in {-9, ..., -1}
\foreach \y in {9, ..., 17}
\draw (\x,\y) -- (\x,\y+1) -- (\x+1,\y+1) -- (\x+1,\y) -- (\x,\y);
\foreach \x in {0, ..., 8}
\foreach \y in {18, ..., 26}
\draw (\x,\y) -- (\x,\y+1) -- (\x+1,\y+1) -- (\x+1,\y) -- (\x,\y);
\draw[<->, red] (-10,9) -- (-10,18);
\draw[red]  (-10, 14) node[anchor=east] {$N$};
\draw[blue] (0,0) node[anchor=east] {$w_2$};
\fill[blue!60!white] (0,0) circle [radius = .3cm];
\draw[blue] (10.2,8.8) node[anchor=north] {$w_1$};
\fill[blue!60!white] (9,9) circle [radius = .3cm];
\draw[blue] (-7.8,7.7) node[anchor=east] {$w_3$};
\fill[blue!60!white] (-9,9) circle [radius = .3cm];
\draw[blue] (0,19) node[anchor=east] {$w_0$};
\fill[blue!60!white] (0,18) circle [radius = .3cm];
\fill [rectangle,blue,opacity=.5] (2,2)--(2,7)--(7,7)--(7,2);
\fill [red,opacity=.5] (9,9)--(11,9)--(9,11)--(9,9);
\fill [red,opacity=.5] (9,18)--(11,18)--(9,16)--(9,18);
\end{tikzpicture}
\caption{The graph $E_N$ of Example \ref{ex4}. The square in blue
  represents the set $\ms B^2_N$ and the red triangles the set
  $\Delta_N \cap Q^1_N$. The figure is misleading because the set $\ms
  B^2_N$ is a square of length $N-2M_N$ and almost fills the set
  $Q^2_N$ for $N$ large.}
\label{fig1}
\end{figure}

\begin{example}[Random walk on a singular graph]\cite{sc1, bl9}
\label{ex4} 
In this example, the metastable behavior is not due to an energy
landscape but to the presence of bottlenecks. After attaining a well,
the system remains there a time long enough to relax inside the well
before it hits a point from which it can jump to another well.  In
this example, to fulfil condition (M1) the set $\ms B^x_N$ can not be
taken as a singleton.

In many other models the entropy plays an important role in the
metastable behavior. In the majority of them, the time-scale in which
the metastable behavior is observed can not be computed explicitly
and is given in terms of the spectral gap or the expectation of
hitting times. This is the case of polymers in the depinned
phase \cite{cmt, clmst, LacTei}, or the evolution of a droplet in the
Ising model with the Kawasaki dynamics \cite{bl2015a, gl2015}.

We consider below a random walk on a graph $E_N$ which is illustrated
in Figure \ref{fig1} in the two-dimensional case.  For $N\ge 1$, $d\ge
2$, let $I_N = \{0, \dots, N\}$, $Q^+_N = I^2_N \times I^{d-2}_N$,
$Q^-_N = I^2_N \times (-I_N)^{d-2}$ be $d$-dimensional cubes of length
$N$. Let $w_i = w^N_i$, $0\le i\le 3$, be the points in $\bb Z^d$
given by $w_0 = (0,N, \bs 0)$, $w_1 = (N,0,\bs 0)$, $w_2 = (0,-N, \bs
0)$, $w_3 = (-N,0, \bs 0)$, where $\bs 0$ is the $(d-2)$-dimensional
vector with all coordinates equal to $0$.  Set $Q^i_N = w_i + Q^+_N$,
$i=0$, $2$, $Q^j_N = w_j + Q^-_N$, $j=1$, $3$, $E_N = \sqcup_{0\le i\le
  3} Q^i_N$. Note that the sets $Q^i_N \cap Q^{i+1}_N$ are singletons
in all dimensions. This explains the rather intricate definition of
the sets $Q^i_N$.

Denote by $e_1, \dots, e_d$ the canonical basis of $\bb R^d$.  Let
$\eta^N(t)$ be the continuous-time Markov chain on $E_N$ which jumps
from a configuration $\eta \in E_N$ to $\eta \pm e_j \in E_N$ at rate
$1$ if $\eta \mp e_j\in E_N$ and at rate $2$ if $\eta \mp e_j\not\in
E_N$. With these jump rates the Markov chain on the cube $I^d_N$ can
be thought as the projection on $I^d_N$ of a simple random walk on
$\bb Z^d$.

Denote by $n(\eta)\in \{0, 1, \dots, d\}$, $\eta\in E_N$, the number
of neighbors of $\eta$ which do not belong to $E_N$, and by $\ms C$
the four corners of $E_N$: $\ms C = \{\eta\in E_N : \eta \in Q^x_N
\cap Q^y_N \text{ for some } x\not = y\}$. Let $\mu_N$ be the
probability measure on $E_N$ given by 
\begin{equation*}
\mu_N(\eta) \;=\; \frac 1{Z_N}\,  \frac 1{2^{n(\eta)}} \;, \quad \eta\not\in \ms
C\;, \quad
\mu_N(\xi) \;=\; \frac 1{Z_N}\,  \frac 1{2^{d-1}} \;, \quad \xi \in \ms
C\;,
\end{equation*}
where $Z_N$ is the normalizing factor. The measure $\mu_N$ is the
unique stationary (actually, reversible) state. Denote by $\theta_N$
the inverse of the spectral gap of this chain. By \cite[Example
3.2.5]{sc1}, there exist constants $0<c(d) < C(d)<\infty$ such that
for all $N\ge 1$,
\begin{equation*}
c(2) \, N^2 \,\log N \;\le\; \theta_N  \;\le\;C(2)\, N^2
  \,\log N \;, \quad d=2
\end{equation*}
and
\begin{equation*}
c(d)\, N^d \;\le\; \theta_N  \;\le\; C(d)\, N^d\; \quad d\ge 3.
\end{equation*}
\end{example}
Fix sequences $\{\ell_N : N\ge 1\}$, $\{M_N : N\ge 1\}$, $1\ll \ell_N\ll
M_N \ll N$, such that
\begin{equation}
\label{l3-17}
\log\ell_N/\log N \to 1, \quad d=2\qquad\text{and}\qquad
N^2 \, \ell^{d-2}_N \,\ll\, M^d_N\;,\quad d\ge 3.
\end{equation}
Recall that we denote by $\ms C$ the four corners of $E_N$. Let
$\Delta_N$ be the points at graph distance less than $\ell_N$ from one
of the corners:
\begin{equation*}
\Delta_N \;=\; \{\eta\in E_N : d(\eta, \ms C) \le \ell_N\}\;,
\end{equation*}
where $d(\eta,\xi)$ stands for the graph distance from $\eta$ to
$\xi$. Finally, let $\ms E^x_N = Q^x_N \setminus \Delta_N$, $J_N = \{
M_N , \dots, N-M_N\}$, and $\ms B^x_N = w_x + J_N^2\times \big( (-1)^x
J_N\big)^{d-2}$. Note that $\ms B^x_N \subset \ms E^x_N$.  We refer to
Figure \ref{fig1} for an illustration of these sets.

Assumptions (H1) and (H2) for this model follow from the
arguments presented in \cite[Proposition 8.3]{bl9}.  Condition
\eqref{03} follows from Lemma \ref{l04}.

\smallskip\noindent{\sl A. First condition of Lemma \ref{l04}.}  If
$d\ge 3$, this condition follows easily from Lemma \ref{l06a}. Indeed,
since the mixing time of a random walk on a $d$-dimensional cube of
length $N$ is of order $N^2$, condition \eqref{tracemixing2} is an
easy consequence of \eqref{tracemixestimate}.  The following argument
also works for $d=2$.

Fix $\delta>0$, $\eta\in \ms E^0_N$, and recall that we denote by $\ms
C$ the set of corners. Let $\varepsilon_N\ll 1$ be a sequence such
that $N^2 \ll \varepsilon_N\, \theta_N$.  By equation (6.18) in
\cite{jlt1},
\begin{equation}
\label{l3-18}
\lim_{N\to \infty} \max_{\xi\in \ms E_N} 
\bb P_\xi \big[ H_{\ms C} \le \varepsilon_N \big] \;=\; 0\;.
\end{equation}
We may therefore assume that the process $\xi^N(t)$ does not hit $\ms
C$ before $\varepsilon_N$. On this event, we may couple $\xi^N(t)$ with a
speeded-up random walk $\widehat{\xi}_N(t)$ on $I^d_N$, and $\xi^N(t)$ hits $\ms B_N^x$ 
when $\widehat{\xi}_N(t)$ hits $J_N^d$. By Theorem 5 in \cite{aldous} applied to $\widehat{\xi}_N(t)$,
\[
\mu_N^x(\ms B_N^x)\sup_{\xi\in\ms E_N^x}\bb E_\xi \big[H_{\ms B_N^x};\ H_{\ms C}>\varepsilon_N\big]\le C_0N^2\theta_N^{-1}.
\]

Since $\mu_N^x(\ms B_N^x)\ge c_0>0$ and $\theta_N\varepsilon_N \gg
N^2$, this proves that
\[
\lim_{N\to\infty}\sup_{\xi\in\ms E_N^x}\bb P_\xi\big[H_{\ms B_N^x}>\varepsilon_N\big]=0,
\]
and in particular the first condition of Lemma \ref{l04}.

\smallskip\noindent{\sl B. Second condition of Lemma \ref{l04}.}  The
argument is based on the fact that the process relaxes to equilibrium
inside each cube much before it hits the corners.  Fix $\delta>0$,
$\delta<s<3\delta$, $\eta\in \ms E^0_N$, and let $\varepsilon_N$ be as
in A, i.e. $N^2 \ll \varepsilon_N\, \theta_N \ll \theta_N$.
By \eqref{l3-18}, we may insert the
event $\{H_{\ms C} > \varepsilon_N \}$ inside the probability
appearing in the second displayed equation in Lemma \ref{l04}. After
this operation, applying the Markov property, the probability becomes
\begin{equation*}
\bb E_\eta \Big[ \mb 1\{ H_{\ms C} > \varepsilon_N  \}\, 
\bb P_{\xi^N(\varepsilon)}  \big[ \xi^N(s-\varepsilon_N ) \in \Delta_N
\big]\, \Big]\;.
\end{equation*}
On the set $\{ H_{\ms C} > \varepsilon_N \}$, we may couple the process
$\xi^N(t)$ with the speeded-up, random walk reflected at
$Q^0_N$. Denote by $\bb P^0_N$ the distribution with respect to this
dynamics and by $\bb E^0_N$ the expectation.

Up to this point we proved that
\begin{equation*}
\limsup_{N\to\infty} \bb P_\eta \big[ \xi^N(s) \in \Delta_N \big]
\;\le\;
\limsup_{N\to\infty} \bb E^0_\eta \Big[ 
\bb P_{\xi^N(\varepsilon_N)}  \big[ \xi^N(s-\varepsilon_N ) \in \Delta_N
\big]\, \Big]\;.
\end{equation*}
Since the mixing time of the (speeded-up) random walk on $Q^0_N$ is of
order $N^2/\theta_N \ll \varepsilon_N$, the previous expression is
bounded by
\begin{equation*}
\limsup_{N\to\infty} \bb P_{\mu^0_N}  \big[ \xi^N(s-\varepsilon_N ) \in \Delta_N
\big] \;,
\end{equation*}
where $\mu^0_N$ is the stationary state of the reflected random
walk. As $\mu^0_N (\eta) \le C_0 \mu_N (\eta)$, and since $\mu_N$ is
the stationary state, the previous expression is bounded by
\begin{equation*}
C_0 \limsup_{N\to\infty} \bb P_{\mu_N}  \big[ \xi^N(s-\varepsilon_N ) \in \Delta_N
\big] \;=\;
C_0 \limsup_{N\to\infty} \mu_N  [  \Delta_N ] \; =\; 0\;,
\end{equation*}
which completes the proof of the second condition of Lemma \ref{l04}.

\medskip

The convergence of the finite-dimensional distributions has been
addressed in \cite{bl9}. We now turn to the assumptions of Proposition \ref{p02}. 
Condition (M1) has been proved above in A. We show below that (M2) is in force in dimension $d\ge 3$.

\smallskip\noindent{\sl C. Condition \eqref{b04}.} Recall from
\eqref{l3-17} that $N^2 \ll M^d_N/\ell^{d-2}_N$.  Let $\varepsilon_N$
be a sequence such that $N^2 \ll \varepsilon_N \, \theta_N \ll
M^d_N/\ell^{d-2}_N$.

Fix $\eta\in \ms B^0_N$. Up to the hitting time of the set $\Delta_N$
the process $\xi^N(t)$ behaves as the chain $\widehat{\xi}_N(t)$
introduced below \eqref{l3-18}. It is therefore enough to prove
condition \eqref{b04} for this latter process. Let $\Delta^{(1)}_N$,
$\Delta^{(2)}_N$ be the simplexes given by
\begin{align*}
& \Delta^{(1)}_N \;=\; \{x\in \bb Z^d : x_i \ge 0 \,,\,
\sum_i x_i \le \ell_N\}\;, \\
&\quad \Delta^{(2)}_N \;=\; (N,0,\bs 0) + 
\{(y,x)\in \bb Z \times \bb Z^{d-1} : y\le 0\,,\, x_i \ge 0 \,,\,
-y + \sum_i x_i \le \ell_N\}\;.
\end{align*}
We have to show that for $i=1$, $2$,
\begin{equation}
\label{l3-14}
\lim_{N\to \infty} \max_{\eta\in {J_N^d}}
\mb P_\eta \big[ H_{\Delta^{(i)}_N} \le \varepsilon_N \big] \;=\; 0\;,
\end{equation}
where $\mb P_\eta$ stands for the distribution of $\widehat \xi_N(t)$ starting from $\eta$. 
By symmetry, it suffices to do so for $i=1$.

Set $\gamma_N = \varepsilon^{-1}_N$, and denote by
$\zeta^\star_N(t)$ the $\gamma_N$-enlargement of the process $\widehat
\xi_N(t)$. We refer to Section \ref{sec04} for the definition of the
enlargement and the statement of some properties. Denote by $\mb
P^\star_\eta$ the distribution of the process $\zeta^\star_N(t)$
starting from $\eta$, and by $V^\star$ the equilibrium potential
between $\Delta^{(1)}_N$ and $\mb E^\star_N$: $V^\star(\eta) = \mb
P^\star_\eta \big[ H_{\Delta^{(1)}_N} \le H_{\mb E^\star_N} \big]$.
By \eqref{lv3-02}, \eqref{l3-14} follows from
\begin{equation}
\label{l3-16}
\lim_{N\to \infty} \max_{\eta\in {J_N^d}} V^\star(\eta) \;=\; 0\;.
\end{equation}

To bound the equilibrium potential $V^\star$, we follow a strategy
proposed in \cite{bl9}. We first claim that
\begin{equation}
\label{l3-15}
\Cap^\star_N (\Delta^{(1)}_N, \mb E^\star_N) \;\le\; 
\frac{ C_0\, \ell^d_N }{N^d}\, \Big\{ \frac {\theta_N}{\ell^2_N} \,+\,
\gamma_N\Big\} \;, 
\end{equation}

Fix $L_N = 2\ell_N$, and let $f: \bb N \to \bb R_+$ the function given
by $f(k) = 1$ for $0\le k< \ell_N$, $f(k) = 0$ for $k\ge L_N$ and $f(k) =
A \sum_{k\le j <L_N} j^{-(d-1)}$ for $\ell_N \le k<L_N$, where $A$ is chosen
for $f(\ell_N)=1$. Let $F:\mb E_N \to \bb R$, $F^\star:\mb E_N \sqcup \mb
E^\star_N \to \bb R$ be given by $F(x) = f(\sum_{1\le i\le d} x_i)$,
$F^\star(\eta) = F(\eta)$, $\eta\in \mb E_N$, $F^\star(\eta) = 0$,
$\eta\in \mb E^\star_N$. By the Dirichlet principle, $\Cap^\star_N
(\Delta^{(1)}_N, \mb E^\star_N) \le D^\star_N(F^\star)$, where
$D^\star_N$ represents the Dirichlet form of the enlarged process
$\zeta^\star_N(t)$.

There are two contributions to the Dirichlet form
$D^\star_N(F^\star)$. The first one corresponds to edges 
whose vertices belong to the set $
\Lambda_N = \{x \in \mb E_N : \ell_N \le \sum_i x_i \le L_N\}$. This
contribution is bounded by
\begin{equation*}
\frac{C_0\, \theta_N}{N^d} \sum_{j=\ell_N}^{L_N} j^{d-1} \big[ f(j+1) - f(j)\big]^2
\;\le\; \frac{C_0 \, \theta_N\, \ell^{d-2}_N}{N^d}\;. 
\end{equation*}
The other contribution, is due to the edges between the sets $\Lambda_N$
and $\Lambda^\star_N$. Since $F^\star$ is bounded by 1, this contribution
is bounded by $\frac{1}{4}\gamma_N\mu_N(\Lambda_N)\le C_0 \gamma_N \ell^d_N
/N^d$.  This completes the proof of \eqref{l3-15}. 

We turn to \eqref{l3-16}. Let $\prec$ be the partial order on $J_N^d$ 
defined by $\eta\prec \xi$ if $\eta_i \le \xi_i$ for $1\le i\le
d$. We may couple two copies of the process $\widehat \xi_N(t)$,
denoted by $\zeta^\eta_N(t)$, $\zeta^\xi_N(t)$, starting from
$\eta\prec\xi$, respectively, in such a way that $\zeta^\eta_N(t)
\prec \zeta^\xi_N(t)$ for all $t\ge 0$. In particular,
$\zeta^\eta_N(t)$ hits $\Delta^{(1)}_N$ before $\zeta^\xi_N(t)$, so
that
\begin{equation*}
V^\star(\eta) \;=\; \mb P^\star_\eta \big[ H_{\Delta^{(1)}_N} \le
H_{\mb E^\star_N} \big] \;\ge\; \mb P^\star_\xi \big[ H_{\Delta^{(1)}_N} \le
H_{\mb E^\star_N} \big] \;=\; V^\star(\xi)\;.
\end{equation*}
Suppose that \eqref{l3-16} does not hold. There exists, therefore,
$\delta>0$, a subsequence $N_j$, still denoted by $N$, and a
configuration $\eta^N\in{J_N^d}$ such that $V^\star(\eta^N)\ge
\delta$. By the previous inequality and by definition of ${J_N^d}$,
$V^\star(\xi)\ge \delta$ for all $\xi$ such that $\max_i \xi_i \le
M_N$. In particular,
\begin{equation*}
\Cap^\star_N (\Delta^{(1)}_N, \mb E^\star_N)  \;=\; D^\star_N(V^\star) 
\;\ge\; c_0\, M^d_N \, \frac{\gamma_N}{N^d}\, \delta^2\;.
\end{equation*}
Comparing this bound with \eqref{l3-15} we deduce that $\delta^2 \,
\gamma_N \, M^d_N \le C_0 \ell^{d-2}_N\, \theta_N$, which is a
contradiction since $\gamma_N = \varepsilon^{-1}_N$ and $\varepsilon_N
\, \theta_N \ll M^d_N/\ell^{d-2}_N$.

\smallskip\noindent{\sl D. Condition \eqref{b07}.}  It is well known
that the mixing time of a random walk on a $d$-dimensional cube of
length $N$ is of order $N^2$, which proves that condition \eqref{b07}
is fulfilled since $\varepsilon_N \, \theta_N \gg N^2$.

\smallskip\noindent{\sl E. Last conditions of Proposition \ref{p02}.}
Condition \eqref{b09a} is clearly in force by definition of
$\Delta_N$. On the other hand the chain is reversible.

\section{Appendix}
\label{sec04}

We present in this section a general estimate for the hitting time of
a set in Markovian dynamics. Fix a finite set $E$ and let $\{\eta(t) :
t\ge 0\}$ be a continuous-time, irreducible, $E$-valued Markov
chain. Denote by $\pi$ the unique stationary state of the process, by
$R(\eta, \xi)$, $\eta$, $\xi\in E$ its jump rates, and by $\bb P_\eta$
its distribution starting from $\eta$.

We start with an elementary lemma.

\begin{lemma}
\label{lv3-l2}
Let $X$, $T_\gamma$ be two independent random variables defined on some
probability space $(\Omega, \mc F, P)$. Assume that $T_\gamma$ has an
exponential distribution of parameter $\gamma>0$. Then, for all $b>0$,
\begin{equation*}
P \big[ X \le b \big] \;\le\; e^{\gamma b} \, P \big[ X \le T_\gamma
\big]\;. 
\end{equation*}
\end{lemma}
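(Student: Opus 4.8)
The plan is to exploit the independence of $X$ and $T_\gamma$ together with a simple inclusion of events. The key observation is that on the event $\{X \le b\} \cap \{T_\gamma \ge b\}$ one necessarily has $X \le b \le T_\gamma$, so this event is contained in $\{X \le T_\gamma\}$. First I would therefore write
\[
P[X \le T_\gamma] \;\ge\; P[X \le b,\; T_\gamma \ge b]\;,
\]
and then invoke the independence of $X$ and $T_\gamma$ to factor the right-hand side as $P[X \le b]\, P[T_\gamma \ge b]$.

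The remaining step is the elementary identity $P[T_\gamma \ge b] = e^{-\gamma b}$, which holds because $T_\gamma$ is exponentially distributed with parameter $\gamma$ and $b>0$. Substituting this into the previous display and multiplying through by $e^{\gamma b}$ yields
\[
P[X \le b] \;\le\; e^{\gamma b}\, P[X \le T_\gamma]\;,
\]
which is the claim. I do not anticipate any genuine obstacle here: the argument uses only the containment of events and the law of $T_\gamma$, so no integrability, continuity, or sign assumption on $X$ is required, and the bound holds for an arbitrary (possibly signed) random variable $X$. The only point meriting a moment's care is to record that the inclusion of events, rather than a conditioning computation, is what makes the hypotheses so light; one could alternatively condition on $X$ and use $P[X \le T_\gamma \mid X = x] = \min\{1, e^{-\gamma x}\}$, but the event-inclusion route is shorter and avoids splitting into cases according to the sign of $x$.
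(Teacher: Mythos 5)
Your proof is correct and is essentially the paper's argument in disintegrated form: the paper bounds $P[X \le T_\gamma] \ge \int_b^\infty P[X\le t]\,\gamma e^{-\gamma t}\,dt \ge P[X\le b]\,e^{-\gamma b}$, which is exactly your event inclusion $\{X\le b\}\cap\{T_\gamma\ge b\}\subseteq\{X\le T_\gamma\}$ followed by independence and the exponential tail. Both routes yield the same intermediate bound $P[X\le T_\gamma]\ge e^{-\gamma b}\,P[X\le b]$ with no assumptions on $X$ beyond independence, so nothing further is needed.
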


\begin{proof}
Since $X$ and $T_\gamma$ are independent, for every $b>0$,
\begin{equation*}
P \big[ X \le T_\gamma \big] \;\ge\;
\int_{b}^\infty P \big[ X \le t \big] \, \gamma\, e^{-\gamma t}\,
dt \;\ge\; P \big[ X \le b \big] \, 
\int_{b}^\infty \gamma\, e^{-\gamma t}\, dt\;.
\end{equation*}
The last term is equal to $ e^{-\gamma b} \, P \big[ X \le b \big]$,
which completes the proof of the lemma. 
\end{proof}

Note that if $X$ is an exponential random variable of parameter
$\theta$, the inequality reduces to
\begin{equation*}
1\,-\, e^{-\theta b} \;\le\; e^{\gamma b}\, \frac \theta {\theta +
  \gamma}\;\cdot
\end{equation*}
Hence, choosing $\gamma=1/b$, if $\theta b$ is small, the inequality
is sharp in the sense that the left-hand side is equal to $\theta\, b
+ O([\theta\, b]^2)$, while the right-hand side is equal to $e \,
\theta\, b + O([\theta\, b]^2)$.

\smallskip\noindent{\bf Enlargement of a chain} \cite{biagau2016,
  bl9}. Let $E^\star$ be a copy of $E$ and denote by $\eta^\star\in
E^\star$ the copy of $\eta\in E$. Denote by $\xi^\gamma (t)$, $\gamma
>0$, the Markov process on $E \sqcup E^\star$ whose jump rates $R^\gamma
(\eta,\xi)$ are given by
\begin{equation*}
R^\gamma (\eta,\xi) \;=\; 
\begin{cases}
R(\eta,\xi) & \text{if $\eta$ and $\xi\in E$,} \\
\gamma & \text{if $\xi = \eta^\star$ or $\eta = \xi^\star$,} \\
0 & \text{otherwise.} 
\end{cases}
\end{equation*}
Hence, being at some state $\xi^\star$ in $E^\star$, the process
may only jump to $\xi$ and this happens at rate $\gamma$. In contrast,
being at some state $\xi$ in $E$, the process $\xi^\gamma (t)$ jumps
with rate $R (\xi, \xi')$ to some state $\xi'\in E$, and jumps with
rate $\gamma$ to $\xi^\star$.  We call the process $\xi^\gamma (t)$
the \emph{$\gamma$-enlargement} of the process $\xi(t)$. Note that the
trace of the enlargement $\xi^\gamma (t)$ on $E$ coincides with the
original process $\xi (t)$.

The chain $\xi^\gamma (t)$ is clearly irreducible and its invariant
probability measure, denoted by $\pi^\star$, is given by 
\begin{equation}
\label{l3-05}
\pi^\star(\xi) \;=\; (1/2)\, \pi(\xi) \;, \quad \xi\in E\;, \quad
\pi^\star(\xi^\star) \;=\; \pi^\star(\xi)\;, \quad \xi^\star\in E^\star\;.
\end{equation}
The process $\xi^\gamma (t)$ reversed in time is the Markov chain,
denoted by $\xi^{\gamma, *} (t)$, whose jump rates $R^{\gamma, *}$ are
given by
\begin{equation*}
R^{\gamma, *} (\eta,\xi) \;=\; 
\begin{cases}
R^*(\eta,\xi) & \text{if $\eta$ and $\xi\in E$,} \\
\gamma & \text{if $\xi = \eta^\star$ or $\eta = \xi^\star$,} \\
0 & \text{otherwise,} 
\end{cases}
\end{equation*}
where $R^*(\eta,\xi)$ represents the jump rates of the process
$\xi(t)$ reversed in time.

Denote by $\bb P^\star_\eta$ the distribution of the chain $\xi^\gamma
(t)$ starting from $\eta$, and by $\Cap^\star (\ms C, \ms D)$ the
capacity between two disjoint subsets $\ms C$, $\ms D$ of $E \sqcup
E^\star$. 
\begin{lemma}
Fix two disjoint subsets $\ms A$, $\ms B$ of $E$. Then
\begin{equation}
\label{l3-20}
\Cap^\star (\ms A, \ms B) \;=\; (1/2) \, \Cap (\ms A, \ms B)\;. 
\end{equation}
and
\begin{equation}
\label{l3-20b}
\Cap^\star (\ms A, \ms B \sqcup E^\star) \;\ge\; (1/2) \, \big(\pi(\ms A)\gamma+\Cap (\ms A, \ms B)\big)\;. 
\end{equation}
\end{lemma}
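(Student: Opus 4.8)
The plan is to work directly from the probabilistic definition of capacity in \eqref{l3-19}, avoiding the Dirichlet principle so that the argument stays valid in the non-reversible setting. Write $\lambda(\eta)=\sum_{\xi\neq\eta}R(\eta,\xi)$ for the holding rates of $\xi(t)$, and recall from the construction of the enlargement that, for $\eta\in E$, the holding rate of $\xi^\gamma(t)$ is $\lambda^\star(\eta)=\lambda(\eta)+\gamma$, while by \eqref{l3-05} one has $\pi^\star(\eta)=\tfrac12\pi(\eta)$. The structural fact I will use repeatedly is that an excursion of $\xi^\gamma(t)$ into $E^\star$ consists of a single jump $\zeta\mapsto\zeta^\star$ followed, after an exponential holding time, by the return jump $\zeta^\star\mapsto\zeta$; thus $E^\star$-excursions are mere pauses that return to their departure site, and the trace of $\xi^\gamma(t)$ on $E$ is exactly $\xi(t)$.

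For \eqref{l3-20}, since $\ms A,\ms B\subset E$ are invisible to $E^\star$, I first claim that for every $\xi\in E$,
\begin{equation*}
\bb P^\star_\xi[H_{\ms B}<H_{\ms A}]\;=\;\bb P_\xi[H_{\ms B}<H_{\ms A}]\;.
\end{equation*}
Indeed, the order in which $\ms A$ and $\ms B$ are visited depends only on the sequence of sites in $E$ traversed by $\xi^\gamma(t)$, and this sequence is governed by the trace on $E$, namely $\xi(t)$. I then decompose according to the first jump out of $\eta\in\ms A$: with probability $\gamma/\lambda^\star(\eta)$ it lands in $\eta^\star$, whence the chain returns to $\eta\in\ms A$ and the event $\{H_{\ms B}<H^+_{\ms A}\}$ fails; with the complementary probability it lands in some $\xi\in E$, where the identity above applies. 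Summing over the first jump gives $\bb P^\star_\eta[H_{\ms B}<H^+_{\ms A}]=\lambda(\eta)\lambda^\star(\eta)^{-1}\,\bb P_\eta[H_{\ms B}<H^+_{\ms A}]$. Weighting by $\pi^\star(\eta)\lambda^\star(\eta)$ and using $\pi^\star(\eta)=\tfrac12\pi(\eta)$, the factor $\lambda^\star(\eta)$ cancels, leaving $\tfrac12\pi(\eta)\lambda(\eta)\bb P_\eta[H_{\ms B}<H^+_{\ms A}]$; summing over $\eta\in\ms A$ yields $\Cap^\star(\ms A,\ms B)=\tfrac12\Cap(\ms A,\ms B)$.

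For \eqref{l3-20b} the target now contains $E^\star$, and two features change. A first jump from $\eta\in\ms A$ into $\eta^\star$ now lands directly in the target, contributing the weight $\gamma$ rather than $0$, which produces the term $\tfrac12\gamma\,\pi(\ms A)$. For a first jump into some $\xi\in E$, I replace the identity above by the inequality
\begin{equation*}
\bb P^\star_\xi[H_{\ms B\sqcup E^\star}<H_{\ms A}]\;\ge\;\bb P_\xi[H_{\ms B}<H_{\ms A}]\;,
\end{equation*}
which I intend to prove by a pathwise comparison: on the event that the trace hits $\ms B$ before $\ms A$, either $\xi^\gamma(t)$ reaches that $\ms B$-site with no prior $E^\star$-excursion, so $H_{\ms B\sqcup E^\star}=H_{\ms B}<H_{\ms A}$, or it makes an $E^\star$-excursion beforehand, necessarily from an $E$-site not yet in $\ms A$, so that $H_{E^\star}<H_{\ms A}$ and again $H_{\ms B\sqcup E^\star}<H_{\ms A}$. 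Carrying out the same first-jump decomposition and using $\lambda^\star(\eta)=\lambda(\eta)+\gamma$, I obtain $\lambda^\star(\eta)\,\bb P^\star_\eta[H_{\ms B\sqcup E^\star}<H^+_{\ms A}]\ge\gamma+\lambda(\eta)\,\bb P_\eta[H_{\ms B}<H^+_{\ms A}]$; weighting by $\pi^\star(\eta)=\tfrac12\pi(\eta)$ and summing over $\eta\in\ms A$ gives the claimed lower bound.

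I expect the only delicate point to be the pathwise inequality, i.e. the assertion that an $E^\star$-excursion occurring before the trace reaches $\ms A$ must depart from a site outside $\ms A$, so that it already counts as hitting the enlarged target before $\ms A$. Once this bookkeeping of the order of visits is made precise through the trace identity, both displays follow from the elementary first-jump decomposition and the measure relation $\pi^\star=\tfrac12\pi$ on $E$.
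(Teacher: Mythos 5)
Your proof is correct, but it is organized differently from the paper's, most visibly in the first assertion. For \eqref{l3-20} the paper does not decompose over the first jump: it invokes the identity $\Cap^\star (\ms A, \ms B) = D^\star(V^\star_{\ms A,\ms B})$ (equation (2.6) in \cite{gaudl2014}), observes that $V^\star_{\ms A,\ms B}(\eta^\star) = V^\star_{\ms A,\ms B}(\eta)$ so the $E$--$E^\star$ edges contribute nothing to the Dirichlet form, identifies the equilibrium potential on $E$ with that of the original chain via the trace, and extracts the factor $1/2$ from $\pi^\star = \tfrac12 \pi$. Your first-jump computation --- the holding rate $\lambda^\star(\eta)$ in the capacity cancelling against the probability $\lambda(\eta)/\lambda^\star(\eta)$ of not pausing in $E^\star$ --- reaches the same identity without citing the Dirichlet representation, so your part is self-contained and manifestly insensitive to reversibility. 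For \eqref{l3-20b} the two arguments are close in spirit: the paper performs the same first-jump decomposition, but with the smaller target $\ms B \sqcup \ms A^\star$, where it obtains an \emph{exact} identity $\Cap^\star(\ms A, \ms B\sqcup \ms A^\star) = \tfrac12\gamma\,\pi(\ms A) + \tfrac12\Cap(\ms A,\ms B)$ --- the key observation being that $\bb P^\star_\xi[H_{\ms A^\star} > H_{\ms A}]=1$ for $\xi\in E$, since states of $\ms A^\star$ are accessible only from $\ms A$ --- and then concludes by monotonicity of capacities in the target set. You instead work directly with the full target $\ms B\sqcup E^\star$ and settle for an inequality, which spares you both the auxiliary set $\ms A^\star$ and the monotonicity step, at the price of losing the equality the paper has for the intermediate set. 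One remark on the step you flag as delicate: no excursion bookkeeping is needed. Since $H_{\ms B\sqcup E^\star} \le H_{\ms B}$ pointwise, one has $\{H_{\ms B} < H_{\ms A}\} \subset \{H_{\ms B\sqcup E^\star} < H_{\ms A}\}$ under $\bb P^\star_\xi$, and $\bb P^\star_\xi[H_{\ms B} < H_{\ms A}] = \bb P_\xi[H_{\ms B} < H_{\ms A}]$ by the trace identity you already established for the first display; these two facts give your pathwise inequality in one line.
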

\begin{proof}
By equation (2.6) in \cite{gaudl2014}, 
\begin{equation*}
\Cap^\star (\ms A, \ms B) \;=\; 
D^\star (V^\star_{\ms A, \ms B}) \;=\; 
\frac 12 \sum_{\eta , \xi \in E \sqcup E^\star} \pi^\star(\eta)\,
R^\star(\eta,\xi)\, [V^\star_{\ms A, \ms B} (\xi) - 
V^\star_{\ms A, \ms B} (\eta)]^2\;,
\end{equation*}
where $D^\star(f)$ represents the Dirichlet form of a function $f: E
\sqcup E^\star\to\bb R$ for the enlarged process, and $V^\star_{\ms C,
  \ms D}$ the equilibrium potential between two disjoints subsets $\ms
C$, $\ms D$ of $E \sqcup E^\star$: $V^\star_{\ms C, \ms D} (\eta) = \bb
P^\star_\eta[H_{\ms C} < H_{\ms D}]$. On the one hand, by definition
of the enlargement, for every $\eta\in E$, $V^\star_{\ms A, \ms B}
(\eta^\star) = V^\star_{\ms A, \ms B} (\eta)$. Hence, the contribution
to the Dirichlet form $D^\star(V^\star_{\ms A, \ms B})$ of the edges
between $E$ and $E^\star$ vanishes.  On the other hand, since the
trace of the enlargement $\xi^\gamma (t)$ on $E$ coincides with the
original process $\xi (t)$, for all $\eta\in E$, $V^\star_{\ms A, \ms
  B} (\eta) = \bb P^\star_\eta[H_{\ms A} < H_{\ms B}] = \bb
P_\eta[H_{\ms A} < H_{\ms B}] = V_{\ms A, \ms B} (\eta)$. Hence, the
sum appearing on the right-hand side of the previous displayed
equation is equal to
\begin{equation*}
\frac 12 \sum_{\eta , \xi \in E} \pi^\star(\eta)\,
R^\star(\eta,\xi) [V_{\ms A, \ms B} (\xi) - 
V_{\ms A, \ms B} (\eta)]^2\;.
\end{equation*}
Since, for $\eta$, $\xi \in E$, $R^\star(\eta,\xi) = R(\eta,\xi)$,
$\pi^\star(\eta) = (1/2) \pi (\eta)$, the previous sum is equal to
\begin{equation*}
\frac 14 \sum_{\eta , \xi \in E} \pi(\eta)\,
R (\eta,\xi) [V_{\ms A, \ms B} (\xi) - 
V_{\ms A, \ms B} (\eta)]^2 \;=\; \frac 12 \, D (V_{\ms A, \ms B})
\;=\; \frac 12 \, \Cap (\ms A, \ms B) \;,
\end{equation*}
as claimed in \eqref{l3-20}.\\[2mm]
Let $\ms A^\star=\{\xi^\star\in E^\star:\ \xi\in \ms A\}$ and
$\lambda^\star(\eta)$ stand for the holding rate of $\xi^\gamma(t)$ at
$\eta$. We have
\begin{align*}
\Cap^\star (\ms A, \ms B\sqcup \ms A^\star)&=\sum_{\eta\in \ms A}\pi^\star(\eta)\lambda^\star(\eta)\bb P^\star_\eta\big[ H_{\ms B\sqcup \ms A^\star}<H_{\ms A}^+\big]\\
&=\frac{1}{2}\sum_{\eta\in\ms A} \pi(\eta)\sum_{\xi\in E\sqcup E^\star} R^{\gamma}(\eta,\xi)\bb P^\star_\xi\big[ H_{\ms B\sqcup \ms A^\star}<H_{\ms A}\big]\\
&=\frac{1}{2}\gamma\pi(\ms A)+\frac{1}{2}\sum_{\eta\in\ms A} \pi(\eta)\sum_{\xi\in E} R(\eta,\xi)\bb P^\star_\xi\big[ H_{\ms B\sqcup \ms A^\star}<H_{\ms A}\big],
\end{align*}
where in the last equality we have split the inner sum over $\xi \in \ms A^\star$ and $\xi \in E$. Taking into account that for every $\xi\in E$ we have $\bb P^\star_\xi\big[H_{\ms A^\star}>H_{\ms A}\big]=1$ because points $\eta^\star\in\ms A^\star$ are only accessible from $\eta\in\ms A$, the preceding computation gives
\begin{align*}
\Cap^\star (\ms A, \ms B\sqcup \ms A^\star) &=\frac{1}{2}\gamma\pi(\ms A)+\frac{1}{2}\sum_{\eta\in\ms A} \pi(\eta)\sum_{\xi\in E} R(\eta,\xi)\bb P^\star_\xi\big[ H_{\ms B}<H_{\ms A}\big]\\
&=\frac{1}{2}\gamma\pi(\ms A)+\frac{1}{2}\sum_{\eta\in\ms A} \pi(\eta)\sum_{\xi\in E} R(\eta,\xi)\bb P_\xi\big[ H_{\ms B}<H_{\ms A}\big]\\
&=\frac{1}{2}\gamma\pi(\ms A)+\frac{1}{2}\Cap(\ms A, \ms B).
\end{align*}
Inequality \eqref{l3-20b} now follows by monotonicity of capacities.
\end{proof}
Denote by $\nu^\star_{\ms A, \ms B}$ the equilibrium measure between
$\ms A$, $\ms B$ for the chain $\xi^\gamma (t)$, which is concentrated
on the set $\ms A$ and is given by
\begin{equation}
\label{lv3-03}
\nu^\star_{\ms A, \ms B} (\eta) \;=\; \frac{1}{\Cap^\star (\ms A, \ms B)}
\, \pi^\star (\eta)\, \lambda^\star (\eta) \, \bb P^\star_\eta \big[
H_{\ms B} < H^+_{\ms A}\big] \;.
\end{equation}
If $\ms A$ is a set with small measure with respect to the stationary
measure, it is expected that, for most configurations $\eta\in E$,
$H_{\ms A}$ is approximately exponentially distributed under $\bb
P_\eta$. Let $\lambda^{-1}$ be its expectation, so that $\bb P_\eta
\big[ H_{\ms A} \le b \big] \approx 1 - \exp\{- b \lambda\} \approx b
\lambda$, provided $b\lambda \ll 1$. On the one hand, by
\cite[Proposition A.2]{bl4},
\begin{equation*}
\lambda^{-1} \;\approx\; \bb E_\eta \big[ H_{\ms A} \big] \;=\;
\frac{ \< V^*_{\eta, \ms A} \>_\pi}{\Cap (\eta, \ms A)} \;,
\end{equation*}
where $V^*_{\eta, \ms A}$ is the equilibrium potential between $\eta$
and $\ms A$ for the time-reversed dynamics, and $\Cap (\eta, \ms A)$
the capacity between $\eta$ and $\ms A$.  If $\< V^*_{\eta, \ms A}
\>_\pi \approx 1$ (for instance, because $\pi(\eta)\approx 1$), we
conclude that $\lambda \approx \Cap (\eta, \ms A)$. On the other hand,
choosing $\gamma = b^{-1}$ as the parameter for the enlarged process,
for every $\eta\in E$,
\begin{equation*}
b \;=\; \gamma^{-1} \;=\; \bb E^\star_\eta \big[ H_{E^\star} \big]
\;=\; \frac{ \< V^{\star,*}_{\eta, E^\star} \>_{\pi^\star}}{\Cap^\star (\eta,E^\star)} \;\cdot    
\end{equation*}
Once more, if $\< V^{\star,*}_{\eta,E^\star} \>_{\pi^\star} \approx
1$, we conclude that $b^{-1} \approx \Cap^\star (\eta, E^\star)$, so
that
\begin{equation*}
\bb P_\eta \big[ H_{\ms A} \le b \big] \;\approx\; b\lambda  \;\approx\;
\frac {\Cap (\eta, \ms A)}{\Cap^\star (\eta, E^\star)}.
\end{equation*}
The next lemma establishes this estimate. 
\begin{lemma}
\label{lv3-l1}
Fix a proper subset $\ms A$ of $E$. For every $b>0$ and $\eta\in E
\setminus \ms A$,
\begin{equation*}
\bb P_\eta \big[ H_{\ms A} \le b \big] \;\le\; 
\frac{1}{2}e^{\gamma b} \, \frac{\Cap (\eta , \ms A)}
{\Cap^\star (\eta ,  \ms  A\sqcup E^\star)} \;, 
\end{equation*}
and
\begin{equation*}
\bb P_\eta \big[ H_{\ms A} \le b \big] \;\le\; 
e^{\gamma b} \, \frac 1{\gamma\, \pi^\star(\eta)} 
\, \Cap^\star (\ms A, E^\star)\;.
\end{equation*}
\end{lemma}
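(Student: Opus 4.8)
The plan is to pass to the $\gamma$-enlargement $\xi^\gamma(t)$ and exploit the structural fact that, for this chain, the passage to the copy $E^\star$ occurs after an independent exponential time of parameter $\gamma$. The first observation I would record is that, since the only way for $\xi^\gamma(t)$ to enter $E^\star$ is to jump at rate $\gamma$ from a state $\zeta\in E$ to its own copy $\zeta^\star$, the superposition/thinning property makes $H_{E^\star}$, under $\bb P^\star_\eta$, an exponential variable of parameter $\gamma$ which is independent of the $E$-trajectory run up to $H_{E^\star}$; moreover that $E$-trajectory is distributed as the original chain $\xi(t)$, because the trace of $\xi^\gamma(t)$ on $E$ is $\xi(t)$. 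Hence, for $\eta\in E\setminus\ms A$,
\[
\bb P^\star_\eta[H_{\ms A}<H_{E^\star}] \;=\; \bb P_\eta[H_{\ms A}\le T_\gamma]\;,
\]
with $T_\gamma$ an independent $\mathrm{Exp}(\gamma)$ variable, and Lemma \ref{lv3-l2} applied to $X=H_{\ms A}$ gives $\bb P_\eta[H_{\ms A}\le b]\le e^{\gamma b}\,\bb P^\star_\eta[H_{\ms A}<H_{E^\star}]$. It then suffices to bound $\bb P^\star_\eta[H_{\ms A}<H_{E^\star}]$ by each of the two capacity ratios.

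For the first inequality I would use a renewal at $\eta$. Writing $p_A$, $p_\star$, $p_0$ for the probabilities that the excursion from $\eta$ reaches $\ms A$, reaches $E^\star$, or returns to $\eta$ before the other two events, the strong Markov property at $\eta$ yields $\bb P^\star_\eta[H_{\ms A}<H_{E^\star}]=p_A/(p_A+p_\star)$, whereas the definition \eqref{l3-19} applied to the enlarged chain gives $\Cap^\star(\eta,\ms A\sqcup E^\star)=\pi^\star(\eta)\,\lambda^\star(\eta)\,(p_A+p_\star)$. A first-jump decomposition then produces $\lambda^\star(\eta)\,p_A=\sum_{\xi\in E}R(\eta,\xi)\,\bb P^\star_\xi[H_{\ms A}<H_\eta\wedge H_{E^\star}]$; dropping the constraint on $H_{E^\star}$ and using again that the $E$-trace is the original chain, so that $\bb P^\star_\xi[H_{\ms A}<H_\eta]=\bb P_\xi[H_{\ms A}<H_\eta]$, I obtain $\lambda^\star(\eta)\,p_A\le\lambda(\eta)\,\bb P_\eta[H_{\ms A}<H^+_\eta]$. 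Since $\pi^\star(\eta)=\tfrac12\pi(\eta)$ by \eqref{l3-05}, this reads $\pi^\star(\eta)\,\lambda^\star(\eta)\,p_A\le\tfrac12\Cap(\eta,\ms A)$, and the first bound follows.

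For the second inequality I would argue by last exits and time reversal. With the killed Green's function $G^\star(\eta,\zeta)=\bb E^\star_\eta\big[\int_0^{H_{E^\star}}\mb 1\{\xi^\gamma(s)=\zeta\}\,ds\big]$ on $E$, the last-exit decomposition gives $\bb P^\star_\eta[H_{\ms A}<H_{E^\star}]=\sum_{\zeta\in\ms A}G^\star(\eta,\zeta)\,\lambda^\star(\zeta)\,\bb P^\star_\zeta[H_{E^\star}<H^+_{\ms A}]$. The reversal identity $\pi^\star(\eta)\,G^\star(\eta,\zeta)=\pi^\star(\zeta)\,G^{\star,*}(\zeta,\eta)$ together with the fact that $H_{E^\star}$ is again $\mathrm{Exp}(\gamma)$ for the reversed enlargement (its rates $R^{\gamma,*}$ carry the same rate $\gamma$ into $E^\star$), whence $\sum_{\eta'}G^{\star,*}(\zeta,\eta')=\bb E^{\star,*}_\zeta[H_{E^\star}]=1/\gamma$, yields $\gamma\,\pi^\star(\eta)\,G^\star(\eta,\zeta)=\gamma\,\pi^\star(\zeta)\,G^{\star,*}(\zeta,\eta)\le\pi^\star(\zeta)$. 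Summing over $\zeta\in\ms A$ and recognizing $\Cap^\star(\ms A,E^\star)=\sum_{\zeta\in\ms A}\pi^\star(\zeta)\,\lambda^\star(\zeta)\,\bb P^\star_\zeta[H_{E^\star}<H^+_{\ms A}]$ gives $\gamma\,\pi^\star(\eta)\,\bb P^\star_\eta[H_{\ms A}<H_{E^\star}]\le\Cap^\star(\ms A,E^\star)$, which is the second bound.

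The routine part is the renewal computation of Step 2; the main obstacle lies in making the structural facts precise, namely that $H_{E^\star}$ is exactly $\mathrm{Exp}(\gamma)$, independent of the $E$-motion, and that this exponentiality persists for the time-reversed enlargement so that the Green's function of Step 3 is uniformly controlled by $1/\gamma$. Once these are established, both estimates drop out of Lemma \ref{lv3-l2} and the elementary decompositions above.
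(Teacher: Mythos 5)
Your proof is correct, and its skeleton coincides with the paper's: both pass to the $\gamma$-enlargement, observe that $H_{E^\star}$ is $\mathrm{Exp}(\gamma)$ and independent of the $E$-motion (because the killing rate $\gamma$ is state-independent on $E$), apply Lemma \ref{lv3-l2} to reduce everything to bounding $\bb P^\star_\eta[H_{\ms A}\le H_{E^\star}]$, and then bound this equilibrium potential in two ways. Where you diverge is in how the two capacity bounds are obtained: the paper simply cites equation (3.3) in \cite{ll} for the bound $V^\star_{\ms A,E^\star}(\eta)\le \Cap^\star(\eta,\ms A)/\Cap^\star(\eta,\ms A\sqcup E^\star)$ and then invokes the identity \eqref{l3-20} to replace $\Cap^\star(\eta,\ms A)$ by $\tfrac12\Cap(\eta,\ms A)$, whereas you reprove this ratio bound from scratch by a renewal/first-jump computation at the singleton $\eta$, with the factor $\tfrac12$ emerging directly from $\pi^\star=\pi/2$ and the fact that hitting orders of subsets of $E$ are trace functionals, so \eqref{l3-20} is never needed. (Your identity $\bb P^\star_\eta[H_{\ms A}<H_{E^\star}]=p_A/(p_A+p_\star)$ and the estimate $\pi^\star(\eta)\lambda^\star(\eta)p_A\le\tfrac12\Cap(\eta,\ms A)$ are both sound; note that $\pi^\star(\eta)\lambda^\star(\eta)p_A$ is even slightly smaller than $\Cap^\star(\eta,\ms A)$, which only helps.) For the second inequality, the paper cites \cite[Proposition A.2]{bl4} to represent $\langle V^\star_{\ms A,E^\star},\mb 1\{\eta\}\rangle_{\pi^\star}$ via the reversed process, then uses that $H_{E^\star}$ is $\mathrm{Exp}(\gamma)$ for the reversed enlargement; your last-exit decomposition plus the reversal identity $\pi^\star(\eta)G^\star(\eta,\zeta)=\pi^\star(\zeta)G^{\star,*}(\zeta,\eta)$ and the bound $G^{\star,*}(\zeta,\eta)\le \bb E^{\star,*}_\zeta[H_{E^\star}]=1/\gamma$ is precisely the content of that cited proposition, unpacked. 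What your route buys is self-containedness and transparency: nothing beyond the strong Markov property is used, and it is visible at every step that no reversibility of the underlying chain is required, which matters since the lemma is applied in non-reversible settings. What the paper's route buys is brevity. One small point of care in your write-up: the last-exit identity deserves a line of justification (each trajectory on $\{H_{\ms A}<H_{E^\star}\}$ contains exactly one jump out of $\ms A$ followed by avoidance of $\ms A$ until $E^\star$, and the expected number of such jumps is the stated occupation-time sum), but this is standard and does not affect correctness.
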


\begin{proof}
Fix a proper subset $\ms A$ of $E$, $b>0$ and $\eta\in E \setminus \ms
A$. Fix $\gamma>0$, and consider the $\gamma$-enlarged
process. Denote by $H_{E^\star}$ the hitting time of the set
$E^\star$. By definition of the enlargement, under $\bb P^\star_\eta$,
$H_{E^\star}$ has an exponential distribution of parameter $\gamma$ and is
independent of $H_{\ms A}$. Hence, by Lemma \ref{lv3-l2},
\begin{equation}
\label{lv3-02}
\bb P_\eta \big[ H_{\ms A} \le b \big] \;\le\; e^{\gamma b} \,
\bb P^\star_\eta \big[ H_{\ms A} \le  H_{E^\star} \big]\;.
\end{equation}
The previous probability is the value of the equilibrium potential
between $\ms A$ and $E^\star$ computed at the configuration $\eta$,
denoted hereafter by $V^\star_{\ms A, E^\star}$. By equation (3.3) in
\cite{ll} and by \eqref{l3-20}, the previous expression is bounded by
\begin{equation*}
e^{\gamma b} \, \frac{\Cap^\star (\eta , \ms A)}
{\Cap^\star (\eta ,  \ms  A\sqcup E^\star)} \;=\;
\frac{1}{2}e^{\gamma b} \, \frac{\Cap (\eta , \ms A)}
{\Cap^\star (\eta ,  \ms  A\sqcup E^\star)} \,\cdot
\end{equation*}
This proves the first assertion of the lemma.

We may also rewrite the right-hand side of \eqref{lv3-02} as
\begin{equation*}
e^{\gamma b} \, \frac 1{\pi^\star(\eta)}  \, \sum_{\zeta\in E\sqcup
  E^\star} V^\star_{\ms A, E^\star} (\zeta) \mb 1\{\eta\}(\zeta)\,
\pi^\star(\zeta)\;,
\end{equation*}
where $\mb 1\{\eta\}$ represents the indicator of the set $\{\eta\}$. 
By \cite[Proposition A.2]{bl4}, the previous sum is equal to
\begin{equation*}
\Cap^\star (\ms A, E^\star)\, \bb E^{\star,*}_{\nu_{\ms A, E^\star}} 
\Big[ \int_0^{H_{E^\star}} \mb 1\{\eta\}(\xi^*(t)) \, dt \Big]\;,
\end{equation*}
where $\bb P^{\star,*}$ represents the distribution of the process
$\xi^\gamma(t)$ reversed in time, and $\nu_{\ms A, E^\star}$ the
equilibrium measure given by \eqref{lv3-03}.  By definition of the
enlarged process, for every initial condition $\eta\in E$,
$H_{E^\star}$ has an exponential distribution of parameter
$\gamma$. The penultimate displayed equation is thus bounded by
$\gamma^{-1} \Cap^\star (\ms A, E^\star)$, which completes the proof
of the lemma.
\end{proof}

Denote by $\partial_+ \ms A$ the exterior boundary of a set $\ms A$:
\begin{equation*}
\partial_+ \ms A \;=\; \big \{\eta\in E\setminus \ms A : 
\pi(\xi) \, R(\xi, \eta) >0
\text{ for some $\xi\in\ms A$ } \big\}\;.
\end{equation*}

\begin{corollary}
\label{lv3-l3}
Fix a proper subset $\ms A$ of $E$. For every $b>0$ and $\eta\in E
\setminus \ms A$,
\begin{equation*}
\bb P_\eta \big[ H_{\ms A} \le b \big] \;\le\; 
 \, \frac {e\, b} {\pi(\eta)} \, \Cap (\eta, \ms A)
\;\le\; 
 \, \frac {e\, b} {2\pi(\eta)} 
\, \sum_{\xi\in \partial_+\ms A} \pi(\xi)\, R(\xi, \ms A)\;,
\end{equation*}
where $R(\xi, \ms A) = \sum_{\zeta\in\ms A} R(\xi,\zeta)$.
\end{corollary}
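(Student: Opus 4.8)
The plan is to combine the exponential (Chernoff-type) estimate furnished by the $\gamma$-enlargement with a bound on the two capacities entering Lemma~\ref{lv3-l1}. In both inequalities I take $\gamma=1/b$, so that $e^{\gamma b}=e$, and specialize the enlargement to this value.

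\emph{First inequality.} I start from the first assertion of Lemma~\ref{lv3-l1}, namely $\bb P_\eta[H_{\ms A}\le b]\le\tfrac12 e^{\gamma b}\,\Cap(\eta,\ms A)/\Cap^\star(\eta,\ms A\sqcup E^\star)$, and bound the denominator from below. Reading \eqref{l3-20b} with $\{\eta\}$ in the role of $\ms A$ and $\ms A$ in the role of $\ms B$ gives $\Cap^\star(\eta,\ms A\sqcup E^\star)\ge\tfrac12\big(\gamma\,\pi(\eta)+\Cap(\eta,\ms A)\big)\ge\tfrac12\,\gamma\,\pi(\eta)$. Substituting and using $\gamma=1/b$, the two factors $\tfrac12$ cancel and I obtain $\bb P_\eta[H_{\ms A}\le b]\le e^{\gamma b}\,\Cap(\eta,\ms A)/(\gamma\pi(\eta))=(e\,b/\pi(\eta))\,\Cap(\eta,\ms A)$, which is the first bound.

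\emph{Second inequality.} It remains to control $\Cap(\eta,\ms A)$ by the flux entering $\ms A$. The key observation is that the boundary sum is itself a capacity: by \eqref{l3-19}, for $\xi\in E\setminus\ms A$ the event $\{H_{\ms A}<H^+_{E\setminus\ms A}\}$ is exactly the event that the first jump lands in $\ms A$, of probability $R(\xi,\ms A)/\lambda(\xi)$, so the holding rates cancel and $\Cap(E\setminus\ms A,\ms A)=\sum_{\xi\in\partial_+\ms A}\pi(\xi)R(\xi,\ms A)$. Since $\{\eta\}\subset E\setminus\ms A$, monotonicity of capacities gives $\Cap(\eta,\ms A)\le\Cap(E\setminus\ms A,\ms A)=\sum_{\xi\in\partial_+\ms A}\pi(\xi)R(\xi,\ms A)$; inserting this in the first bound yields an estimate of the stated boundary form.

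\emph{Main obstacle.} The delicate point is the monotonicity step and the precise constant, which for the general, possibly non-reversible chain of the Appendix I would rather avoid. The robust substitute is a direct stationarity argument: under $\bb P_\pi$ the instantaneous rate of jumps from $E\setminus\ms A$ into $\ms A$ equals $\sum_{\xi\in\partial_+\ms A}\pi(\xi)R(\xi,\ms A)$, so the expected number $N_b$ of such entering jumps on $[0,b]$ is $b\sum_{\xi\in\partial_+\ms A}\pi(\xi)R(\xi,\ms A)$. A trajectory started at $\eta\notin\ms A$ can reach $\ms A$ before time $b$ only through such a jump, hence $\{H_{\ms A}\le b\}\subset\{N_b\ge1\}$; combining $\bb P_\pi[\,\cdot\,]\ge\pi(\eta)\,\bb P_\eta[\,\cdot\,]$ with Markov's inequality then gives $\bb P_\eta[H_{\ms A}\le b]\le b\sum_{\xi\in\partial_+\ms A}\pi(\xi)R(\xi,\ms A)/\pi(\eta)$, comfortably inside the asserted constant.
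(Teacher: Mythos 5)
Your proof of the first inequality follows the paper's route exactly: the first bound of Lemma \ref{lv3-l1}, the lower bound \eqref{l3-20b} applied with $\{\eta\}$ in place of $\ms A$ and $\ms A$ in place of $\ms B$, and the choice $\gamma=b^{-1}$. The paper merely keeps the full denominator, writing $\bb P_\eta[H_{\ms A}\le b]\le e^{\gamma b}\,\Cap(\eta,\ms A)/(\pi(\eta)\gamma+\Cap(\eta,\ms A))$ before discarding the capacity term; this is the same computation.

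For the second inequality you genuinely diverge, and to your advantage. The paper argues precisely along the lines of your first sketch --- monotonicity $\Cap(\eta,\ms A)\le\Cap(E\setminus\ms A,\ms A)$ followed by the first-jump identity --- but asserts $\Cap(E\setminus\ms A,\ms A)=\frac12\sum_{\xi\in\partial_+\ms A}\pi(\xi)R(\xi,\ms A)$. Your version of the identity, \emph{without} the $\frac12$, is the correct one under the definition \eqref{l3-19}: for $\xi\notin\ms A$ one has $\bb P_\xi[H_{\ms A}<H^+_{E\setminus\ms A}]=R(\xi,\ms A)/\lambda(\xi)$, the holding rates cancel, and no factor $\frac12$ appears. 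A two-state chain $E=\{\eta,\zeta\}$, $\ms A=\{\zeta\}$, confirms this: there $\Cap(\eta,\ms A)=\pi(\eta)R(\eta,\zeta)$ equals the boundary sum, so the middle-to-right inequality of the corollary, read literally as a comparison between $\Cap(\eta,\ms A)$ and half the boundary sum, fails; the capacity route only delivers the constant $e$, not $e/2$. Your instinct to distrust ``the precise constant'' was therefore sound, and your stationarity argument repairs the statement where the paper's proof does not: the expected number of entries into $\ms A$ during $[0,b]$ under $\bb P_\pi$ is exactly $b\sum_{\xi\notin\ms A}\pi(\xi)R(\xi,\ms A)$, and the chain $\pi(\eta)\,\bb P_\eta[H_{\ms A}\le b]\le\bb P_\pi[N_b\ge 1]\le\bb E_\pi[N_b]$ yields the boundary bound with constant $1<e/2$, so the end-to-end conclusion of the corollary holds with room to spare. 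In terms of trade-offs: the paper's argument stays entirely within the potential-theoretic framework of Section \ref{sec04}, whereas your ergodic-flux argument is elementary, needs neither the enlargement nor reversibility, and gives a sharper constant. One small caveat: your flux bound is naturally indexed by all $\xi\notin\ms A$ with $R(\xi,\ms A)>0$, which agrees with the set $\partial_+\ms A$ as defined in the paper (through jumps \emph{out of} $\ms A$) in the reversible case, but could differ for a general non-reversible chain; this is a quirk of the statement's notation rather than a gap in your proof.
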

\begin{proof}
In view of \eqref{l3-20b}, the first result of the preceding lemma gives
\[
\bb P_\eta \big[ H_{\ms A} \le b \big] \;\le\; e^{\gamma b}\frac{\Cap (\eta, \ms A)}{\pi(\eta)\gamma+\Cap (\eta, \ms A)}.
\]
It suffices now to pick $\gamma=b^{-1}$. For the second inequality note that 
\[
\Cap (\eta, \ms A)\le \Cap (E\setminus\ms A, \ms A)=\frac{1}{2}\sum_{\xi\in \partial_+\ms A} \pi(\xi)\, R(\xi, \ms A).
\]
\end{proof}

\noindent{\bf Acknowledgements}. C. Landim has been partially
supported by FAPERJ CNE E-26/201.207/2014, by CNPq Bolsa de
Produtividade em Pesquisa PQ 303538/2014-7, and by ANR-15-CE40-0020-01
LSD of the French National Research Agency. 

\bibliographystyle{plain}
\bibliography{cfdd}

\end{document}